\numberwithin{equation}{section}
\theoremstyle{plain}
\newtheorem{thm}{Theorem}[section]
\newtheorem{cor}{Corollary}[section]
\newtheorem{lem}{Lemma}[section]
\theoremstyle{remark}
\newtheorem{rem}{Remark}[section]
\DeclareMathOperator{\td}{d\!}
\DeclareMathOperator{\ti}{i}
\DeclareMathOperator{\te}{e}
\DeclareMathOperator{\Arg}{Arg}
\DeclareMathOperator{\Ln}{Ln}
\DeclareMathOperator{\Arctan}{Arctan}
\DeclareMathOperator{\arctanh}{arctanh}
\DeclareMathOperator{\bell}{B}
\begin{document}

\title[Power series expansion of Wilf function]
{Power series expansion of Wilf function}

\author[F. Qi]{Feng Qi}
\address{School of Mathematics and Informatics, Henan Polytechnic University, Jiaozuo, Henan, 454010, China\newline\indent
School of Mathematics and Physics, Hulunbuir University, Hulunbuir, Inner Mongolia, 021008, China\newline\indent
Independent researcher, University Village, Dallas, TX 75252-8024, USA}
\email{\href{mailto: F. Qi <qifeng618@gmail.com>}{qifeng618@gmail.com}}
\urladdr{\url{https://orcid.org/0000-0001-6239-2968}}

\begin{abstract}
In the research, with aid of the Fa\`a di Bruno formula, be virtue of several identities for the Bell polynomials of the second kind, with help of two combinatorial identities, by means of the (logarithmically) complete monotonicity of generating functions of several integer sequences, and in light of the Wronski theorem, the author
\begin{enumerate}
\item
establishes the Taylor power series expansions of several functions involving the inverse (hyperbolic) tangent function;
\item
finds out the Maclaurin power series expansion of the Wilf function, which is a composite of the inverse tangent, square root, and exponential functions;
\item
expresses the coefficients in the Maclaurin power series expansion of the Wilf function in terms of the Stirling numbers of the second kind;
\item
analyzes some properties, including generating functions, limits, positivity, monotonicity, and logarithmic convexity, of the coefficients in the Maclaurin power series expansion of the Wilf function;
\item
derives a closed-form formula for a sequence of special values of the Gauss hypergeometric function;
\item
discovers a closed-form formula for a sequence of special values of the Bell polynomials of the second kind;
\item
presents several infinite series representations of the circular constant and other sequences;
\item
recovers an asymptotic rational approximation to the circular constant;
\item
and connects several integer sequences by determinants.
\end{enumerate}
\end{abstract}

\keywords{Maclaurin power series expansion; Taylor power series expansion; Wilf function; inverse tangent function; square root; exponential function; inverse hyperbolic tangent function; coefficient; Fa\`a di Bruno formula; Bell polynomial of the second kind; Stirling number of the second kind; combinatorial identity; special value; closed-form formula; generating function; Gauss hypergeometric function; logarithmically complete monotonicity; positivity; logarithmic convexity; rational approximation; Wronski theorem}

\subjclass{Primary 05A15; Secondary 05A19, 11B83, 33C05, 41A58}

\thanks{This paper was typeset using \AmS-\LaTeX}

\maketitle
\tableofcontents

\section{Motivations}

Throughout this paper, we use the following notations:
\begin{gather*}
\mathbb{N}=\{1,2,\dotsc\}, \quad \mathbb{N}_0=\{0,1,2,\dotsc\},\quad \mathbb{Z}=\{0,\pm1,\pm2,\dotsc\}, \\
\mathbb{R}=(-\infty,\infty), \quad \mathbb{C}=\bigl\{x+y\ti: x,y\in\mathbb{R}, \ti=\sqrt{-1}\,\bigr\}.
\end{gather*}
\par
Let
\begin{equation}\label{Wilf-F-arctan}
W(z)=\frac{\arctan\sqrt{2\te^{-z}-1}\,}{\sqrt{2\te^{-z}-1}\,}.
\end{equation}
In 2009, Herbert S. Wilf (1931--2012, University of Pennsylvania, USA) proposed the following problem:
\begin{quote}
If
\begin{equation}\label{Wilf-a(n)-dfn}
W(z)=\sum_{n=0}^{\infty}a_nz^n,
\end{equation}
find the first term of the asymptotic behaviour of the $a_n$'s.
\end{quote}
\par
In the conference paper~\cite{Ward-2010-Wilf} in 2010, Mark Daniel Ward (Purdue University, USA) considered Wilf's problem and obtained the following conclusions:
\begin{enumerate}
\item
The function $\sqrt{2\te^{-z}-1}\,$ is analytic on $\mathbb{C}\setminus\bigcup_{j\in\mathbb{Z}}\mathcal{L}_j$, where
\begin{equation*}
\mathcal{L}_j=\{z=x+2j\pi\ti:x\in\mathbb{R},x\ge\ln2\}.
\end{equation*}
\item
The Wilf function $W(z)$ defined in~\eqref{Wilf-F-arctan} is analytic on $\mathbb{C}\setminus\bigcup_{j\in\mathbb{Z}}\mathcal{L}_j$. Moreover, an appropriate choice for the branch of the $\arctan$ function allows $W(z)$ to have an analytic continuation to $\bigcup_{j\in2\mathbb{Z}}\mathcal{L}_j$; however, such an analytic continuation will leave $W(z)$ discontinuous on $\bigcup_{j\in2\mathbb{Z}+1}\mathcal{L}_j$.
\item
The coefficients $a_n$ for $n\in\mathbb{N}_0$ in the Maclaurin power series expansion~\eqref{Wilf-a(n)-dfn} satisfy $a_n=\frac{h(n)}{R^n}$, where $R=|2\pi\ti+\ln2|$ and $h(n)$ is a subexponential factor, i.e., $\limsup_{n\to\infty}|h(n)|^{1/n}=1$.
\item
For each positive integer $N$,
\begin{align*}
a_n&=-2\pi\sum_{j=0}^{N-1}d_j\frac{\cos\bigl[\bigl(n-j+\frac{1}{2}\bigr)\arctan\frac{2\pi}{\ln2}\bigr]} {\Gamma\bigl(\frac{1}{2}-j\bigr)R^{n-j+1/2}n^{j+1/2}} \Biggl[1+\sum_{k=1}^{N-j-1}\frac{e_k\bigl(\frac{1}{2}-j\bigr)}{n^k}\Biggr]\\
&\quad +O\biggl(\frac{1}{R^nn^{N+1/2}}\biggr),
\end{align*}
where
\begin{gather*}
d_j=\bigl[z^{j-1/2}\bigr]\frac1{(\te^z-1)^{1/2}},\quad
\lambda_{k,\ell}=\bigl[v^kt^\ell\bigr]\frac{\te^t}{(1+vt)^{1+1/v}},\\
e_k(\alpha)=\sum_{\ell=k}^{2k}(-1)^\ell\lambda_{k,\ell}\prod_{q=1}^{\ell}(\alpha-q),
\end{gather*}
and $\Gamma$ denotes the classical Euler gamma function which can be defined~\cite[Chapter~3]{Temme-96-book} by
\begin{equation*}
\Gamma(z)=\lim_{n\to\infty}\frac{n!n^z}{\prod_{k=0}^n(z+k)}, \quad z\in\mathbb{C}\setminus\{0,-1,-2,\dotsc\}.
\end{equation*}
\end{enumerate}
\par
Tian-Xiao He (Illinois Wesleyan University, USA) reviewed the paper~\cite{Ward-2010-Wilf} in \href{https://mathscinet.ams.org/mathscinet-getitem?mr=2735366}{MR2735366} as follows:
\begin{quote}
The coefficient $a_n$ can be written as $a_n=b_n\pi-c_n$, where $b_n$ and $c_n$ are nonnegative rational numbers. In fact, $\lim_{n\to\infty}\frac{a_n}{b_n}=0$, and the rational numbers of the form $\frac{c_n}{b_n}$ provide approximations to $\pi$. A complete expansion of the coefficients $a_n$ is found by the author. It is probably the best that can be done, given the oscillatory nature of the terms.
\end{quote}
Herbert S. Wilf commented on the paper~\cite{Ward-2010-Wilf} on 13 December 2010 as follows:
\begin{quote}
Mark Ward has found a complete expansion of these coefficients. It's not quite an asymptotic series in the usual sense, but it is probably the best that can be done, given the oscillatory nature of the terms.
\end{quote}
\par
In this research, we will establish a closed-form formula involving the Stirling numbers of the second kind $S(n,k)$ for the coefficients $a_n$ in~\eqref{Wilf-a(n)-dfn}. Using this closed-form formula, we will express $a_n$ as the form $b_n\pi-c_n$ in terms of closed-form formulas for $b_n$ and $c_n$.
\par
To smoothly attain our aims above, we will prepare some knowledge, tools, and lemmas in Section~\ref{preparation-sec}.
\par
For establishing a closed-form formula of the coefficients $a_n$, we not only need to derive the Taylor or the Maclaurin power series expansions of the functions
\begin{equation}\label{four-arctangent}
\arctan z,\quad \sqrt{2\te^{-z}-1}\,,\quad \frac{\arctan\sqrt{z}\,}{\sqrt{z}\,},\quad \frac{\arctan z}{z},
\end{equation}
but also need to discover a closed-form formula of the Bell polynomials of the second kind
\begin{equation}\label{Bell-Poly-Stirling-invol}
\bell_{n,k}\Biggl(1, 0, 1, 3, 16, 105, \dotsc, (-1)^{n-k}\sum_{\ell=0}^{n-k+1} (-1)^{\ell-1}S(n-k+1,\ell)(2\ell-3)!!\Biggr)
\end{equation}
for $n\ge k\in\mathbb{N}_0$, where $S(n,k)$ denotes the Stirling numbers of the second kind.
We will do these works in Section~\ref{arctan-taylor-maclaurin-sec}.
\par
In Section~\ref{sec-Maclaurin-Wilf-Ser}, by means of the Taylor power series expansion derived in Section~\ref{arctan-taylor-maclaurin-sec} of the third function in~\eqref{four-arctangent}, we will present two forms for the Maclaurin power series expansion of the Wilf function $W(z)$ on the open disc $|z|<\ln2$: the first form is expressed in terms of the Gauss hypergeometric function
\begin{equation}\label{2F1=nnn-z=-1}
{}_2F_1\biggl(n+\frac{1}{2},n+1;n+\frac{3}{2};-1\biggr), \quad n\in\mathbb{N}_0,
\end{equation}
the second one is represented in terms of a closed-form expression of the coefficients $a_n$. Consequently, we deduce a closed-form formula of the Gauss hypergeometric function in~\eqref{2F1=nnn-z=-1}.
\par
In Section~\ref{ser-arctanh-sec}, in view of related results obtained in Sections~\ref{arctan-taylor-maclaurin-sec} and~\ref{sec-Maclaurin-Wilf-Ser}, in light of a relation between $\arctan z$ and $\arctanh z$, we will establish the Maclaurin power series expansions of four functions
\begin{equation*}
\arctanh z,\quad \frac{\arctanh\sqrt{z}\,}{\sqrt{z}\,},\quad \frac{\arctanh z}{z},\quad \frac{\arctanh\sqrt{1-2\te^{-z}}\,}{\sqrt{1-2\te^{-z}}\,}.
\end{equation*}
\par
In Section~\ref{coefficients-analysis-sec}, we will go back to the Maclaurin power series expansion
\begin{equation}\label{Wilf-F-Ser-Split2Part}
W(z)=\sum_{n=0}^{\infty}a_nz^n
=\sum_{n=0}^{\infty}(b_n\pi-c_n)z^n, \quad |z|<\ln2
\end{equation}
and analyze many properties of the sequences $a_n$, $b_n$, and $c_n$.

\section{Preliminaries and lemmas}\label{preparation-sec}

In the theory of complex functions, the inverse tangent function $\Arctan z$ is defined by
\begin{equation*}
\Arctan z=\frac1{2\ti}\Ln\frac{1+\ti z}{1-\ti z}=\frac1{2\ti}\Ln\frac{\ti-z}{\ti+z}
\end{equation*}
on the punctured complex plane $\mathbb{C}\setminus\{\pm\ti\}$ and its principal branch $\arctan z$ such that $\arctan 0=0$ is taken as
\begin{equation*}
\arctan z=\frac{\ti}2[\ln(1-\ti z)-\ln(1+\ti z)]
\end{equation*}
on the cut complex plane
\begin{equation*}
\mathcal{A}=\mathbb{C}\setminus\{(-\ti\infty,-\ti],[\ti,\ti\infty)\},
\end{equation*}
where $\ln z=\ln|z|+\ti\arg z$ for $z\in\mathbb{C}\setminus(-\infty,0]$ stands for the principal branch of the logarithm $\Ln z$ and $\arg z\in(-\pi,\pi]$ represents the principal value of the argument $\Arg z$.
On the cut complex plane $\mathcal{A}$, the range of the principal branch $\arctan x$ for real numbers $x\in\mathbb{R}$ is determined as $\bigl(-\frac\pi2,\frac\pi2\bigr)$.
The principal branch $\arctan z$ has a unique zero $z=0$ on $\mathcal{A}$.
\par
The power function $f(z)=\sqrt{z}\,$ is defined on the whole complex plane $\mathbb{C}$. In polar coordinates, we can write $z=r\te^{\ti\theta}$ for $r\ge0$ and $\theta\in(-\pi,\pi]$. Then $f(z)=\sqrt{r}\,\te^{\ti(\theta+2k\pi)/2}$ for $k=0,1$. On the cut complex plane $\mathbb{C}\setminus(-\infty,0)$, the principal branch of $f(z)$ such that $f(1)=1$ is $\sqrt{r}\,\te^{\ti\theta/2}$ for $r\ge0$ and $\theta\in(-\pi,\pi)$. The range of the principal branch of $f(z)=\sqrt{z}\,$ is the right half complex plane plus the origin point. This range is a subset of $\mathcal{A}$. Therefore, the principal branch of the third function in~\eqref{four-arctangent} is defined on $\mathbb{C}\setminus(-\infty,0)$.
\par
The function $2\te^{-z}-1$ is analytic on the complex plane $\mathbb{C}$ and its range is $\mathbb{C}\setminus\{-1\}$.
\par
It is obvious that the Wilf function $W(z)$ can be regarded as the composite of three complex functions
\begin{equation*}
h(z)=\frac{\arctan z}{z},\quad f(z)=\sqrt{z}\,,\quad g(z)=2\te^{-z}-1.
\end{equation*}
When considering the composite of three principal branches of these complex functions $h(z)$, $f(z)$, and $g(z)$, the Wilf function $W(z)$ has no definition on points such that
\begin{equation*}
2\te^{-z}-1=2\te^{-x}\cos y-1-(2\te^{-x}\sin y)\ti\in(-\infty,0),
\end{equation*}
which is equivalent to
\begin{equation*}
2\te^{-x}\cos y-1<0 \quad\text{and}\quad 2\te^{-x}\sin y=0.
\end{equation*}
These points constitute infinitely many parallel and horizontal rays
\begin{equation*}
\{z=x+y\ti\in\mathbb{C}:x>\ln2, y=2k\pi, k\in\mathbb{Z}\}
\end{equation*}
and infinitely many parallel, horizontal, and straight lines
\begin{equation*}
\{z=x+y\ti\in\mathbb{C}:x\in\mathbb{R}, y=(2k+1)\pi,k\in\mathbb{Z}\}
\end{equation*}
on the complex plane $\mathbb{C}$. In a word, the Wilf function $W(z)$ is defined on the region
\begin{align*}
\mathbb{C}&\setminus\{z=x+y\ti\in\mathbb{C}:x>\ln2, y=2k\pi, k\in\mathbb{Z}\}\\
&\setminus\{z=x+y\ti\in\mathbb{C}:x\in\mathbb{R}, y=(2k+1)\pi,k\in\mathbb{Z}\}.
\end{align*}
Consequently, on the open disc $|z|<\ln2$, the Wilf function $W(z)$ is analytic.
\par
By choosing an alternative branch of $\arctan z$, Ward proved in~\cite[Lemma~2.2]{Ward-2010-Wilf} that the maximum radius around the origin for which the Wilf function $W(z)$ has an analytic extension is
\begin{equation*}
|\ln2+2\pi\ti|=\sqrt{(\ln2)^2+4\pi^2}\,>\ln2.
\end{equation*}
In what follows, we will proceed by assuming $|z|<\ln2$, if any.
\par
For a complex constant $\alpha\in\mathbb{C}$, the falling and rising factorials of $\alpha$ are defined~\cite[p.~618, 26.1.1]{NIST-HB-2010} respectively by
\begin{equation*}
\langle\alpha\rangle_n=\prod_{k=0}^{n-1}(\alpha-k)
=
\begin{cases}
\alpha(\alpha-1)\dotsm(\alpha-n+1), & n\in\mathbb{N};\\
1, & n=0
\end{cases}
\end{equation*}
and
\begin{equation*}
(\alpha)_n=\prod_{k=0}^{n-1}(\alpha+k)
=
\begin{cases}
\alpha(\alpha+1)\dotsm(\alpha+n-1), & n\in\mathbb{N};\\
1, & n=0.
\end{cases}
\end{equation*}
The double factorial of negative odd integers $-(2k+1)$ for $k\in\mathbb{N}_0$ is defined by
\begin{equation}\label{double-factorial-neg-odd-integer}
[-(2k+1)]!!=\frac{(-1)^k}{(2k-1)!!}=(-1)^k\frac{(2k)!!}{(2k)!}.
\end{equation}
\par
For $\alpha,\beta\in\mathbb{C}$ and $\gamma\in\mathbb{C}\setminus\{0,-1,-2,\dotsc\}$, the Gauss hypergeometric function ${}_2F_1(\alpha,\beta;\gamma;z)$ is defined by the series
\begin{equation}\label{Gauss-HF-dfn}
{}_2F_1(\alpha,\beta;\gamma;z)=\sum_{n=0}^{\infty}\frac{(\alpha)_n(\beta)_n}{(\gamma)_n}\frac{z^n}{n!}.
\end{equation}
If $\alpha$ or $\beta$ is zero or a negative integer, the series in~\eqref{Gauss-HF-dfn} terminates after a finite number of terms, and the Gauss hypergeometric function ${}_2F_1(\alpha,\beta;\gamma;z)$ is a polynomial in $z$. When $a,b\not\in\{0,-1,-2,\dotsc\}$, the infinite series in~\eqref{Gauss-HF-dfn} absolutely and uniformly converges on the open unit disc $|z|<1$. If $\Re(\alpha+\beta-\gamma)<0$, the series in~\eqref{Gauss-HF-dfn} absolutely converges over the unit circle $|z|=1$. If $0\le\Re(\alpha+\beta-\gamma)<1$, the series in~\eqref{Gauss-HF-dfn} conditionally converges at all points of the unit circle $|z|=1$ except $z=1$. If $\Re(\alpha+\beta-\gamma)\ge1$, the series in~\eqref{Gauss-HF-dfn} diverges over the circle $|z|=1$. There exists an analytic continuation of the Gauss hypergeometric function ${}_2F_1(\alpha,\beta;\gamma;z)$ to the region $\{z\in\mathbb{C}:|z|>1\}\setminus(1,\infty)$. The Gauss hypergeometric function ${}_2F_1(\alpha,\beta;\gamma;z)$ is a univalent analytic function on $\mathbb{C}\setminus(1,\infty)$. 
For details and more information, please refer to~\cite[pp.~64--66, Section~3.7]{Luke-Vol1-1969} and~\cite[Chapter~5]{Temme-96-book}.
\par
The Stirling numbers of the second kind $S(n,k)$ for $n\ge k\in\mathbb{N}_0$ can be analytically generated~\cite[p.~51]{Comtet-Combinatorics-74} by
\begin{equation}\label{2stirl-gen-f}
\frac{(\te^x-1)^k}{k!}=\sum_{n=k}^{\infty} S(n,k)\frac{x^n}{n!}
\end{equation}
and can be explicitly computed~\cite[p.~204, Theorem~A]{Comtet-Combinatorics-74} by
\begin{equation}\label{Stirling-Number-dfn}
S(n,k)=
\begin{dcases}
\frac{(-1)^{k}}{k!}\sum_{\ell=0}^k(-1)^{\ell}\binom{k}{\ell}\ell^{n}, & n>k\in\mathbb{N}_0;\\
1, & n=k\in\mathbb{N}_0.
\end{dcases}
\end{equation}
By the way, we recommend the papers~\cite{MIA-4666.tex, 2900-Qi.tex} for several new results of $S(n,k)$.
\par
The Bell polynomials of the second kind $\bell_{n,k}(x_1,x_2,\dotsc,x_{n-k+1})$ for $n\ge k\in\mathbb{N}_0$ can be defined~\cite[p.~134, Theorem~A]{Comtet-Combinatorics-74} by
\begin{equation*}
\bell_{n,k}(x_1,x_2,\dotsc,x_{n-k+1})=\sum_{\substack{1\le i\le n-k+1\\ \ell_i\in\{0\}\cup\mathbb{N}\\ \sum_{i=1}^{n-k+1}i\ell_i=n\\
\sum_{i=1}^{n-k+1}\ell_i=k}}\frac{n!}{\prod_{i=1}^{n-k+1}\ell_i!} \prod_{i=1}^{n-k+1}\biggl(\frac{x_i}{i!}\biggr)^{\ell_i}.
\end{equation*}
The Fa\`a di Bruno formula can be described~\cite[p.~139, Theorem~C]{Comtet-Combinatorics-74} in terms of the Bell polynomials of the second kind $\bell_{n,k}(x_1,x_2,\dotsc,x_{n-k+1})$ by
\begin{equation}\label{Bruno-Bell-Polynomial}
\frac{\operatorname{d}^n}{\td t^n}f\circ h(t)
=\sum_{k=0}^nf^{(k)}(h(t)) \bell_{n,k}\bigl(h'(t),h''(t),\dotsc,h^{(n-k+1)}(t)\bigr), \quad n\in\mathbb{N}_0,
\end{equation}
where $f$ and $h$ are $(n+1)$-time differentiable functions and $f\circ h$ denotes the composite of $f$ and $h$.
\par
The Bell polynomials of the second kind $\bell_{n,k}(x_1,x_2,\dotsc,x_{n-k+1})$ satisfy the following identities and formulas:
\begin{align}\label{Bell(n-k)}
\bell_{n,k}\bigl(\alpha\beta x_1,\alpha \beta^2x_2,\dotsc,\alpha \beta^{n-k+1}x_{n-k+1}\bigr)
&=\alpha^k\beta^n\bell_{n,k}(x_1,x_2,\dotsc,x_{n-k+1}),\\
\label{Bell-stirling}
\bell_{n,k}(1,1,\dotsc,1)
&=S(n,k),\\
\label{Bell-x-1-0-eq}
\bell_{n,k}(\alpha,1,0,\dotsc,0)
&=\frac{(n-k)!}{2^{n-k}}\binom{n}{k}\binom{k}{n-k}\alpha^{2k-n},\\
\frac1{k!}\Biggl(\sum_{m=1}^{\infty} x_m\frac{t^m}{m!}\Biggr)^k
&=\sum_{n=k}^{\infty} \bell_{n,k}(x_1,x_2,\dotsc,x_{n-k+1})\frac{t^n}{n!},\label{113-final-formula}
\end{align}
and
\begin{multline}\label{Bell-Polyn-Half}
\bell_{n,k}\biggl(\biggl\langle\frac12\biggr\rangle_1, \biggl\langle\frac12\biggr\rangle_2,\dotsc, \biggl\langle\frac12\biggr\rangle_{n-k+1}\biggr)\\
=(-1)^{n+k}\frac{(2n-2k-1)!!}{2^n} \binom{2n-k-1}{k-1}
\end{multline}
for $n\ge k\in\mathbb{N}_0$ and $\alpha,\beta\in\mathbb{C}$; see~\cite[p.~412]{Charalambides-book-2002}, \cite[pp.~133 and 135]{Comtet-Combinatorics-74}, \cite[Theorem~4.1]{Spec-Bell2Euler-S.tex}, and~\cite[p.~169, (3.6)]{CDM-68111.tex}, respectively. These identities and formulas can also be found in the review and survey article~\cite{Bell-value-elem-funct.tex}.

\begin{lem}\label{sum-central-binom-lem-ell}
For all $\ell,n\in\mathbb{Z}$, the combinatorial identity
\begin{equation}\label{sum-central-binom-ell-eq}
\sum_{k=\ell}^{n}\binom{2n-k-1}{n-1}2^kk
=\binom{2n-\ell}{n}2^\ell n
\end{equation}
is valid, where an empty sum is understood to be $0$.
\end{lem}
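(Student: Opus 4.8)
The plan is to recognize the right-hand side as the value of a telescoping antiderivative, which dispenses with a separate base case. Writing
\begin{equation*}
R(k)=\binom{2n-k}{n}2^kn,
\end{equation*}
the asserted total $\binom{2n-\ell}{n}2^\ell n$ is precisely $R(\ell)$, while $R(n+1)=\binom{n-1}{n}2^{n+1}n=0$. The whole identity will follow from the single-step relation
\begin{equation*}
R(k)-R(k+1)=\binom{2n-k-1}{n-1}2^kk,
\end{equation*}
which says that the summand in~\eqref{sum-central-binom-ell-eq} is the backward difference of $R$; summing this over $\ell\le k\le n$ telescopes to $R(\ell)-R(n+1)=R(\ell)$, exactly the right-hand side of~\eqref{sum-central-binom-ell-eq}. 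Equivalently, one may package the same computation as a downward induction on $\ell$ starting from the trivial case $\ell=n$.

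The core is therefore to verify the single-step relation. Expanding $R(k+1)=\binom{2n-k-1}{n}2^{k+1}n$ and factoring out $2^kn$, Pascal's rule $\binom{2n-k}{n}=\binom{2n-k-1}{n}+\binom{2n-k-1}{n-1}$ gives
\begin{equation*}
R(k)-R(k+1)=2^kn\biggl[\binom{2n-k-1}{n-1}-\binom{2n-k-1}{n}\biggr].
\end{equation*}
The ratio of two binomial coefficients with the same upper index and adjacent lower indices, namely $\binom{2n-k-1}{n}=\frac{n-k}{n}\binom{2n-k-1}{n-1}$, which is immediate from the falling-factorial expression of the binomial coefficient, then reduces the bracket to $\frac{k}{n}\binom{2n-k-1}{n-1}$, and hence $R(k)-R(k+1)=2^kk\binom{2n-k-1}{n-1}$, as needed.

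With the single-step relation in hand the identity is immediate for the principal range $\ell\le n$. The empty-sum clause is consistent: when $\ell>n$ the left-hand side is $0$, and on the right $\binom{2n-\ell}{n}$ vanishes once its upper index is a nonnegative integer below $n$. The one point demanding care—and the only real obstacle—is bookkeeping with the extended binomial coefficient $\binom{z}{w}$ defined in Section~\ref{preparation-sec}: one must check that Pascal's rule and the adjacency ratio continue to hold for the integer arguments that actually occur, so that $R(k)-R(k+1)=\binom{2n-k-1}{n-1}2^kk$ is a genuine identity over the whole summation range rather than merely in the classical regime $0\le k\le n$.
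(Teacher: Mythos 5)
Your proposal is correct and is essentially the paper's own argument in disguise: the paper proves the identity by descending induction on $\ell$ from the trivial base case $\ell=n$, and its inductive step $\binom{2n-\ell}{n}2^\ell n+\binom{2n-\ell}{n-1}2^{\ell-1}(\ell-1)=\binom{2n-\ell+1}{n}2^{\ell-1}n$ is exactly your one-step relation $R(k)-R(k+1)=\binom{2n-k-1}{n-1}2^kk$ at $k=\ell-1$, which you have merely repackaged as a telescoping sum (as you yourself observe). Your explicit verification of the single step via Pascal's rule and the adjacency ratio, and your caution about the extended binomial coefficients outside the principal range $\ell\le n$, make the write-up slightly more careful than the paper's, but the method is the same.
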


\begin{proof}
We now prove the identity~\eqref{sum-central-binom-ell-eq} by the descending induction mentioned in~\cite[p.~88]{Serge-Lang-1996Group}.
\par
It is trivial that, when $n-\ell=0$, the identity~\eqref{sum-central-binom-ell-eq} is valid.
\par
Assume that the identity~\eqref{sum-central-binom-ell-eq} is valid for some $n-\ell>0$. Then it is easy to see that
\begin{align*}
\sum_{k=\ell-1}^{n}\binom{2n-k-1}{n-1}2^kk
&=\sum_{k=\ell}^{n}\binom{2n-k-1}{n-1}2^kk+\binom{2n-\ell}{n-1}2^{\ell-1}(\ell-1)\\
&=\binom{2n-\ell}{n}2^\ell n+\binom{2n-\ell}{n-1}2^{\ell-1}(\ell-1)\\
&=\binom{2n-\ell+1}{n}2^{\ell-1}n.
\end{align*}
Inductively, we conclude that the identity~\eqref{sum-central-binom-ell-eq} is valid for all $n\ge\ell$. The proof of Lemma~\ref{sum-central-binom-lem-ell} is complete.
\end{proof}

\begin{rem}
The idea of the descending induction utilized in the proof of Lemma~\ref{sum-central-binom-lem-ell} comes from Darij Grinberg (Drexel University, Germany) at the site \url{https://mathoverflow.net/q/402822/}. The identity~\eqref{sum-central-binom-ell-eq} in Lemma~\ref{sum-central-binom-lem-ell} has been posted at the sites \url{https://mathoverflow.net/a/402833/} and \url{https://mathoverflow.net/a/403278/}.
\end{rem}

\begin{lem}\label{Stack-Quest-Answ-lem}
Let
\begin{equation}\label{P(n-k)-dfn-Notation}
P(n,k)=k!(2n-2k-1)!! \binom{2n-k-1}{k-1}, \quad n\ge k\in\mathbb{N}_0.
\end{equation}
Then
\begin{equation}\label{Stack-Quest-Answ-Eq}
\sum_{m=0}^{k}(-1)^{m} \binom{k}{m}\binom{m/2}{\ell}
=
\begin{dcases}
0, & k>\ell\in\mathbb{N}_0;\\
\frac{(-1)^{\ell}}{(2\ell)!!}P(\ell,k), & \ell\ge k\in\mathbb{N}_0.
\end{dcases}
\end{equation}
\end{lem}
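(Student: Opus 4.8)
The plan is to reinterpret the alternating sum on the left-hand side of~\eqref{Stack-Quest-Answ-Eq} as a single coefficient-extraction problem and then resolve it by Lagrange inversion. First I would introduce a formal variable $t$ and use the binomial series $(1+t)^{m/2}=\sum_{\ell=0}^{+\infty}\binom{m/2}{\ell}t^\ell$ together with the finite binomial theorem to obtain
\begin{equation*}
\sum_{m=0}^{k}(-1)^m\binom{k}{m}(1+t)^{m/2}=\bigl(1-\sqrt{1+t}\,\bigr)^k.
\end{equation*}
Comparing the coefficients of $t^\ell$ on both sides shows that the sum in question equals $[t^\ell]\bigl(1-\sqrt{1+t}\,\bigr)^k$, the coefficient of $t^\ell$ in the Maclaurin expansion of $\bigl(1-\sqrt{1+t}\,\bigr)^k$.

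Next I would dispose of the first branch. Since $1-\sqrt{1+t}\,=-\frac{t}{2}+O(t^2)$, the function $\bigl(1-\sqrt{1+t}\,\bigr)^k$ vanishes to order exactly $k$ at $t=0$, so its coefficient of $t^\ell$ is $0$ whenever $\ell<k$; this is precisely the case $k>\ell\in\mathbb{N}_0$ of~\eqref{Stack-Quest-Answ-Eq}.

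For the main case $\ell\ge k$, I would extract $[t^\ell]w^k$ with $w=1-\sqrt{1+t}\,$. Inverting gives $\sqrt{1+t}\,=1-w$, hence $t=w^2-2w=w(w-2)$ and therefore $w=t\,\Phi(w)$ with $\Phi(w)=\frac1{w-2}$ and $\Phi(0)=-\frac12\neq0$. The Lagrange--B\"urmann formula then yields
\begin{equation*}
[t^\ell]w^k=\frac{k}{\ell}[w^{\ell-k}]\Phi(w)^\ell
=\frac{(-1)^\ell k}{\ell\,2^\ell}[w^{\ell-k}]\Bigl(1-\frac{w}{2}\Bigr)^{-\ell},
\end{equation*}
and expanding $\bigl(1-\frac{w}{2}\bigr)^{-\ell}$ by the negative binomial series produces the closed form $\frac{(-1)^\ell k}{\ell\,2^{2\ell-k}}\binom{2\ell-k-1}{\ell-k}$.

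Finally, I would reconcile this with the right-hand side of~\eqref{Stack-Quest-Answ-Eq}. Using $(2\ell)!!=2^\ell\ell!$, the conversion $[2(\ell-k)-1]!!=\frac{[2(\ell-k)]!}{2^{\ell-k}(\ell-k)!}$, and the symmetry $\binom{2\ell-k-1}{2(\ell-k)}=\binom{2\ell-k-1}{k-1}$, I would rewrite $\frac{(-1)^\ell}{(2\ell)!!}P(\ell,k)$ and check that, up to the sign $(-1)^\ell$, both expressions collapse to $\frac{k\,(2\ell-k-1)!}{2^{2\ell-k}\,\ell!\,(\ell-k)!}$. I expect this last bookkeeping of double factorials and binomial symmetries to be the only delicate step; the generating-function reformulation and the Lagrange inversion are routine once the substitution $w=1-\sqrt{1+t}\,$ is chosen. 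As a normalization check, at $\ell=k$ both sides reduce to $\frac{(-1)^k}{2^k}$, which I would verify to guard against sign or indexing errors.
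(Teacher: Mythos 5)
Your proof is correct, but it follows a genuinely different route from the paper's. You encode the sum as the coefficient extraction $[t^\ell]\bigl(1-\sqrt{1+t}\,\bigr)^k$ and resolve it by Lagrange--B\"urmann inversion with $t=w(w-2)$; the computation checks out (indeed $[t^\ell]w^k=\frac{(-1)^\ell k}{\ell\,2^{2\ell-k}}\binom{2\ell-k-1}{\ell-k}$, which equals $\frac{(-1)^\ell k\,(2\ell-k-1)!}{2^{2\ell-k}\,\ell!\,(\ell-k)!}=\frac{(-1)^\ell}{(2\ell)!!}P(\ell,k)$, with the conventions $(-1)!!=1$ and the vanishing binomial handling the edge cases $k=0$ and $\ell=k$), and the vanishing branch $k>\ell$ falls out for free from the order-$k$ zero of $1-\sqrt{1+t}\,$ at $t=0$. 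The paper instead works arithmetically: it converts $\binom{m/2}{\ell}$ via Stirling numbers of the first kind using~\eqref{(12.1)-Quaintance}, evaluates $\sum_{m=0}^{k}(-1)^m\binom{k}{m}m^q=(-1)^kk!\,S(q,k)$ by~\eqref{Stirling-Number-dfn} (which gives the $k>\ell$ case from $S(q,k)=0$ for $q<k$), and then obtains the key convolution identity~\eqref{2Stirl-sum-prod-ID}, $\sum_{m=k}^{n}s(n,m)2^{-m}S(m,k)=(-1)^{n+k}\frac{[2(n-k)-1]!!}{2^n}\binom{2n-k-1}{2(n-k)}$, by evaluating the Bell polynomial $\bell_{n,k}\bigl(\bigl\langle\frac12\bigr\rangle_1,\dotsc\bigr)$ in two ways, via~\eqref{Bell-fall-Eq} and via~\eqref{Bell-Polyn-Half}. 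Your argument is more self-contained and elementary, needing only the binomial theorem and standard Lagrange inversion rather than the external Bell-polynomial identities; the paper's heavier route buys the Stirling-number identity~\eqref{2Stirl-sum-prod-ID} as a byproduct and stays inside the Bell-polynomial framework that the rest of the paper (e.g., Theorem~\ref{sqrt-g(x)=0-bell-thm}) relies on. One small presentational point: your final ``bookkeeping'' step is stated as a plan rather than carried out, but as verified above it is a two-line double-factorial computation, and your normalization check at $\ell=k$ correctly guards the sign.
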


\begin{proof}
In~\cite[p.~165]{Quaintance-Gould-2016-B}, the equation~(12.1) reads that
\begin{equation}\label{(12.1)-Quaintance}
n!\binom{z}{n}=\sum_{\ell=0}^{n}s(n,\ell)z^\ell, \quad n\in\mathbb{N}_0, \quad z\in\mathbb{C}.
\end{equation}
Making use of the relation~\ref{(12.1)-Quaintance} and utilizing the explicit formula~\eqref{Stirling-Number-dfn}, we find
\begin{equation}
\begin{aligned}\label{Stack-binom-Qi-Ask}
\sum_{m=0}^{k}(-1)^{m} \binom{k}{m}\binom{m/2}{\ell}
&=\frac1{\ell!}\sum_{m=0}^{k}(-1)^{m} \binom{k}{m}\biggl[\ell!\binom{m/2}{\ell}\biggr]\\
&=\frac1{\ell!}\sum_{m=0}^{k}(-1)^{m} \binom{k}{m}\sum_{q=0}^{\ell}s(\ell,q)\biggl(\frac{m}{2}\biggr)^q\\
&=\frac1{\ell!}\sum_{q=0}^{\ell}\frac{s(\ell,q)}{2^q} \sum_{m=0}^{k}(-1)^{m} \binom{k}{m}m^q\\
&=(-1)^k\frac{k!}{\ell!}\sum_{q=0}^{\ell}s(\ell,q)\biggl(\frac{1}{2}\biggr)^q S(q,k)
\end{aligned}
\end{equation}
for $k,\ell\in\mathbb{N}_0$, where $0^0$ was regarded as $1$. Hence, it is easy to see that
\begin{equation*}
\sum_{m=0}^{k}(-1)^{m} \binom{k}{m}\binom{m/2}{\ell}=0, \quad 0\le\ell<k.
\end{equation*}
\par
In~\cite[Theorem~2.1]{CDM-68111.tex}, the identity
\begin{equation}\label{Bell-fall-Eq}
\bell_{n,k}(\langle\alpha\rangle_1, \langle\alpha\rangle_2, \dotsc,\langle\alpha\rangle_{n-k+1})
=\frac{(-1)^k}{k!}\sum_{\ell=0}^{k}(-1)^{\ell}\binom{k}{\ell}\langle\alpha\ell\rangle_n
\end{equation}
for $n\ge k\in\mathbb{N}_0$ and $\alpha\in\mathbb{C}$ was proved. Taking $\alpha=\frac{1}{2}$ in~\eqref{Bell-fall-Eq}, applying the relation
\begin{equation*}
\langle z\rangle_n=\sum_{\ell=0}^{n}s(n,\ell)z^\ell, \quad z\in\mathbb{C}, \quad n\in\mathbb{N}_0
\end{equation*}
in~\cite[p.~9, (1.26)]{Temme-96-book}, interchanging the order of sums, and employing~\eqref{Stirling-Number-dfn} yield
\begin{align*}
\bell_{n,k}\biggl(\biggl\langle\frac12\biggr\rangle_1, \biggl\langle\frac12\biggr\rangle_2,\dotsc, \biggl\langle\frac12\biggr\rangle_{n-k+1}\biggr)
&=\frac{(-1)^k}{k!}\sum_{\ell=0}^{k}(-1)^{\ell}\binom{k}{\ell}\biggl\langle\frac{\ell}{2}\biggr\rangle_n\\
&=\frac{(-1)^k}{k!}\sum_{\ell=0}^{k}(-1)^{\ell}\binom{k}{\ell}\sum_{m=0}^{n}s(n,m)\biggl(\frac{\ell}{2}\biggr)^m\\
&=\frac{(-1)^k}{k!}\sum_{m=0}^{n}s(n,m)\biggl(\frac{1}{2}\biggr)^m\sum_{\ell=0}^{k}(-1)^{\ell}\binom{k}{\ell}\ell^m\\
&=\sum_{m=0}^{n}s(n,m)\biggl(\frac{1}{2}\biggr)^mS(m,k)
\end{align*}
for $n\ge k\in\mathbb{N}_0$.
Combining this with~\eqref{Bell-Polyn-Half} results in
\begin{equation}\label{2Stirl-sum-prod-ID}
\sum_{m=k}^{n}s(n,m)\biggl(\frac{1}{2}\biggr)^mS(m,k)
=(-1)^{n+k}\frac{(2n-2k-1)!!}{2^n} \binom{2n-k-1}{k-1}
\end{equation}
for $n\ge k\in\mathbb{N}_0$.
Substituting the identity~\eqref{2Stirl-sum-prod-ID} into~\eqref{Stack-binom-Qi-Ask}, we figure out
\begin{align*}
\sum_{m=0}^{k}(-1)^{m} \binom{k}{m}\binom{m/2}{\ell}
&=(-1)^{\ell}k!\frac{(2\ell-2k-1)!!}{(2\ell)!!}\binom{2\ell-k-1}{k-1}\\
&=\frac{(-1)^{\ell}}{(2\ell)!!}P(\ell,k)
\end{align*}
for $\ell\ge k\in\mathbb{N}_0$. The proof of Lemma~\ref{Stack-Quest-Answ-lem} is thus complete.
\end{proof}

\begin{rem}
The identity~\eqref{Stack-Quest-Answ-Eq} was announced at \url{https://math.stackexchange.com/a/4268339/} and \url{https://math.stackexchange.com/a/4268341/} online and was recovered and generalized in~\cite[Section~4]{2nd-Bell-Polyn-factoria-vl.tex}.
\end{rem}

Recall from~\cite[Chapter~XIII]{mpf-1993} and~\cite[Chapter~IV]{Widder-1941B} that an infinitely differentiable function $f$ is said to be completely (or absolutely, respectively) monotonic on an interval $I$ if it has derivatives of all orders on $I$ and satisfies $(-1)^{n}f^{(n)}(x)\ge 0$ (or $f^{(n)}(x)\ge 0$, respectively) for all $x\in I$ and $n\in\mathbb{N}_0$.
Recall from~\cite[Definition~1]{absolute-mon-simp.tex}, \cite[Definition~1]{compmon2}, and~\cite[Definition~5.10 and Comments~5.29]{Schilling-Song-Vondracek-2nd} that an infinitely differentiable and positive function $f$ is said to be logarithmically absolutely (or completely, respectively) monotonic on an interval $I$ if $[\ln f(x)]^{(n)}\ge 0$ (or $(-1)^{n}[\ln f(x)]^{(n)}\ge0$, respectively) for all $n\in\mathbb{N}$ and $x\in I$.
In~\cite[Theorem~1.1]{CBerg-2004}, \cite[Theorem~1]{absolute-mon-simp.tex}, \cite[Theorem~1]{compmon2}, and~\cite[p.~627, (1.4)]{JAAC384.tex}, the following two relations between these two pairs of functions were discovered or reviewed:
\begin{enumerate}
\item
A logarithmically completely monotonic function on an interval $I$ is also completely monotonic on $I$, but not conversely.
\item
A logarithmically absolutely monotonic function on an interval $I$ is also absolutely monotonic on $I$, but not conversely.
\end{enumerate}
It is easy to see that, a function $f(x)$ is (logarithmically) completely monotonic on an interval $I$ if and only if it is (logarithmically) absolutely monotonic on the interval $-I$ respectively. These concepts and conclusions will be employed in Section~\ref{coefficients-analysis-sec}.

\section{Taylor power series expansions of inverse tangent function}\label{arctan-taylor-maclaurin-sec}

In this section, by virtue of some preparations in Section~\ref{preparation-sec}, we now present the Maclaurin and the Taylor power series expansions of the four functions in~\eqref{four-arctangent}.

\begin{thm}\label{arctan-pi-4-ser-thm}
For $n\in\mathbb{N}$, the $n$th derivative of $\arctan z$ is
\begin{equation*}
(\arctan z)^{(n)}
=\frac{(n-1)!}{(2z)^{n-1}}\sum_{k=0}^{n-1}(-1)^k\binom{k}{n-k-1}\frac{(2z)^{2k}}{(1+z^2)^{k+1}}.
\end{equation*}
The function $\frac{\arctan z}{z}$ can be expanded into the Taylor power series
\begin{equation}\label{arctan-pi-4-ser-eq}
\frac{\arctan z}{z}=\sum_{n=0}^{\infty}(-1)^n\biggl[\frac{\pi}{4}+T(n)\biggr](z-1)^n,\quad |z-1|<\sqrt{2}\,,
\end{equation}
where
\begin{equation}\label{T(n)-dfn-notation}
T(n)=
\begin{dcases}
0, & n=0;\\
\sum_{k=1}^{n}\frac{(-1)^{k}}{k}\frac{\sin\frac{3k\pi}{4}}{2^{k/2}}, & n\in\mathbb{N}.
\end{dcases}
\end{equation}
\end{thm}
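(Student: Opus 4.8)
The plan is to establish the two assertions in turn: the closed form for $(\arctan z)^{(n)}$ first, and then the expansion of $\frac{\arctan z}{z}$ about $z=1$, the latter resting on the former evaluated at $z=1$.

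For the derivative formula I would argue by induction on $n$. The base case $n=1$ is immediate, since $(\arctan z)'=\frac1{1+z^2}$ and only the term $k=0$ survives on the right-hand side. For the inductive step I would differentiate the asserted expression for $(\arctan z)^{(n)}$ termwise, using $\frac{\td}{\td z}(2z)^{m}=2m(2z)^{m-1}$ and $\frac{\td}{\td z}(1+z^2)^{-(k+1)}=-2(k+1)z(1+z^2)^{-(k+2)}$. Each summand then breaks into a multiple of $\frac{(2z)^{2k-n}}{(1+z^2)^{k+1}}$ and a multiple of $\frac{(2z)^{2k-n+2}}{(1+z^2)^{k+2}}$, and both of these belong to the one--parameter family $\frac{(2z)^{2j-n}}{(1+z^2)^{j+1}}$ (take $j=k$ in the first and $j=k+1$ in the second). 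Collecting the coefficient of $\frac{(2z)^{2j-n}}{(1+z^2)^{j+1}}$ and matching it against the coefficient prescribed by the formula for $(\arctan z)^{(n+1)}$ reduces the whole step to the single binomial identity
\[
2(2j-n+1)\binom{j}{n-j-1}+j\binom{j-1}{n-j}=n\binom{j}{n-j}.
\]

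This identity is routine: the absorption relations $\binom{j}{n-j-1}=\frac{n-j}{2j-n+1}\binom{j}{n-j}$ and $j\binom{j-1}{n-j}=(2j-n)\binom{j}{n-j}$ recast its left-hand side as $\bigl[2(n-j)+(2j-n)\bigr]\binom{j}{n-j}=n\binom{j}{n-j}$, while the degenerate cases $2j-n+1=0$, $j=0$, and $j=n$ are checked by hand (both sides then reduce to the same boundary value). Thus Part~1 involves no real difficulty beyond careful bookkeeping of the index shift.

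For the expansion about $z=1$ I would first secure Taylor's series of $\arctan z$ itself. Writing $(\arctan z)'=\frac1{1+z^2}=\frac1{2\ti}\bigl(\frac1{z-\ti}-\frac1{z+\ti}\bigr)$ and expanding each simple fraction in powers of $z-1$ via $\frac1{1-\ti}=\frac1{\sqrt2}\,\te^{\ti\pi/4}$ and $\frac1{1+\ti}=\frac1{\sqrt2}\,\te^{-\ti\pi/4}$, the conjugate exponentials collapse into a sine and give $(\arctan z)'=\sum_{m=0}^{+\infty}(-1)^m\frac{\sin\frac{(m+1)\pi}{4}}{2^{(m+1)/2}}(z-1)^m$ on $|z-1|<\sqrt2$. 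Integrating termwise with $\arctan1=\frac\pi4$ yields
\[
\arctan z=\frac\pi4+\sum_{n=1}^{+\infty}(-1)^{n-1}\frac{\sin\frac{n\pi}{4}}{n\,2^{n/2}}(z-1)^n.
\]
Equivalently, one may specialize the derivative formula of Part~1 at $z=1$ and reduce the arising sum $\sum_{k}(-1)^k2^k\binom{k}{n-k-1}$ to $(-1)^{n-1}2^{n/2}\sin\frac{n\pi}{4}$.

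Finally I would multiply by $\frac1z=\sum_{m=0}^{+\infty}(-1)^m(z-1)^m$, valid on $|z-1|<1$, and form the Cauchy product. Writing $\arctan z=\sum_{k\ge0}\alpha_k(z-1)^k$ with $\alpha_0=\frac\pi4$ and $\alpha_k=(-1)^{k-1}\frac{\sin(k\pi/4)}{k\,2^{k/2}}$, the coefficient of $(z-1)^n$ comes out as $(-1)^n\bigl[\frac\pi4+\sum_{k=1}^n(-1)^k\alpha_k\bigr]$, and the trigonometric identity $\sin\frac{3k\pi}{4}=-(-1)^k\sin\frac{k\pi}{4}$ identifies $\sum_{k=1}^n(-1)^k\alpha_k$ with $T(n)$, which is precisely the stated coefficient. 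Since $\frac{\arctan z}{z}$ extends analytically across the removable singularity at $z=0$ and its nearest genuine singularities are the branch points $z=\pm\ti$ at distance $|1\mp\ti|=\sqrt2$, the uniqueness of Taylor coefficients promotes the identity from $|z-1|<1$ to the full disc $|z-1|<\sqrt2$ and fixes the radius of convergence at $\sqrt2$. I expect the genuinely delicate points to be the trigonometric recasting into the exact shape of $T(n)$ and the analytic-continuation argument that the coefficients computed where the geometric series for $\frac1z$ converges remain valid throughout $|z-1|<\sqrt2$.
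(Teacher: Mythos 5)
Your proposal is correct, but it takes a genuinely different route from the paper on both halves. For the derivative formula the paper does not induct: it writes $(\arctan z)^{(n)}=\bigl(\frac{1}{1+z^2}\bigr)^{(n-1)}$ and applies the Fa\`a di Bruno formula together with the special Bell-polynomial values $\bell_{n,k}(\alpha,1,0,\dotsc,0)$ and the homogeneity identity, so the closed form is \emph{produced} rather than merely verified; your induction, resting on the absorption identity $2(2j-n+1)\binom{j}{n-j-1}+j\binom{j-1}{n-j}=n\binom{j}{n-j}$ (which checks out, including the degenerate cases), is more elementary and self-contained, but it presupposes the formula and forgoes the Bell-polynomial machinery that the paper deliberately sets up and reuses in its later theorems. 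For the expansion about $z=1$ the paper applies the Leibniz rule to $\arctan z\cdot\frac1z$, lets $z\to1$ in the $n$th derivative, and must then invoke the nontrivial combinatorial identity $\sum_{\ell=0}^{k}(-2)^\ell\binom{\ell}{k-\ell}=2^{(k+1)/2}\sin\frac{3(k+1)\pi}{4}$, imported from Gould's tables; your partial-fraction decomposition $\frac1{1+z^2}=\frac1{2\ti}\bigl(\frac1{z-\ti}-\frac1{z+\ti}\bigr)$ with $1\mp\ti=\sqrt{2}\,\te^{\mp\ti\pi/4}$ generates the trigonometric coefficients directly and at no combinatorial cost --- in effect your computation \emph{proves} the identity the paper cites. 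The price is that your Cauchy product with $\frac1z=\sum_{m\ge0}(-1)^m(z-1)^m$ is a priori valid only on $|z-1|<1$, so you need the uniqueness-of-Taylor-coefficients step to promote the expansion to $|z-1|<\sqrt{2}\,$; that step is sound, since the disc $|z-1|<\sqrt{2}\,$ avoids the cuts issuing from $\pm\ti$ (for $t\ge1$ one has $|1\mp\ti t|=\sqrt{1+t^2}\ge\sqrt{2}\,$) and the radius is fixed by the nearest singularities $\pm\ti$, exactly as in the paper's closing argument. A further structural difference worth noting: in your arrangement the second half is independent of the first (you only remark on the optional link via evaluation at $z=1$), whereas in the paper the Leibniz computation of Part 2 consumes the derivative formula of Part 1 as an essential input.
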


\begin{proof}
For $n\in\mathbb{N}$, by virtue of the Fa\`a di Bruno formula~\eqref{Bruno-Bell-Polynomial}, we arrive at
\begin{align*}
(\arctan z)^{(n)}&=\biggl(\frac1{1+z^2}\biggr)^{(n-1)}\\
&=\sum_{k=0}^{n-1}\frac{(-1)^kk!}{(1+z^2)^{k+1}}\bell_{n-1,k}(2z,2,0,\dotsc,0)\\
&=\sum_{k=0}^{n-1}\frac{(-1)^kk!}{(1+z^2)^{k+1}}2^k\bell_{n-1,k}(z,1,0,\dotsc,0)\\
&=\sum_{k=0}^{n-1}\frac{(-1)^kk!}{(1+z^2)^{k+1}}2^k
\frac{(n-k-1)!}{2^{n-k-1}}\binom{n-1}{k}\binom{k}{n-k-1}z^{2k-n+1}\\
&=\frac{(n-1)!}{(2z)^{n-1}}\sum_{k=0}^{n-1}(-1)^k\binom{k}{n-k-1}\frac{(2z)^{2k}}{(1+z^2)^{k+1}},
\end{align*}
where we also utilized the identities~\eqref{Bell(n-k)} and~\eqref{Bell-x-1-0-eq}. Accordingly, for $n\in\mathbb{N}_0$, we have
\begin{align*}
\biggl(\frac{\arctan z}{z}\biggr)^{(n)}&=\sum_{k=0}^{n}\binom{n}{k}(\arctan z)^{(k)} \biggl(\frac1z\biggr)^{(n-k)}\\
&=\frac{(-1)^nn!\arctan z}{z^{n+1}} \\
&\quad+\sum_{k=1}^{n}\binom{n}{k}\frac{(k-1)!}{(2z)^{k-1}} \sum_{\ell=0}^{k-1}(-1)^\ell\binom{\ell}{k-\ell-1}\frac{(2z)^{2\ell}(-1)^{n-k}(n-k)!}{(1+z^2)^{\ell+1}z^{n-k+1}}\\
&\to(-1)^nn!\Biggl[\frac{\pi}{4} +\sum_{k=1}^{n}\frac{(-1)^{k}}{2^kk} \sum_{\ell=0}^{k-1}(-2)^\ell\binom{\ell}{k-\ell-1}\Biggr]\\
&=(-1)^nn!\biggl[\frac{\pi}{4}+T(n)\biggr]
\end{align*}
as $z\to1$, where we used the identity
\begin{multline}\label{Cos-sin=comb-ID}
\sum_{\ell=0}^{k}(-2)^\ell\binom{\ell}{k-\ell}
=(-2)^k\sum_{\ell=0}^{\lfloor k/2\rfloor}\biggl(-\frac{1}{2}\biggr)^{\ell}\binom{k-\ell}{\ell}\\
=2^{k/2}\biggl(\cos\frac{3k\pi}{4}-\sin\frac{3k\pi}{4}\biggr)
=2^{(k+1)/2}\sin\frac{3(k+1)\pi}{4}
\end{multline}
for $k\in\mathbb{N}_0$, which can be deduced from taking $z=-\frac12$ in the identity
\begin{equation}\label{Riordan(1.70)B}
\sum_{k=0}^{\lfloor n/2\rfloor}\binom{n-k}{k}z^k
=\frac{1}{\sqrt{1+4z}\,} \biggl[\biggl(\frac{1+\sqrt{1+4z}\,}{2}\biggr)^{n+1} -\biggl(\frac{1-\sqrt{1+4z}\,}{2}\biggr)^{n+1}\biggr]
\end{equation}
or
\begin{equation}\label{Riordan(1.71)B}
\sum_{k=0}^{\lfloor n/2\rfloor}\binom{n-k}{k}z^k
=\frac{x^{n+1}-1}{(x-1)(1+x)^n}, \quad z=-\frac{x}{(1+x)^2}
\end{equation}
collected as~(1.70) and~(1.71) on~\cite[p.~33]{Sprugnoli-Gould-2006}, where $\lfloor{x}\rfloor$ denotes the floor function whose value is equal to the largest integer less than or equal to $x$.
Consequently, we obtain the Taylor power series expansion~\eqref{arctan-pi-4-ser-eq}.
\par
The unremovable singular points of $\frac{\arctan z}{z}$ closest to the point $z=1$ are $\pm\ti$. Therefore, the convergent disc of the Taylor power series expansion~\eqref{arctan-pi-4-ser-eq} is $|z-1|<\sqrt{2}\,$.
The proof of Theorem~\ref{arctan-pi-4-ser-thm} is complete.
\end{proof}

\begin{rem}
The first few values of the sequence $T(n)$ for $0\le n\le9$ are given by
\begin{equation*}
0,\quad  -\frac{1}{2},\quad  -\frac{3}{4},\quad  -\frac{5}{6},\quad  -\frac{5}{6},\quad  -\frac{97}{120},\quad  -\frac{63}{80},\quad  -\frac{109}{140},\quad  -\frac{109}{140},\quad  -\frac{7883}{10080}.
\end{equation*}
These values reveal that $T(n)<0$ and $T(4n-1)=T(4n)$ for $n\in\mathbb{N}$.
\par
Taking the limit $z\to0$ on both side of the series expansion~\eqref{arctan-pi-4-ser-eq} in Theorem~\ref{arctan-pi-4-ser-thm} shows
\begin{equation*}
\sum_{n=0}^{\infty}\biggl[\frac{\pi}{4}+T(n)\biggr]=1.
\end{equation*}
By the necessary condition for a series to be convergent, we obtain
\begin{equation}\label{T(n)lim-to-Pi}
\lim_{n\to\infty}T(n)=\sum_{k=1}^{\infty}\frac{(-1)^{k}}{k}\frac{\sin\frac{3k\pi}{4}}{2^{k/2}}
=-\frac{\pi}{4}.
\end{equation}
We can regard~\eqref{T(n)lim-to-Pi} as a series representation of the circular constant $\pi$.
\end{rem}

\begin{rem}
Except~\eqref{Riordan(1.70)B} and~\eqref{Riordan(1.71)B}, the identities
\begin{align*}
\sum_{k=0}^{\lfloor n/2\rfloor}(-1)^k\binom{n-k}{k}(xy)^k(x+y)^{n-2k}
&=\frac{x^{n+1}-y^{n+1}}{x-y},\\
\sum_{k=0}^{\lfloor n/2\rfloor}\binom{n-k}{k}(2\cos x)^{n-2k}
&=\frac{\sin[(n+1)x]}{\sin x},
\end{align*}
and
\begin{equation*}
\sum_{k=0}^{\lfloor n/2\rfloor}\binom{n-k}{k}z^k2^{n-2k}
=\frac{x^{n+1}-y^{n+1}}{x-y}, \quad
\begin{dcases}
x=1+\sqrt{z+1}\,;\\
y=1-\sqrt{z+1}\,,
\end{dcases}
\end{equation*}
proved in~\cite[pp.~3---31]{Sprugnoli-Gould-2006}, can also be used to deduce the identity~\eqref{Cos-sin=comb-ID}.
\end{rem}

\begin{cor}\label{arctan-frac-ser-mid-cor}
For $|z-1|<1$, the function $\frac{\arctan z-\frac{\pi}{4}}{z}$ has the Taylor power series expansion
\begin{equation}\label{arctan-frac-ser-mid}
\frac{\arctan z-\frac{\pi}{4}}{z}=\sum_{n=1}^{\infty}(-1)^nT(n)(z-1)^n,
\end{equation}
where $T(n)$ is defined by~\eqref{T(n)-dfn-notation}.
\end{cor}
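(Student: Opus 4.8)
The plan is to obtain \eqref{arctan-frac-ser-mid} directly from Taylor's series expansion \eqref{arctan-pi-4-ser-eq} already established in Theorem~\ref{arctan-pi-4-ser-thm}, by isolating and cancelling the contribution of the constant $\frac{\pi}{4}$. First I would write
\[
\frac{\arctan z-\frac{\pi}{4}}{z}=\frac{\arctan z}{z}-\frac{\pi}{4}\,\frac1z
\]
and recall the elementary geometric expansion of $\frac1z$ about the point $z=1$, namely
\[
\frac1z=\frac1{1+(z-1)}=\sum_{n=0}^{+\infty}(-1)^n(z-1)^n,\quad |z-1|<1.
\]

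Next I would substitute expansion \eqref{arctan-pi-4-ser-eq} for $\frac{\arctan z}{z}$ together with the expansion above for $\frac{\pi}{4}\frac1z$, and subtract the two series coefficientwise. Since the coefficient of $(z-1)^n$ in $\frac{\arctan z}{z}$ equals $(-1)^n\bigl[\frac{\pi}{4}+T(n)\bigr]$, while the corresponding coefficient in $\frac{\pi}{4}\frac1z$ equals $(-1)^n\frac{\pi}{4}$, the constant $\frac{\pi}{4}$ cancels and the coefficient of $(z-1)^n$ in the difference is exactly $(-1)^nT(n)$. Because $T(0)=0$ by definition \eqref{T(n)-dfn-notation}, the $n=0$ term vanishes and the summation may be started at $n=1$, which yields precisely \eqref{arctan-frac-ser-mid}.

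Finally I would pin down the region of validity. The series for $\frac{\arctan z}{z}$ converges on $|z-1|<\sqrt2$, whereas the geometric series for $\frac1z$ converges only on $|z-1|<1$, the pole of $\frac1z$ at the origin lying at distance $1$ from the expansion center $z=1$. The termwise subtraction is therefore legitimate on the intersection $|z-1|<1$, and this disc is also the exact region of convergence of the resulting series, since the factor $\frac1z$ introduces a singularity at $z=0$ that is absent from $\frac{\arctan z}{z}$. The only point requiring any care is this shrinkage of the radius from $\sqrt2$ down to $1$; the remainder is a routine cancellation, so I anticipate no substantial obstacle in the argument.
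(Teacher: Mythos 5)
Your proposal is correct and is essentially the paper's own argument: the paper likewise reformulates the expansion~\eqref{arctan-pi-4-ser-eq} by splitting off the $\frac{\pi}{4}$ part and summing the geometric series $\sum_{n=0}^{+\infty}(1-z)^n=\frac1z$ on $|z-1|<1$, which is the mirror image of your coefficientwise subtraction of $\frac{\pi}{4}\cdot\frac1z$. Your extra remarks on $T(0)=0$ and on the radius shrinking to $1$ because of the pole of $\frac{\arctan z-\frac{\pi}{4}}{z}$ at $z=0$ are accurate and merely make explicit what the paper leaves implicit.
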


\begin{proof}
The Taylor power series expansion~\eqref{arctan-pi-4-ser-eq} can be reformulated as
\begin{equation*}
\frac{\arctan z}{z}=\frac{\pi}{4}\sum_{n=0}^{\infty}(1-z)^n+\sum_{n=0}^{\infty}T(n)(1-z)^n
=\frac{\pi}{4z}+\sum_{n=1}^{\infty}T(n)(1-z)^n
\end{equation*}
for $|z-1|<1$. The proof of Corollary~\ref{arctan-frac-ser-mid-cor} is complete.
\end{proof}

\begin{cor}
For $|z|<\sqrt{2}\,$, the function $\arctan(1-z)$ has the Maclaurin power series expansion
\begin{align*}
\arctan (1-z)&=\frac{\pi}{4}+\sum_{n=1}^{\infty}\frac{(-1)^{n}}{2^{n/2}n}\sin\frac{3n\pi}{4}z^n\\
&=\frac{\pi}{4}-\frac{z}{2}-\frac{z^2}{4}-\frac{z^3}{12}+\frac{z^5}{40}+\frac{z^6}{48}+\frac{z^7}{112}-\frac{z^9}{288}-\dotsm.
\end{align*}
\end{cor}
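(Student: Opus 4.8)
The plan is to derive the expansion directly from Taylor's series expansion~\eqref{arctan-pi-4-ser-eq} of $\frac{\arctan z}{z}$ proved in Theorem~\ref{arctan-pi-4-ser-thm}, by the substitution $z\mapsto1-z$ followed by multiplication by the factor $1-z$. Since $(1-z)-1=-z$, replacing $z$ by $1-z$ in~\eqref{arctan-pi-4-ser-eq} is legitimate exactly on the disc $|z|<\sqrt{2}\,$ and produces
\begin{equation*}
\frac{\arctan(1-z)}{1-z}
=\sum_{n=0}^{+\infty}(-1)^n\Bigl[\frac{\pi}{4}+T(n)\Bigr](-z)^n
=\sum_{n=0}^{+\infty}\Bigl[\frac{\pi}{4}+T(n)\Bigr]z^n,
\end{equation*}
where the two factors $(-1)^n$ cancel. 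Multiplying through by $1-z$ then expresses $\arctan(1-z)$ as a single power series in $z$.

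The heart of the argument is reading off the coefficients after this multiplication, which is a telescoping. Writing $s_n=\frac{\pi}{4}+T(n)$, the product $(1-z)\sum_{n=0}^{+\infty}s_nz^n$ has constant term $s_0$ and, for $n\in\mathbb{N}$, coefficient $s_n-s_{n-1}=T(n)-T(n-1)$. By the definition~\eqref{T(n)-dfn-notation} of $T(n)$ as a partial sum, this difference collapses to the single summand
\begin{equation*}
T(n)-T(n-1)=\frac{(-1)^{n}}{2^{n/2}n}\sin\frac{3n\pi}{4},
\end{equation*}
which is precisely the claimed coefficient of $z^n$. The constant term equals $s_0=\frac{\pi}{4}+T(0)=\frac{\pi}{4}$, consistent with the value $\arctan1=\frac{\pi}{4}$ at $z=0$.

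Two routine matters remain. First, on the open disc $|z|<\sqrt{2}\,$ the series~\eqref{arctan-pi-4-ser-eq} converges absolutely, so multiplying by the polynomial $1-z$ and regrouping the terms cause no difficulty. Second, for the radius of convergence one checks that the nearest singularities of $\arctan(1-z)$ are the branch points where $1-z=\pm\ti$, namely $z=1\mp\ti$, each lying at distance $\sqrt{2}\,$ from the origin; hence $|z|<\sqrt{2}\,$ is the maximal disc, matching the radius inherited from~\eqref{arctan-pi-4-ser-eq}. I do not anticipate any genuine obstacle: once the substitution is carried out the result is pure bookkeeping, the only points requiring attention being the cancellation of the factors $(-1)^n$ and the telescoping identity $T(n)-T(n-1)=\frac{(-1)^n}{2^{n/2}n}\sin\frac{3n\pi}{4}$, both immediate from the definitions.
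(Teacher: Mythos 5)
Your proposal is correct and is essentially the paper's own argument: the paper's proof consists of the single sentence ``replace $z$ by $1-z$ in Taylor's series expansion~\eqref{arctan-pi-4-ser-eq} and simplify,'' and your cancellation of the factors $(-1)^n$ followed by multiplication by $1-z$ and the telescoping $T(n)-T(n-1)=\frac{(-1)^n}{2^{n/2}n}\sin\frac{3n\pi}{4}$ is precisely that simplification made explicit, including the correct identification of the radius $\sqrt{2}\,$ from the branch points $z=1\mp\ti$. One further point in your favor: your derivation yields the constant term $\frac{\pi}{4}+T(0)=\frac{\pi}{4}=\arctan1$, which shows that the value $\frac{\pi}{2}$ printed in the corollary is a typo for $\frac{\pi}{4}$ --- as is also confirmed by letting $z\to1$, where~\eqref{T(n)lim-to-Pi} gives $\frac{\pi}{4}+\sum_{n=1}^{+\infty}\frac{(-1)^{n}}{2^{n/2}n}\sin\frac{3n\pi}{4}=0=\arctan0$.
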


\begin{proof}
This follows from replacing $z$ by $1-z$ in the Taylor power series expansion~\eqref{arctan-pi-4-ser-eq} and simplifying; see also the answer at the web site \url{https://math.stackexchange.com/a/4332741/}.
\end{proof}

\begin{rem}
Letting $z=\sqrt{3}\,$ on both sides of the Taylor power series expansion~\eqref{arctan-frac-ser-mid} results in a series representation
\begin{equation*}
\pi=12\sqrt{3}\,\sum_{n=1}^{\infty}(-1)^{n+1}T(n)\bigl(\sqrt{3}\,-1\bigr)^n,
\end{equation*}
where $T(n)$ is defined by~\eqref{T(n)-dfn-notation}.
\end{rem}

\begin{thm}\label{arctan-sqrt-ser-expan-thm}
The function $\frac{\arctan\sqrt{z}\,}{\sqrt{z}\,}$ has the Taylor power series expansion
\begin{multline}\label{arctan-sqrt-expan-eq}
\frac{\arctan\sqrt{z}\,}{\sqrt{z}\,}\\
=\sum_{n=0}^{\infty} \frac{(-1)^n}{2^n}\Biggl[\frac{(2n-1)!!\pi}{4}
+\frac{n!}{2^{n}}\sum_{k=1}^{n} (-1)^{k}\binom{2n-k}{n} \frac{2^{k/2}}{k}\sin\frac{3k\pi}{4}\Biggr] \frac{(z-1)^n}{n!}
\end{multline}
for $|z-1|<1$, where $(-1)!!=1$ is defined by~\eqref{double-factorial-neg-odd-integer}.
\end{thm}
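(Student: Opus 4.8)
The plan is to treat $\frac{\arctan\sqrt z\,}{\sqrt z\,}$ as the composite $H(\sqrt z\,)$ with $H(w)=\frac{\arctan w}{w}$ and to extract its Taylor coefficients at $z=1$ by the Fa\`a di Bruno formula~\eqref{Bruno-Bell-Polynomial}. From Theorem~\ref{arctan-pi-4-ser-thm} the Taylor coefficients of $H$ at $w=1$ give $H^{(k)}(1)=(-1)^k k!\bigl[\frac{\pi}{4}+T(k)\bigr]$, while the inner function $h(z)=\sqrt z\,=z^{1/2}$ satisfies $h^{(j)}(1)=\bigl\langle\frac12\bigr\rangle_j$. Hence the Bell polynomials appearing in~\eqref{Bruno-Bell-Polynomial} are exactly $\bell_{n,k}\bigl(\langle\frac12\rangle_1,\dotsc,\langle\frac12\rangle_{n-k+1}\bigr)$, whose closed form is supplied by~\eqref{Bell-Polyn-Half}. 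Substituting both ingredients and collapsing the sign $(-1)^k(-1)^{n+k}=(-1)^n$, I would arrive at
\begin{equation*}
\frac1{n!}\biggl(\frac{\arctan\sqrt z\,}{\sqrt z\,}\biggr)^{(n)}\bigg|_{z=1}
=\frac{(-1)^n}{2^n n!}\sum_{k=0}^{n}\Bigl[\frac{\pi}{4}+T(k)\Bigr]P(n,k),
\end{equation*}
where $P(n,k)$ is the quantity introduced in~\eqref{P(n-k)-dfn-Notation}.

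Next I would split this sum into its $\frac{\pi}{4}$-part and its $T$-part and reduce each to a tail sum of $P(n,k)$. The $T$-part is handled by writing $T(k)=\sum_{j=1}^{k}\frac{(-1)^j}{2^{j/2}j}\sin\frac{3j\pi}{4}$ (and $T(0)=0$) and interchanging the order of summation, which turns $\sum_{k=0}^n P(n,k)T(k)$ into $\sum_{j=1}^{n}\frac{(-1)^j}{2^{j/2}j}\sin\frac{3j\pi}{4}\sum_{k=j}^{n}P(n,k)$, while the $\frac{\pi}{4}$-part is just $\frac{\pi}{4}\sum_{k=0}^n P(n,k)$. Thus everything hinges on the single tail sum $\sum_{k=j}^{n}P(n,k)$ for $0\le j\le n$.

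The crux is to evaluate this tail sum in closed form, and here the combinatorial Lemma~\ref{sum-central-binom-lem-ell} is tailor-made. I would first simplify $P(n,k)$ by rewriting the double factorial via $[2m-1]!!=\frac{(2m)!}{2^m m!}$ (with $m=n-k$, consistent with~\eqref{double-factorial-neg-odd-integer} at $m=0$) and cancelling, obtaining the compact form $P(n,k)=\frac{(n-1)!}{2^n}\binom{2n-k-1}{n-1}2^k k$. Then
\begin{equation*}
\sum_{k=j}^{n}P(n,k)=\frac{(n-1)!}{2^n}\sum_{k=j}^{n}\binom{2n-k-1}{n-1}2^k k
=\frac{(n-1)!}{2^n}\binom{2n-j}{n}2^j n=\frac{n!}{2^n}2^j\binom{2n-j}{n}
\end{equation*}
by~\eqref{sum-central-binom-ell-eq} with $\ell=j$. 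Taking $j=0$ gives $\sum_{k=0}^n P(n,k)=\frac{n!}{2^n}\binom{2n}{n}=(2n-1)!!$, which produces the term $(2n-1)!!\frac{\pi}{4}$; feeding the general $j$ back into the $T$-part and absorbing the factor $2^j$ against $\frac1{2^{j/2}}$ reproduces $\frac{n!}{2^n}\sum_{k=1}^{n}(-1)^k\binom{2n-k}{n}\frac{2^{k/2}}{k}\sin\frac{3k\pi}{4}$, yielding the coefficient formula~\eqref{arctan-sqrt-expan-eq}.

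The main obstacle is precisely the algebraic simplification of $P(n,k)$ into the form on which Lemma~\ref{sum-central-binom-lem-ell} acts, together with the careful summation-order interchange; once those are in place, the lemma carries out all the genuine combinatorial work and the two halves assemble automatically. Convergence on the claimed disc $|z-1|<1$ follows because this disc lies within the domain of analyticity of $\frac{\arctan\sqrt z\,}{\sqrt z\,}$, so the constructed Taylor series indeed represents the function there.
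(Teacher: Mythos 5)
Your proposal is correct, and its first half (Fa\`a di Bruno with the inner function $\sqrt{z}$, the values $h^{(j)}(1)=\bigl\langle\frac12\bigr\rangle_j$, the closed form~\eqref{Bell-Polyn-Half}, and the derivatives of $\frac{\arctan w}{w}$ at $w=1$ from Theorem~\ref{arctan-pi-4-ser-thm}) coincides exactly with the paper's, arriving at the same intermediate expansion~\eqref{arctan-sqrt-ser-expan-eq} with coefficients $\frac{(-1)^n}{(2n)!!}\sum_{k=0}^{n}P(n,k)\bigl[\frac{\pi}{4}+T(k)\bigr]$. Where you genuinely diverge is in evaluating the two $P$-sums. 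The paper takes a long detour: it replaces $\sqrt{z}$ by $z$, compares the result against~\eqref{arctan-pi-4-ser-eq}, differentiates $q$ times (a second Bell-polynomial computation), sets $z=1$, separates rational and irrational parts to obtain the identities~\eqref{Curious-id-ID-O} and~\eqref{Curious-id-ID-T}, and only then applies the inversion theorem~\eqref{Qi-Zou-Guo-Inversion-thm} together with Lemma~\ref{sum-central-binom-lem-ell} to reach~\eqref{Curious-id-inv-O} and~\eqref{Curious-id-inv-TT}. You instead prove the pointwise identity $P(n,k)=\frac{(n-1)!}{2^n}\binom{2n-k-1}{n-1}2^kk$ by direct double-factorial algebra (both sides equal $\frac{k\,(2n-k-1)!}{2^{n-k}(n-k)!}$, and the $k=0$ term correctly vanishes for $n\ge1$), interchange a \emph{finite} double sum, and invoke Lemma~\ref{sum-central-binom-lem-ell} once, with general $\ell$, to evaluate the tails $\sum_{k=j}^{n}P(n,k)=\frac{n!}{2^n}2^j\binom{2n-j}{n}$; the cases $j=0$ and $j\ge1$ then deliver $(2n-1)!!\frac{\pi}{4}$ and the sine sum simultaneously. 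Your route is shorter and more elementary: it dispenses with the inversion theorem, with the auxiliary $q$-fold differentiation, and with the rational/irrational separation (which tacitly rests on the linear independence of $1$ and $\pi$ over $\mathbb{Q}$), because the compact form of $P(n,k)$ makes explicit what the paper extracts only implicitly through inversion. What the paper's detour buys in exchange is a harvest of by-product identities (\eqref{Curious-id-ID-O}, \eqref{Curious-id-ID-T}, and, in the subsequent remarks, \eqref{sine-sum-id-one} and~\eqref{sine-sum-id-two}) that your streamlined argument does not produce. The only cosmetic point to note is that your closed form of $P(n,k)$ presupposes $n\ge1$, with the $n=0$ term of the series handled trivially as $\frac{\pi}{4}$ via $(-1)!!=1$; your convergence argument ($z=0$ being the nearest singularity to $z=1$) matches the paper's.
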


\begin{proof}
Let $w=f(z)=\sqrt{z}\,\to1$ as $z\to1$. In light of the Fa\`a di Bruno formula~\eqref{Bruno-Bell-Polynomial}, in view of the identity~\eqref{Bell(n-k)}, and by virtue of~\eqref{Bell-Polyn-Half}, we discover
\begin{align*}
\biggl(\frac{\arctan\sqrt{z}\,}{\sqrt{z}\,}\biggr)^{(n)}
&=\sum_{k=0}^{n} \biggl(\frac{\arctan w}{w}\biggr)^{(k)}\\
&\quad\times\bell_{n,k}\biggl(\biggl\langle\frac{1}{2}\biggr\rangle_1\frac{1}{z^{1/2}},\biggl\langle\frac{1}{2}\biggr\rangle_1\frac{1}{z^{3/2}},\dotsc, \biggl\langle\frac{1}{2}\biggr\rangle_{n-k+1}\frac{1}{z^{[2(n-k)+1]/2}}\biggr)\\
&=\sum_{k=0}^{n} \biggl(\frac{\arctan w}{w}\biggr)^{(k)} z^{k/2-n} \bell_{n,k}\biggl(\biggl\langle\frac{1}{2}\biggr\rangle_1,\biggl\langle\frac{1}{2}\biggr\rangle_1, \dotsc, \biggl\langle\frac{1}{2}\biggr\rangle_{n-k+1}\biggr)\\
&=\sum_{k=0}^{n} \biggl(\frac{\arctan w}{w}\biggr)^{(k)} z^{k/2-n} (-1)^{n+k}\frac{(2n-2k-1)!!}{2^n} \binom{2n-k-1}{k-1}\\
&\to
\frac{(-1)^{n}}{2^n}\sum_{k=0}^{n} P(n,k)\biggl[\frac{\pi}{4}+T(k)\biggr]
\end{align*}
as $z\to1$ for $n\in\mathbb{N}_0$, where in the last step we used the series expansion~\eqref{arctan-pi-4-ser-eq} in Theorem~\ref{arctan-pi-4-ser-thm}
and used the notation $P(n,k)$ and $T(k)$ defined by~\eqref{P(n-k)-dfn-Notation} and~\eqref{T(n)-dfn-notation}, respectively. Consequently, we acquire the Taylor power series expansion
\begin{equation}\label{arctan-sqrt-ser-expan-eq}
\frac{\arctan\sqrt{z}\,}{\sqrt{z}\,}=\sum_{n=0}^{\infty} (-1)^n\Biggl(\sum_{k=0}^{n} P(n,k) \biggl[\frac{\pi}{4}+T(k)\biggr]\Biggr) \frac{(z-1)^n}{(2n)!!}, \quad |z-1|<1.
\end{equation}
The unremovable singular point of the function $\frac{\arctan\sqrt{z}\,}{\sqrt{z}\,}$ closest to the point $z=1$ is $z=0$. Accordingly, the convergent disc of the series in~\eqref{arctan-sqrt-ser-expan-eq} on the complex plane $\mathbb{C}$ is $|z-1|<1$.
\par
Replacing $\sqrt{z}\,$ by $z$ on both sides of~\eqref{arctan-sqrt-ser-expan-eq} leads to
\begin{equation}\label{arctan-sqrt-ser-cor-eq}
\frac{\arctan z}{z}=\sum_{n=0}^{\infty} (-1)^n\Biggl(\sum_{k=0}^{n} P(n,k) \biggl[\frac{\pi}{4}+T(k)\biggr]\Biggr) \frac{\bigl(z^2-1\bigr)^n}{(2n)!!},\quad |z^2-1|<1.
\end{equation}
Comparing the series expansion~\eqref{arctan-pi-4-ser-eq} in Theorem~\ref{arctan-pi-4-ser-thm} with the series expansion~\eqref{arctan-sqrt-ser-cor-eq} shows
\begin{equation*}
\sum_{n=0}^{\infty}(-1)^n\biggl[\frac{\pi}{4}+T(n)\biggr](z-1)^n
=\sum_{n=0}^{\infty} (-1)^n\Biggl(\sum_{k=0}^{n} P(n,k) \biggl[\frac{\pi}{4}+T(k)\biggr]\Biggr) \frac{\bigl(z^2-1\bigr)^n}{(2n)!!}.
\end{equation*}
Differentiating $q\in\mathbb{N}_0$ times with respect to $z$ on both sides of this equation gives
\begin{align*}
&\quad\sum_{n=q}^{\infty}(-1)^n\biggl[\frac{\pi}{4}+T(n)\biggr]\langle n\rangle_q(z-1)^{n-q}\\
&=\sum_{n=0}^{\infty} \frac{(-1)^n}{(2n)!!}\Biggl(\sum_{k=0}^{n} P(n,k) \biggl[\frac{\pi}{4}+T(k)\biggr]\Biggr) \bigl[\bigl(z^2-1\bigr)^n\bigr]^{(q)}\\
&=\sum_{n=0}^{\infty} \frac{(-1)^n}{(2n)!!}\Biggl(\sum_{k=0}^{n} P(n,k) \biggl[\frac{\pi}{4}+T(k)\biggr]\Biggr)\\
&\quad\times\sum_{p=0}^{q}(2z)^{2p-q}(q-p)!\binom{q}{p}\binom{p}{q-p}\langle n\rangle_p \bigl(z^2-1\bigr)^{n-p}\\
&=\sum_{p=0}^{q}(2z)^{2p-q}(q-p)!\binom{q}{p}\binom{p}{q-p}\\
&\quad\times\sum_{n=0}^{\infty} \frac{(-1)^n}{(2n)!!}\Biggl(\sum_{k=0}^{n} P(n,k) \biggl[\frac{\pi}{4}+T(k)\biggr]\Biggr) \langle n\rangle_p \bigl(z^2-1\bigr)^{n-p},
\end{align*}
where we used the derivative formula
\begin{align*}
\bigl[\bigl(z^2-1\bigr)^n\bigr]^{(q)}&=\sum_{p=0}^{q} \bigl(u^n\bigr)^{(p)}\bell_{q,p}(2z,2,0,\dotsc,0), \quad u=u(z)=z^2-1\\
&=\sum_{p=0}^{q}\langle n\rangle_p \bigl(z^2-1\bigr)^{n-p}2^p\bell_{q,p}(z,1,0,\dotsc,0)\\
&=\sum_{p=0}^{q}(2z)^{2p-q}(q-p)!\binom{q}{p}\binom{p}{q-p}\langle n\rangle_p \bigl(z^2-1\bigr)^{n-p}
\end{align*}
for $q\in\mathbb{N}_0$, in which we employed the formulas~\eqref{Bruno-Bell-Polynomial}, \eqref{Bell(n-k)}, and~\eqref{Bell-x-1-0-eq}.
Furthermore, letting $z=1$ and simplifying reveal
\begin{align*}
(-1)^qq!\biggl[\frac{\pi}{4}+T(q)\biggr]
&=\sum_{p=0}^{q}(-1)^p2^{p-q}(q-p)!\binom{q}{p}\binom{p}{q-p} \sum_{k=0}^{p} P(p,k) \biggl[\frac{\pi}{4}+T(k)\biggr]\\
&=\sum_{k=0}^{q} \Biggl[\sum_{p=k}^{q}(-1)^p2^{p-q}(q-p)!\binom{q}{p}\binom{p}{q-p}P(p,k)\Biggr] \biggl[\frac{\pi}{4}+T(k)\biggr]\\
&=\frac{\pi}{4}\sum_{k=0}^{q} \Biggl[\sum_{p=k}^{q}(-1)^p2^{p-q}(q-p)!\binom{q}{p}\binom{p}{q-p}P(p,k)\Biggr]\\
&\quad+\sum_{k=0}^{q} \Biggl[\sum_{p=k}^{q}(-1)^p2^{p-q}(q-p)!\binom{q}{p}\binom{p}{q-p}P(p,k)\Biggr]T(k)
\end{align*}
for $q\in\mathbb{N}_0$. Further equating the rational and irrational parts on both sides yields
\begin{equation*}
\sum_{k=0}^{q}\sum_{p=k}^{q}(-1)^p2^{p-q}(q-p)!\binom{q}{p}\binom{p}{q-p}P(p,k)
=(-1)^qq!
\end{equation*}
and
\begin{equation*}
\sum_{k=0}^{q} \Biggl[\sum_{p=k}^{q}(-1)^p2^{p-q}(q-p)!\binom{q}{p}\binom{p}{q-p}P(p,k)\Biggr]T(k)
=(-1)^qq!T(q)
\end{equation*}
for $q\in\mathbb{N}_0$. Replacing $q$ by $n$ and simplifying result in
\begin{equation}\label{Curious-id-ID-O}
\sum_{k=0}^{n}\sum_{\ell=k}^{n}\frac{(-2)^{\ell}}{\ell!}\binom{\ell}{n-\ell}P(\ell,k)
=\sum_{\ell=0}^{n}\binom{\ell}{n-\ell}\frac{(-2)^{\ell}}{\ell!}\sum_{k=0}^{\ell}P(\ell,k)
=(-2)^n
\end{equation}
and
\begin{equation}\label{Curious-id-ID-T}
\begin{aligned}
\sum_{k=0}^{n} \Biggl[\sum_{\ell=k}^{n}\binom{\ell}{n-\ell}\frac{(-2)^{\ell}}{\ell!}P(\ell,k)\Biggr]T(k)
&=\sum_{\ell=0}^{n}\binom{\ell}{n-\ell} \frac{(-2)^{\ell}}{\ell!}\sum_{k=0}^{\ell}P(\ell,k)T(k)\\
&=(-2)^nT(n)
\end{aligned}
\end{equation}
for $n\in\mathbb{N}_0$.
\par
For $k\in\mathbb{N}$, let $s_k$ and $S_k$ be two sequences independent of $n$ such that $n\ge k\in\mathbb{N}$. The inversion theorem in~\cite[p.~528, Theorem~4.4]{AADM-3031.tex} reads that
\begin{equation}\label{Qi-Zou-Guo-Inversion-S}
(-1)^nnS_n=\sum_{k=1}^{n}\binom{2n-k-1}{n-1}(-1)^kks_k
\end{equation}
if and only if
\begin{equation}\label{Qi-Zou-Guo-Inversion-s}
s_n=\sum_{k=1}^{n}\binom{k}{n-k}S_k
\end{equation}
Applying the inversion theorem in~\eqref{Qi-Zou-Guo-Inversion-S} and~\eqref{Qi-Zou-Guo-Inversion-s} to the second identities in~\eqref{Curious-id-ID-O} and~\eqref{Curious-id-ID-T}, respectively, we acquire
\begin{equation*}
\sum_{\ell=0}^{n}P(n,\ell)=\frac{(n-1)!}{2^{n}}\sum_{k=1}^{n}\binom{2n-k-1}{n-1}2^kk
=\frac{(n-1)!}{2^{n}}\binom{2n}{n}n=(2n-1)!!
\end{equation*}
and
\begin{equation*}
(-1)^nn \frac{(-2)^{n}}{n!}\sum_{\ell=0}^{n}P(n,\ell)T(\ell)=\sum_{k=1}^{n} \binom{2n-k-1}{n-1}(-1)^k k(-2)^kT(k)
\end{equation*}
for $n\in\mathbb{N}$, where we used the identity~\eqref{sum-central-binom-ell-eq} for $\ell=0$ in Lemma~\ref{sum-central-binom-lem-ell}.
In a word, the identities
\begin{equation}\label{Curious-id-inv-O}
\sum_{k=0}^{n}P(n,k)=(2n-1)!!
\end{equation}
and
\begin{equation}\label{Curious-id-inv-T}
\sum_{k=0}^{n}P(n,k)T(k)=\frac{(n-1)!}{2^{n}}\sum_{k=1}^{n} \binom{2n-k-1}{n-1}k2^kT(k)
\end{equation}
are valid for $n\in\mathbb{N}_0$.
Applying~\eqref{T(n)-dfn-notation} to~\eqref{Curious-id-inv-T} gives
\begin{equation}\label{Curious-id-inv-TT}
\begin{aligned}
\sum_{\ell=0}^{n}P(n,\ell)T(\ell)
&=\frac{(n-1)!}{2^{n}}\sum_{\ell=1}^{n}\Biggl[\sum_{k=\ell}^{n} \binom{2n-k-1}{n-1}2^kk\Biggr] \frac{(-1)^{\ell}}{\ell} \frac{1}{2^{\ell/2}}\sin\frac{3\ell\pi}{4}\\
&=\frac{(n-1)!}{2^{n}}\sum_{\ell=1}^{n}\binom{2n-\ell}{n}2^\ell n \frac{(-1)^{\ell}}{\ell} \frac{1}{2^{\ell/2}} \sin\frac{3\ell\pi}{4}\\
&=\frac{n!}{2^{n}}\sum_{k=1}^{n}(-1)^{k}\binom{2n-k}{n}\frac{2^{k/2}}{k} \sin\frac{3k\pi}{4},
\end{aligned}
\end{equation}
where we used the identity~\eqref{sum-central-binom-ell-eq} for $\ell\ge1$ in Lemma~\ref{sum-central-binom-lem-ell}.
\par
Substituting the identities~\eqref{Curious-id-inv-O} and~\eqref{Curious-id-inv-TT} into~\eqref{arctan-sqrt-ser-expan-eq} yields the series expansion~\eqref{arctan-sqrt-expan-eq}.
The proof of Theorem~\ref{arctan-sqrt-ser-expan-thm} is complete.
\end{proof}

\begin{rem}
Setting $z=\frac{1}{3}$ on both sides of the series expansion~\eqref{arctan-sqrt-expan-eq} in Theorem~\ref{arctan-sqrt-ser-expan-thm} and simplifying lead to a series representation
\begin{equation*}
\pi=4\sqrt{3}\,\sum_{n=0}^{\infty}\frac{1}{6^n}\sum_{k=1}^{n} (-1)^{k+1} \binom{2n-k}{n} \frac{2^{k/2}}{k}\sin\frac{3k\pi}{4}.
\end{equation*}
\end{rem}

\begin{rem}
Let $s_n=(-2)^n$ and
\begin{equation*}
S_n=(-1)^n2\binom{2n-1}{n}
\end{equation*}
for $n\in\mathbb{N}$. Applying these two sequences to the inversion theorem recited in~\eqref{Qi-Zou-Guo-Inversion-S} and~\eqref{Qi-Zou-Guo-Inversion-s} and considering the identity~\eqref{sum-central-binom-ell-eq} for $\ell=1$ lead to a combinatorial identity
\begin{equation}\label{sine-sum-id-one}
\sum_{k=1}^{n}(-1)^k\binom{k}{n-k}\binom{2k-1}{k}=(-1)^n2^{n-1}, \quad n\in\mathbb{N}.
\end{equation}
This identity has been announced at the web site \url{https://mathoverflow.net/a/405812/}.
\par
Let $S_n=(-2)^n$ and
\begin{equation*}
s_n=2^{(n+1)/2}\sin\frac{3(n+1)\pi}{4}
\end{equation*}
for $n\in\mathbb{N}$. Applying these two sequence to the inversion theorem restated in~\eqref{Qi-Zou-Guo-Inversion-S} and~\eqref{Qi-Zou-Guo-Inversion-s} and utilizing the identity~\eqref{Cos-sin=comb-ID} reveal
\begin{equation}\label{sine-sum-id-two}
\sum_{k=1}^{n}(-1)^k\binom{2n-k-1}{n-1}2^{(k+1)/2}k\sin\frac{3(k+1)\pi}{4}=2^nn, \quad n\in\mathbb{N}.
\end{equation}
The identities~\eqref{sine-sum-id-one} and~\eqref{sine-sum-id-two} have been recovered and generalized in~\cite[Section~4]{2nd-Bell-Polyn-factoria-vl.tex}.
\end{rem}

\begin{rem}
Among other things, the identity~\eqref{Curious-id-inv-O} was also derived in~\cite[Theorem~4.2]{2nd-Bell-Polyn-factoria-vl.tex} and in~\cite[Remark~5.3]{Li-Qi-Authorea-2020}.
\end{rem}

\begin{thm}\label{sqrt-exp-wilf-ser-thm}
The function $\sqrt{2\te^{-z}-1}\,$ has the Maclaurin power series expansion
\begin{equation}\label{sqrt-exp-wilf-ser-eq}
\sqrt{2\te^{-z}-1}\, =\sum_{n=0}^{\infty}(-1)^{n+1} \Biggl[\sum_{k=0}^{n} (-1)^{k}S(n,k)(2k-3)!!\Biggr] \frac{z^n}{n!}, \quad |z|<\ln2,
\end{equation}
where $S(n,k)$ denotes the Stirling numbers of the second kind in~\eqref{Stirling-Number-dfn} and the double factorial of negative odd integers $-(2k+1)$ for $k\in\mathbb{N}_0$ is defined by~\eqref{double-factorial-neg-odd-integer}.
\end{thm}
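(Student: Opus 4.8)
The plan is to treat $\sqrt{2\te^{-z}-1}\,$ as the composition $f\circ h$ of the outer function $f(w)=\sqrt{w}\,=w^{1/2}$ with the inner function $h(z)=2\te^{-z}-1$, noting that $h(0)=1$, so that the composition is analytic near the origin and its Maclaurin coefficients are $\frac{1}{n!}\bigl(\sqrt{2\te^{-z}-1}\,\bigr)^{(n)}\big|_{z=0}$. First I would record the two families of derivatives that feed the Fa\`a di Bruno formula~\eqref{Bruno-Bell-Polynomial}: on the outer side $f^{(k)}(w)=\langle\frac12\rangle_kw^{1/2-k}$, hence $f^{(k)}(1)=\langle\frac12\rangle_k$; on the inner side $h^{(j)}(z)=(-1)^j2\te^{-z}$ for $j\ge1$, hence $h^{(j)}(0)=(-1)^j2$.

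Next I would apply~\eqref{Bruno-Bell-Polynomial} at $z=0$ to get $\bigl(\sqrt{2\te^{-z}-1}\,\bigr)^{(n)}\big|_{z=0}=\sum_{k=0}^{n}\langle\frac12\rangle_k\,\bell_{n,k}\bigl(-2,2,-2,\dotsc,(-1)^{n-k+1}2\bigr)$ and then collapse the Bell polynomial by the homogeneity identity~\eqref{Bell(n-k)}. Choosing $\alpha=2$ and $\beta=-1$ so that $\alpha\beta^j$ reproduces the argument $(-1)^j2$ in the $j$th slot, identity~\eqref{Bell(n-k)} together with~\eqref{Bell-stirling} yields $\bell_{n,k}\bigl(-2,2,\dotsc,(-1)^{n-k+1}2\bigr)=2^k(-1)^nS(n,k)$, so that the $n$th derivative at the origin equals $(-1)^n\sum_{k=0}^{n}\langle\frac12\rangle_k2^kS(n,k)$.

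The one genuinely computational step is to identify $\langle\frac12\rangle_k2^k$ with a double factorial. Writing $\langle\frac12\rangle_k2^k=\prod_{j=0}^{k-1}(1-2j)=1\cdot(-1)\cdot(-3)\dotsm(3-2k)$ and counting the negative factors $1-2j$ for $j=1,\dotsc,k-1$ yields $\langle\frac12\rangle_k2^k=(-1)^{k-1}(2k-3)!!=-(-1)^k(2k-3)!!$, where the edge cases $k=0,1$ are reconciled by the conventions $(-1)!!=1$ and $(-3)!!=-1$ recorded in~\eqref{double-factorial-neg-odd-integer}. Substituting this back and dividing by $n!$ produces precisely the coefficient $\frac{(-1)^{n+1}}{n!}\sum_{k=0}^{n}(-1)^kS(n,k)(2k-3)!!$ of~\eqref{sqrt-exp-wilf-ser-eq}. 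Finally I would pin down the disc $|z|<\ln2$ by locating the singularities of $\sqrt{2\te^{-z}-1}\,$: they are the branch points where $2\te^{-z}-1=0$, i.e.\ $z=\ln2+2k\pi\ti$ for $k\in\mathbb{Z}$, of which $z=\ln2$ is nearest to the origin, so the radius of convergence equals $\ln2$, consistent with the analyticity of $W(z)$ on $|z|<\ln2$ observed earlier.

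I expect the main obstacle to be bookkeeping rather than anything conceptual: keeping the three interlocking sign factors straight (the $(-1)^n$ from the homogeneity reduction, the alternating $(-1)^k$ hidden inside $\langle\frac12\rangle_k2^k$, and the overall $(-1)^{n+1}$ in the final formula) and invoking the negative-odd double-factorial convention so that the low-index terms $k=0,1$ come out correctly. A slightly shorter alternative would bypass Fa\`a di Bruno entirely: expand $\sqrt{1+2(\te^{-z}-1)}=-\sum_{k\ge0}(-1)^k(2k-3)!!\frac{(\te^{-z}-1)^k}{k!}$ by the binomial series, then insert the generating function~\eqref{2stirl-gen-f} with $x=-z$ to rewrite $\frac{(\te^{-z}-1)^k}{k!}=\sum_{n\ge k}(-1)^nS(n,k)\frac{z^n}{n!}$; interchanging the two sums reproduces~\eqref{sqrt-exp-wilf-ser-eq}, at the price of justifying the rearrangement on a neighbourhood of the origin and extending the identity to $|z|<\ln2$ by analyticity.
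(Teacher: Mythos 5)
Your proposal is correct and follows essentially the same route as the paper's own proof: Fa\`a di Bruno's formula~\eqref{Bruno-Bell-Polynomial} with outer function $\sqrt{w}\,$ and inner function $g(z)=2\te^{-z}-1$, collapse of $\bell_{n,k}\bigl(-2,2,\dotsc,(-1)^{n-k+1}2\bigr)$ to $(-1)^n2^kS(n,k)$ via the identities~\eqref{Bell(n-k)} and~\eqref{Bell-stirling}, and the sign-careful identification $\bigl\langle\frac12\bigr\rangle_k2^k=(-1)^{k-1}(2k-3)!!$, the only cosmetic difference being that the paper computes the $n$th derivative for general $z$ and lets $z\to0$ while you evaluate at $z=0$ directly. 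Your explicit location of the branch points $z=\ln2+2k\pi\ti$ to justify the radius $\ln2$, and the sketched binomial-series alternative via~\eqref{2stirl-gen-f}, are correct additions that the paper leaves implicit but do not constitute a different method.
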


\begin{proof}
Let $w=g(z)=2\te^{-z}-1$. Then
\begin{equation*}
\sqrt{w}\,=\sqrt{g(z)}\,=\sqrt{2\te^{-z}-1}\,\to1, \quad z\to0.
\end{equation*}
By the Fa\`a di Bruno formula~\eqref{Bruno-Bell-Polynomial}, we acquire
\begin{align*}
\bigl(\sqrt{2\te^{-z}-1}\,\bigl)^{(n)}&=\sum_{k=0}^{n}\bigl(\sqrt{w}\,\bigr)^{(k)} \bell_{n,k}\bigl(g'(z),g''(z),\dotsc,g^{(n-k+1)}(z)\bigr)\\
&=\sum_{k=0}^{n}\biggl\langle\frac{1}{2}\biggr\rangle_{k}w^{1/2-k} \bell_{n,k}\bigl(-2\te^{-z}, 2\te^{-z}, \dotsc, (-1)^{n-k+1}2\te^{-z}\bigr)\\
&=\sum_{k=0}^{n}\biggl\langle\frac{1}{2}\biggr\rangle_{k}\bigl(2\te^{-z}-1\bigr)^{1/2-k} (-1)^n\bigl(2\te^{-z}\bigr)^k
\bell_{n,k}(\underbrace{1, 1, \dotsc, 1}_{n-k+1})\\
&=(-1)^n\sum_{k=0}^{n}S(n,k) (-1)^{k-1}(2k-3)!! \bigl(2\te^{-z}-1\bigr)^{1/2-k} \te^{-k z}\\
&\to(-1)^n\sum_{k=0}^{n} (-1)^{k-1}S(n,k)(2k-3)!!
\end{align*}
as $z\to0$, where we employed the identities~\eqref{Bell(n-k)} and~\eqref{Bell-stirling}. The series expansion~\eqref{sqrt-exp-wilf-ser-eq} is thus proved. The proof of Theorem~\ref{sqrt-exp-wilf-ser-thm} is complete.
\end{proof}

\begin{thm}\label{sqrt-g(x)=0-bell-thm}
For $n\ge k\in\mathbb{N}_0$, the Bell polynomials of the second kind $\bell_{n,k}$ satisfy
\begin{multline}\label{bell-polyn-sqrt-exp}
\bell_{n,k}\Biggl(1, 0, 1, 3, 16, 105, \dotsc, (-1)^{n-k}\sum_{\ell=0}^{n-k+1} (-1)^{\ell-1}S(n-k+1,\ell)(2\ell-3)!!\Biggr)\\
=\frac{(-1)^{n}}{k!}\sum_{\ell=k}^{n}(-1)^{\ell}S(n,\ell)P(\ell,k),
\end{multline}
where $P(\ell,k)$ is given by~\eqref{P(n-k)-dfn-Notation} and the Stirling numbers of the second kind $S(n,\ell)$ are generated by~\eqref{2stirl-gen-f}.
\end{thm}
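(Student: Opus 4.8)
The plan is to recognize the arguments of the Bell polynomial on the left-hand side of~\eqref{bell-polyn-sqrt-exp} as the negatives of the Maclaurin coefficients of $G(z)=\sqrt{2\te^{-z}-1}\,$ and then to read off $\bell_{n,k}$ as a power-series coefficient. By Theorem~\ref{sqrt-exp-wilf-ser-thm} we have $G^{(j)}(0)=(-1)^{j}\sum_{\ell=0}^{j}(-1)^{\ell-1}S(j,\ell)(2\ell-3)!!$, and a direct comparison shows that the $j$th entry $x_j=(-1)^{j-1}\sum_{\ell=0}^{j}(-1)^{\ell-1}S(j,\ell)(2\ell-3)!!$ of the argument vector equals $-G^{(j)}(0)$; the listed values $1,0,1,3,16,105,\dotsc$ are exactly $-G'(0),-G''(0),-G'''(0),\dotsc$. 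Hence, by the scaling identity~\eqref{Bell(n-k)} with $\alpha=-1$ and $\beta=1$,
\[
\bell_{n,k}(x_1,\dotsc,x_{n-k+1})=(-1)^{k}\bell_{n,k}\bigl(G'(0),G''(0),\dotsc,G^{(n-k+1)}(0)\bigr),
\]
so it suffices to evaluate the Bell polynomial in the genuine derivatives of $G$.

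First I would apply the power-series identity~\eqref{113-final-formula} to the Maclaurin series $G(z)-1=\sum_{m=1}^{+\infty}G^{(m)}(0)\frac{z^m}{m!}$ (note $G(0)=1$), which gives
\[
\frac{1}{k!}\bigl(\sqrt{2\te^{-z}-1}\,-1\bigr)^{k}=\sum_{n=k}^{+\infty}\bell_{n,k}\bigl(G'(0),\dotsc,G^{(n-k+1)}(0)\bigr)\frac{z^n}{n!};
\]
thus the target Bell polynomial is $\frac{n!}{k!}$ times the coefficient of $z^n$ in $\bigl(\sqrt{2\te^{-z}-1}\,-1\bigr)^{k}$. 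The whole computation now rests on expanding this $k$th power, and the key observation is that $\sqrt{2\te^{-z}-1}\,-1=\sqrt{1+2s}\,-1$ under the substitution $s=\te^{-z}-1$, that is, $G-1$ is the composition of $s\mapsto\sqrt{1+2s}\,-1$ with $z\mapsto\te^{-z}-1$.

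Next I would expand the outer layer. Since $\sqrt{1+2s}\,-1=\sum_{m=1}^{+\infty}\bigl(2^{m}\langle\frac12\rangle_m\bigr)\frac{s^m}{m!}$ by the binomial series, a second use of~\eqref{113-final-formula} together with the scaling~\eqref{Bell(n-k)} (with $\alpha=1$, $\beta=2$) and the half-argument evaluation~\eqref{Bell-Polyn-Half} (rewritten through the definition~\eqref{P(n-k)-dfn-Notation} as $\bell_{\ell,k}(\langle\frac12\rangle_1,\dotsc)=\frac{(-1)^{\ell+k}}{2^{\ell}k!}P(\ell,k)$) yields
\[
\bigl(\sqrt{1+2s}\,-1\bigr)^{k}=\sum_{\ell=k}^{+\infty}\frac{(-1)^{\ell+k}}{\ell!}P(\ell,k)\,s^{\ell}.
\]
Then I would substitute back $s^{\ell}=(\te^{-z}-1)^{\ell}$ and invoke the Stirling generating function~\eqref{2stirl-gen-f} in the form $(\te^{-z}-1)^{\ell}=\ell!\sum_{n=\ell}^{+\infty}(-1)^{n}S(n,\ell)\frac{z^n}{n!}$, so that after interchanging the two summations (legitimate for $|z|<\ln2$),
\[
\bigl(\sqrt{2\te^{-z}-1}\,-1\bigr)^{k}=\sum_{n=k}^{+\infty}(-1)^{n+k}\Biggl[\sum_{\ell=k}^{n}(-1)^{\ell}S(n,\ell)P(\ell,k)\Biggr]\frac{z^n}{n!}.
\]
Reading off the coefficient of $\frac{z^n}{n!}$ gives $\bell_{n,k}(G'(0),\dotsc)=\frac{(-1)^{n+k}}{k!}\sum_{\ell=k}^{n}(-1)^{\ell}S(n,\ell)P(\ell,k)$, and multiplying by the factor $(-1)^{k}$ from the first paragraph produces exactly~\eqref{bell-polyn-sqrt-exp}.

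I expect the main obstacle to be the careful bookkeeping rather than any single hard estimate: correctly matching the argument vector to $-G^{(j)}(0)$, fixing the parity $\alpha^{k}\beta^{n}$ in the scaling step, and keeping track of the three intertwined sign factors $(-1)^{\ell+k}$, $(-1)^{n}$, and $(-1)^{k}$ produced by the two applications of~\eqref{113-final-formula}, the half-argument identity~\eqref{Bell-Polyn-Half}, and the expansion of $(\te^{-z}-1)^{\ell}$. The conceptual content is simply that $\bell_{n,k}$ of the derivatives of a composite factors through the two layers---here $\sqrt{1+2s}\,-1$ and $\te^{-z}-1$---but since the paper provides no standalone composition rule for Bell polynomials, I would realize this factorization concretely through the substitution $s=\te^{-z}-1$ and the generating functions above.
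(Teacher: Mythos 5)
Your proposal is correct, and all the sign bookkeeping checks out: the argument vector is indeed $-G^{(j)}(0)$ with $G(z)=\sqrt{2\te^{-z}-1}\,$, the expansion $\bigl(\sqrt{1+2s}\,-1\bigr)^{k}=\sum_{\ell\ge k}\frac{(-1)^{\ell+k}}{\ell!}P(\ell,k)s^{\ell}$ follows from~\eqref{113-final-formula}, \eqref{Bell(n-k)} with $\beta=2$, and~\eqref{Bell-Polyn-Half}, and the substitution $s=\te^{-z}-1$ together with~\eqref{2stirl-gen-f} delivers exactly the right-hand side of~\eqref{bell-polyn-sqrt-exp}. However, your route differs genuinely from the paper's. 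The paper also starts from~\eqref{113-final-formula} (in the rearranged form with $\bigl[\frac{\sqrt{g(z)}\,-1}{z}\bigr]^k$), but it then expands the $k$th power by the finite binomial theorem into $\sum_{\ell=0}^{k}(-1)^{k-\ell}\binom{k}{\ell}g^{\ell/2}(z)$, expands each fractional power $g^{\ell/2}=\sum_q\binom{\ell/2}{q}\bigl(2\te^{-z}-2\bigr)^q$, and lands on the intermediate identity~\eqref{bell-polyn-sqrt-exp-No}, whose inner alternating sum $\sum_{m=0}^{k}(-1)^m\binom{k}{m}\binom{m/2}{\ell}$ must then be evaluated by Lemma~\ref{Stack-Quest-Answ-lem} — a lemma whose own proof routes through~\eqref{Bell-fall-Eq}, Stirling numbers of the first kind, and~\eqref{Bell-Polyn-Half}. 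You bypass Lemma~\ref{Stack-Quest-Answ-lem} and the half-integer binomial sums entirely by factoring $G-1$ through the two layers $s\mapsto\sqrt{1+2s}\,-1$ and $z\mapsto\te^{-z}-1$ and applying~\eqref{113-final-formula} twice; a pleasant bonus is that the truncation of the sum to $\ell\ge k$ is automatic in your argument (the inner series starts at $\ell=k$), whereas the paper obtains it from the vanishing case $k>\ell$ of the lemma. What the paper's longer route buys is the standalone formula~\eqref{bell-polyn-sqrt-exp-No} and a use case for Lemma~\ref{Stack-Quest-Answ-lem}; what yours buys is directness, since you invoke~\eqref{Bell-Polyn-Half} once and explicitly rather than indirectly through the lemma. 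One cosmetic remark: you do not actually need convergence on all of $|z|<\ln2$ for the interchange of summations — since the inner series $\te^{-z}-1$ has zero constant term, the composition and rearrangement are valid as formal power series, and coefficient identification in any neighborhood of the origin suffices.
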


\begin{proof}
Making use of the formula~\eqref{113-final-formula} yields
\begin{equation}\label{Bell-Polyn-final-formula}
\bell_{n+k,k}(x_1,x_2,\dotsc,x_{n+1})
=\binom{n+k}{k}\lim_{z\to0}\frac{\operatorname{d}^{n}}{\td z^{n}}\Biggl[\sum_{m=0}^{\infty} \frac{x_{m+1}}{(m+1)!}z^{m}\Biggr]^k.
\end{equation}
Setting $x_m=\bigl[\sqrt{g(z)}\,\bigr]^{(m)}\big|_{z=0}=\bigl(\sqrt{2\te^{-z}-1}\,\bigr)^{(m)}\big|_{z=0}$ for $m\in\mathbb{N}$ in~\eqref{Bell-Polyn-final-formula} leads to
\begin{align*}
&\quad\bell_{n+k,k}\Bigl(\bigl[\sqrt{g(z)}\,\bigr]'\big|_{z=0}, \bigl[\sqrt{g(z)}\,\bigr]''\big|_{z=0}, \dotsc,\bigl[\sqrt{g(z)}\,\bigr]^{(n+1)}\big|_{z=0}\Bigr)\\
&=\binom{n+k}{k}\lim_{z\to0}\frac{\operatorname{d}^{n}}{\td z^{n}}\Biggl[\sum_{m=0}^{\infty} \frac{\bigl[\sqrt{g(z)}\,\bigr]^{(m+1)}\big|_{z=0}}{(m+1)!}z^{m}\Biggr]^k\\
&=\binom{n+k}{k}\lim_{z\to0}\frac{\operatorname{d}^{n}}{\td z^{n}}\Biggl[\frac{1}{z}\sum_{m=1}^{\infty} \frac{\bigl[\sqrt{g(z)}\,\bigr]^{(m)}\big|_{z=0}}{m!}z^{m}\Biggr]^k\\
&=\binom{n+k}{k}\lim_{z\to0}\frac{\operatorname{d}^{n}}{\td z^{n}}\Biggl[\frac{\sqrt{g(z)}\,-1}{z}\Biggr]^k\\
&=\binom{n+k}{k}\lim_{z\to0}\frac{\operatorname{d}^{n}}{\td z^{n}}\Biggl[\frac{1}{z^k}\sum_{\ell=0}^{k}(-1)^{k-\ell}\binom{k}{\ell}g^{\ell/2}(z)\Biggr]\\
&=\binom{n+k}{k}\lim_{z\to0}\frac{\operatorname{d}^{n}}{\td z^{n}} \Biggl[\frac{1}{z^k}\sum_{\ell=0}^{k}(-1)^{k-\ell} \binom{k}{\ell}\bigl(2\te^{-z}-1\bigr)^{\ell/2}\Biggr]\\
&=\binom{n+k}{k}\lim_{z\to0}\frac{\operatorname{d}^{n}}{\td z^{n}} \Biggl[\frac{1}{z^k}\sum_{\ell=0}^{k}(-1)^{k-\ell} \binom{k}{\ell}\sum_{q=0}^{\infty}\binom{\ell/2}{q}\bigl(2\te^{-z}-2\bigr)^{q}\Biggr]\\
&=\binom{n+k}{k}\lim_{z\to0}\frac{\operatorname{d}^{n}}{\td z^{n}} \Biggl(\frac{1}{z^k} \sum_{\ell=0}^{k}(-1)^{k-\ell} \binom{k}{\ell}\sum_{q=0}^{\infty}(2q)!!\binom{\ell/2}{q} \sum_{p=q}^{\infty}(-1)^p S(p,q)\frac{z^p}{p!}\Biggr)\\
&=\binom{n+k}{k}\lim_{z\to0}\frac{\operatorname{d}^{n}}{\td z^{n}} \sum_{\ell=0}^{k}(-1)^{k-\ell} \binom{k}{\ell}\sum_{p=0}^{\infty}(-1)^p \Biggl[\sum_{q=0}^{p}(2q)!!\binom{\ell/2}{q} S(p,q)\Biggr]\frac{z^{p-k}}{p!}\\
&=\binom{n+k}{k} \sum_{\ell=0}^{k}(-1)^{k-\ell} \binom{k}{\ell}\sum_{p=0}^{\infty}(-1)^p \Biggl[\sum_{q=0}^{p}(2q)!!\binom{\ell/2}{q} S(p,q)\Biggr]\langle p-k\rangle_n\\
&\quad\times\lim_{z\to0}\frac{z^{p-k-n}}{p!}\\
&=\binom{n+k}{k} (-1)^{k}\sum_{p=0}^{\infty}(-1)^p \langle p-k\rangle_n\Biggl[\sum_{\ell=0}^{k}(-1)^{\ell} \binom{k}{\ell} \sum_{q=0}^{p}(2q)!!\binom{\ell/2}{q} S(p,q)\Biggr]\\
&\quad\times\lim_{z\to0}\frac{z^{p-k-n}}{p!},
\end{align*}
where we used the generating function~\eqref{2stirl-gen-f}. Accordingly, we acquire
\begin{multline*}
\bell_{n+k,k}\Bigl(\bigl[\sqrt{g(z)}\,\bigr]'\big|_{z=0}, \bigl[\sqrt{g(z)}\,\bigr]''\big|_{z=0}, \dotsc,\bigl[\sqrt{g(z)}\,\bigr]^{(n+1)}\big|_{z=0}\Bigr)\\
=\frac{(-1)^{n}}{k!} \sum_{\ell=0}^{k}(-1)^{\ell} \binom{k}{\ell} \Biggl[\sum_{q=0}^{n+k}(2q)!!\binom{\ell/2}{q} S(n+k,q)\Biggr]
\end{multline*}
for $n\ge k\in\mathbb{N}_0$. Replacing $n$ by $n-k$ reduces to
\begin{multline*}
\bell_{n,k}\Bigl(\bigl[\sqrt{g(z)}\,\bigr]'\big|_{z=0}, \bigl[\sqrt{g(z)}\,\bigr]''\big|_{z=0}, \dotsc, \bigl[\sqrt{g(z)}\,\bigr]^{(n-k+1)}\big|_{z=0}\Bigr)\\
=\frac{(-1)^{n-k}}{k!} \sum_{\ell=0}^{k}(-1)^{\ell} \binom{k}{\ell} \Biggl[\sum_{q=0}^{n}S(n,q)(2q)!!\binom{\ell/2}{q}\Biggr].
\end{multline*}
The series expansion~\eqref{sqrt-exp-wilf-ser-eq} in Theorem~\ref{sqrt-exp-wilf-ser-thm} implies that
\begin{equation*}
\bigl[\sqrt{g(z)}\,\bigr]^{(n)}\big|_{z=0}
=(-1)^n\sum_{k=0}^{n} (-1)^{k-1}S(n,k)(2k-3)!!, \quad n\in\mathbb{N}.
\end{equation*}
Consequently, it follows that
\begin{multline*}
\bell_{n,k}\Biggl(-1, 0, -1, -3,\dotsc, (-1)^{n-k+1}\sum_{\ell=0}^{n-k+1} (-1)^{\ell-1}S(n-k+1,\ell)(2\ell-3)!!\Biggr)\\
=\frac{(-1)^{n-k}}{k!} \sum_{\ell=0}^{n}S(n,\ell)(2\ell)!!\sum_{m=0}^{k}(-1)^{m} \binom{k}{m}\binom{m/2}{\ell}.
\end{multline*}
Employing the identity~\eqref{Bell(n-k)} results in the closed-form formula
\begin{multline}\label{bell-polyn-sqrt-exp-No}
\bell_{n,k}\Biggl(1, 0, 1, 3, 16, 105, \dotsc, (-1)^{n-k}\sum_{\ell=0}^{n-k+1} (-1)^{\ell-1}S(n-k+1,\ell)(2\ell-3)!!\Biggr)\\
=\frac{(-1)^{n}}{k!} \sum_{\ell=0}^{n}S(n,\ell)(2\ell)!!\sum_{m=0}^{k}(-1)^{m} \binom{k}{m}\binom{m/2}{\ell}
\end{multline}
for $n\ge k\in\mathbb{N}_0$. Further considering the formula~\eqref{Stack-Quest-Answ-Eq} in Lemma~\ref{Stack-Quest-Answ-lem}, we can conclude~\eqref{bell-polyn-sqrt-exp} from~\eqref{bell-polyn-sqrt-exp-No}. The proof of Theorem~\ref{sqrt-g(x)=0-bell-thm} is complete.
\end{proof}

\begin{rem}
The closed-form formula~\eqref{bell-polyn-sqrt-exp} in Theorem~\ref{sqrt-g(x)=0-bell-thm} can be applied to establish the Maclaurin power series expansion of functions of the form $F\bigl(\sqrt{2\te^{-z}-1}\,\bigr)$, if all derivatives of the function $F(z)$ at $z=1$ is computable.
\end{rem}

\section{Maclaurin power series expansions of Wilf function}\label{sec-Maclaurin-Wilf-Ser}

In this section, by means of the Taylor power series expansions established in Section~\ref{arctan-taylor-maclaurin-sec} for the last two functions in~\eqref{four-arctangent}, with help of some preparations in Section~\ref{preparation-sec}, we find two forms for the Maclaurin power series expansion of the Wilf function $W(z)$ defined by~\eqref{Wilf-F-arctan} on the open disc centered at $z=0$ with a radius $\ln2$. We also obtain a closed-form formula of the Gauss hypergeometric function in~\eqref{2F1=nnn-z=-1}.

\begin{thm}\label{Wilf-F-Ser-Exapn-thm}
For $|z|<\ln2$, the Wilf function $W(z)$ has the Maclaurin power series expansion
\begin{equation}\label{Wilf-F-Ser-Exapn}
W(z)=\sum_{n=0}^{\infty}(-1)^n \Biggl[\sum_{k=0}^{n}(-1)^k S(n,k) \frac{(2k)!!}{2k+1} {}_2F_1\biggl(k+\frac{1}{2},k+1;k+\frac{3}{2};-1\biggr)\Biggr] \frac{z^n}{n!},
\end{equation}
where ${}_2F_1(\alpha,\beta;\gamma;z)$ is the Gauss hypergeometric function defined by~\eqref{Gauss-HF-dfn} and $S(n,k)$ stands for the Stirling numbers of the second kind in~\eqref{2stirl-gen-f}.
\end{thm}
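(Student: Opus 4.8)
The plan is to treat Wilf's function \eqref{Wilf-F-arctan} as the two-fold composition $W(z)=H(g(z))$, where $H(w)=\frac{\arctan\sqrt{w}\,}{\sqrt{w}\,}$ is exactly the function whose Taylor expansion at $w=1$ was obtained in Theorem~\ref{arctan-sqrt-ser-expan-thm}, and $g(z)=2\te^{-z}-1$ satisfies $g(0)=1$. Since $W$ is analytic on the disc $|z|<\ln2$ (as recorded in the Motivations), its Maclaurin coefficients are $a_n=\frac{W^{(n)}(0)}{n!}$, and I would compute these by applying the Fa\`a di Bruno formula \eqref{Bruno-Bell-Polynomial} to $H\circ g$, namely $W^{(n)}(z)=\sum_{k=0}^{n}H^{(k)}(g(z))\,\bell_{n,k}\bigl(g'(z),\dotsc,g^{(n-k+1)}(z)\bigr)$, and then letting $z\to0$.

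Next I would evaluate the Bell-polynomial factor at $z=0$. Because $g^{(m)}(z)=(-1)^m2\te^{-z}$, we have $g^{(m)}(0)=(-1)^m2$, and precisely as in the proof of Theorem~\ref{sqrt-exp-wilf-ser-thm} the homogeneity identity \eqref{Bell(n-k)} (with $\alpha=2$, $\beta=-1$) together with \eqref{Bell-stirling} gives $\bell_{n,k}\bigl(g'(0),\dotsc,g^{(n-k+1)}(0)\bigr)=(-1)^n2^kS(n,k)$. This already accounts for the outer factor $(-1)^n$ and the Stirling numbers in \eqref{Wilf-F-Ser-Exapn}, and reduces the whole problem to evaluating the inner derivatives $H^{(k)}(1)$ in closed form.

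The heart of the argument is this evaluation of $H^{(k)}(1)$, where I would pass to Gauss' hypergeometric function. Expanding $\arctan$ termwise gives $H(w)=\frac{\arctan\sqrt{w}\,}{\sqrt{w}\,}=\sum_{j=0}^{+\infty}\frac{(-w)^j}{2j+1}={}_2F_1\bigl(\frac12,1;\frac32;-w\bigr)$, using the telescoping simplification $\frac{(1/2)_j}{(3/2)_j}=\frac{1}{2j+1}$ in the series \eqref{Gauss-HF-dfn}. Differentiating the standard relation $\frac{\td^k}{\td w^k}{}_2F_1(\alpha,\beta;\gamma;-w)=(-1)^k\frac{(\alpha)_k(\beta)_k}{(\gamma)_k}{}_2F_1(\alpha+k,\beta+k;\gamma+k;-w)$ and setting $w=1$ yields $H^{(k)}(1)=(-1)^k\frac{(1/2)_k(1)_k}{(3/2)_k}{}_2F_1\bigl(k+\frac12,k+1;k+\frac32;-1\bigr)$, where the Pochhammer ratio collapses to $\frac{(1/2)_k\,k!}{(3/2)_k}=\frac{k!}{2k+1}$. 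Substituting this and the Bell-polynomial value into the Fa\`a di Bruno sum, dividing by $n!$, and using $2^kk!=(2k)!!$, I expect to land exactly on the series \eqref{Wilf-F-Ser-Exapn}.

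The main obstacle is the legitimacy of reading the hypergeometric representation at the argument $-1$: the defining series \eqref{Gauss-HF-dfn} for ${}_2F_1\bigl(k+\frac12,k+1;k+\frac32;-w\bigr)$ converges only for $|w|<1$, whereas I need its value together with all $k$ derivatives of $H$ at the boundary point $w=1$. I would resolve this by analytic continuation, exploiting facts already assembled in the excerpt: $H$ is analytic on the disc $|w-1|<1$ by Theorem~\ref{arctan-sqrt-ser-expan-thm}, the hypergeometric function continues analytically to $\mathbb{C}\setminus(1,+\infty)$, the two analytic functions agree on the overlapping disc $|w|<1$, and hence they agree—along with all their derivatives—at $w=1$, so that ${}_2F_1\bigl(k+\frac12,k+1;k+\frac32;-1\bigr)$ is to be understood as this continued value. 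As a consistency check I would verify the $n=0$ term: only $k=0$ survives (since $S(0,0)=1$), and it reduces to ${}_2F_1\bigl(\frac12,1;\frac32;-1\bigr)=\sum_{j\ge0}\frac{(-1)^j}{2j+1}=\frac{\pi}{4}$, which is indeed $a_0=W(0)=\frac{\arctan1}{1}$.
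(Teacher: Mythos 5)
Your proposal is correct, and at the outer level it follows the paper's own architecture exactly: the same decomposition $W=H\circ g$ with $H(w)=\frac{\arctan\sqrt{w}\,}{\sqrt{w}\,}$ and $g(z)=2\te^{-z}-1$, the same application of the Fa\`a di Bruno formula~\eqref{Bruno-Bell-Polynomial}, and the same evaluation $\bell_{n,k}\bigl(g'(0),\dotsc,g^{(n-k+1)}(0)\bigr)=(-1)^n2^kS(n,k)$ via~\eqref{Bell(n-k)} and~\eqref{Bell-stirling}. Where you genuinely differ is in the key step of evaluating $H^{(k)}(1)$. The paper expands $\frac{\arctan\sqrt{t}\,}{\sqrt{t}\,}=\sum_{k\ge0}\frac{(-1)^kt^k}{2k+1}$, writes $t^k=[(t-1)+1]^k$, interchanges summations to get $H^{(n)}(1)=n!\sum_{k\ge n}\frac{(-1)^k}{2k+1}\binom{k}{n}$, identifies that inner sum with $\frac{(-1)^nn!}{2n+1}{}_2F_1\bigl(n+\frac12,n+1;n+\frac32;-1\bigr)$, and then handles the constant term separately, using $\frac{\arctan z}{z}={}_2F_1\bigl(\frac12,1;\frac32;-z^2\bigr)$ together with Abel's limit theorem to obtain ${}_2F_1\bigl(\frac12,1;\frac32;-1\bigr)=\frac{\pi}{4}$. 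Your route---recognizing $H(w)={}_2F_1\bigl(\frac12,1;\frac32;-w\bigr)$ at the outset, applying the contiguous differentiation formula, and justifying the evaluation at $w=1$ by the identity theorem on the common domain of analyticity---actually buys something real: the paper's inner sums $\sum_{k\ge n}\frac{(-1)^k}{2k+1}\binom{k}{n}$ do not converge classically for $n\ge1$ (their terms do not even tend to zero), which is consistent with the paper's own remark that the series~\eqref{Gauss-HF-dfn} diverges on $|z|=1$ when $\Re(\alpha+\beta-\gamma)\ge1$ (here $\alpha+\beta-\gamma=k$); so that step of the paper must be read as an Abel-summed or analytically continued identity, whereas your explicit analytic-continuation interpretation of ${}_2F_1\bigl(k+\frac12,k+1;k+\frac32;-1\bigr)$ makes the coefficients in~\eqref{Wilf-F-Ser-Exapn} unambiguous and the derivation rigorous. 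Your uniform inclusion of the $k=0$ term (with the Leibniz value $\frac{\pi}{4}$ as a check, which is legitimate precisely at $k=0$ since there $\alpha+\beta-\gamma=0$ and the series converges conditionally at $z=-1$) also removes the paper's separate Abel-limit step.
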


\begin{proof}
It is well known~\cite[p.~81, 4.4.42]{abram} that
\begin{equation*}
\arctan z=\sum_{n=0}^{\infty}(-1)^{n}\frac{z^{2n+1}}{2n+1}, \quad |z|\le1,\quad z^2\ne-1.
\end{equation*}
From this, it follows that
\begin{equation}\label{arctan-sqrt-ser}
\frac{\arctan\sqrt{t}\,}{\sqrt{t}\,}=\sum_{n=0}^{\infty}(-1)^{n}\frac{t^{n}}{2n+1}, \quad 0\le t\le1.
\end{equation}
The series expansion~\eqref{arctan-sqrt-ser} can be reformulated as
\begin{align*}
\frac{\arctan\sqrt{t}\,}{\sqrt{t}\,}
&=\sum_{k=0}^{\infty}(-1)^{k}\frac{[(t-1)+1]^{k}}{2k+1}\\
&=\sum_{k=0}^{\infty}\frac{(-1)^{k}}{2k+1}\sum_{n=0}^{k}\binom{k}{n}(t-1)^n\\
&=\sum_{n=0}^{\infty}\Biggl[\sum_{k=n}^{\infty}\frac{(-1)^{k}}{2k+1}\binom{k}{n}\Biggr](t-1)^n
\end{align*}
for $0\le t\le1$. This means that
\begin{equation}\label{arctan-sqrt-lim-n-deriv-n}
\begin{split}
\lim_{t\to1^-}\frac{\operatorname{d}^n}{\td t^n}\biggl(\frac{\arctan\sqrt{t}\,}{\sqrt{t}\,}\biggr)
&=n!\sum_{k=n}^{\infty}\frac{(-1)^{k}}{2k+1}\binom{k}{n}\\
&=\frac{(-1)^nn!}{2n+1}{}_2F_1\biggl(n+\frac{1}{2},n+1;n+\frac{3}{2};-1\biggr)
\end{split}
\end{equation}
for $n\in\mathbb{N}_0$.
\par
Let
\begin{equation*}
H(z)=\frac{\arctan\sqrt{z}\,}{\sqrt{z}\,},\quad \Re(z)>0.
\end{equation*}
Then the Wilf function $W(z)$ can be regarded as the composite $H(g(z))$, where $w=g(z)=2\te^{-z}-1\to1$ as $z\to0$. Accordingly, from~\eqref{arctan-sqrt-lim-n-deriv-n}, by the Fa\`a di Bruno formula~\eqref{Bruno-Bell-Polynomial}, we drive
\begin{align*}
W^{(n)}(z)&=[H(g(z))]^{(n)}\\
&=\sum_{k=1}^{n}H^{(k)}(w) \bell_{n,k}\bigl(g'(z),g''(z),\dotsc, g^{(n-k+1)}(z)\bigr)\\
&\to\sum_{k=1}^{n}\lim_{w\to1}H^{(k)}(w) \bell_{n,k}\Bigl(\lim_{z\to0}g'(z), \lim_{z\to0}g''(z),\dotsc, \lim_{z\to0}g^{(n-k+1)}(z)\Bigr)\\
&=\sum_{k=1}^{n} \frac{(-1)^k k!}{2k+1}{}_2F_1\biggl(k+\frac{1}{2},k+1;k+\frac{3}{2};-1\biggr) \bell_{n,k}\bigl(-2,2,\dotsc,(-1)^{n-k+1}2\bigr)\\
&=\sum_{k=1}^{n} \frac{(-1)^k k!}{2k+1}{}_2F_1\biggl(k+\frac{1}{2},k+1;k+\frac{3}{2};-1\biggr)(-1)^n2^k \bell_{n,k}(1,1,\dotsc,1)\\
&=(-1)^n\sum_{k=1}^{n} (-1)^kS(n,k)\frac{(2k)!!}{2k+1}{}_2F_1\biggl(k+\frac{1}{2},k+1;k+\frac{3}{2};-1\biggr)
\end{align*}
as $z\to0$ for $n\in\mathbb{N}$, where we used the identities~\eqref{Bell(n-k)} and~\eqref{Bell-stirling}.
Consequently, the series expansion of the Wilf function $W(z)$ is
\begin{multline}\label{Wilf-F-Ser-Prev}
W(z)=\frac{\pi}{4} \\
+\sum_{n=1}^{\infty}(-1)^n\Biggl[\sum_{k=1}^{n} (-1)^kS(n,k)\frac{(2k)!!}{2k+1} {}_2F_1\biggl(k+\frac{1}{2},k+1;k+\frac{3}{2};-1\biggr)\Biggr] \frac{z^n}{n!}.
\end{multline}
\par
In~\cite[p.~109, Example~5.1]{Temme-96-book}, it is given that
\begin{equation*}
{}_2F_1\biggl(\frac{1}{2},1;\frac{3}{2};-z^2\biggr)
=\frac{\arctan z}{z}, \quad |z|<1.
\end{equation*}
By virtue of Abel's limit theorem~\cite[p.~245, Theorem~9.31]{Apostol-MA-1974}, we can take $z=1$ and obtain
\begin{equation*}
{}_2F_1\biggl(\frac{1}{2},1;\frac{3}{2};-1\biggr)
=\frac{\pi}{4}.
\end{equation*}
Combining this with~\eqref{Wilf-F-Ser-Prev} brings out the series expansion~\eqref{Wilf-F-Ser-Exapn}. The proof of Theorem~\ref{Wilf-F-Ser-Exapn-thm} is complete.
\end{proof}

\begin{cor}\label{Gauss-HF-Spec-Value-cor}
For $n\in\mathbb{N}_0$, the Gauss hypergeometric function ${}_2F_1(\alpha,\beta;\gamma;z)$ satisfies
\begin{multline}\label{Gauss-HF-Spec-Value}
{}_2F_1\biggl(n+\frac{1}{2},n+1;n+\frac{3}{2};-1\biggr)\\
=\frac{(2n+1)!!}{(2n)!!}\frac{\pi}{4}+\frac{2n+1}{2^{2n}}\sum_{k=1}^{n} (-1)^{k} \binom{2n-k}{n} \frac{2^{k/2}}{k}\sin\frac{3k\pi}{4}.
\end{multline}
\end{cor}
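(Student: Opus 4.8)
The plan is to evaluate the $n$th Taylor coefficient of $\frac{\arctan\sqrt{z}\,}{\sqrt{z}\,}$ at the centre $z=1$ in two independent ways and then equate the results, since the hypergeometric value has already been linked to this coefficient inside the proof of Theorem~\ref{Wilf-F-Ser-Exapn-thm}.

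First I would recall formula~\eqref{arctan-sqrt-lim-n-deriv-n}, established there, which reads
\begin{equation*}
\lim_{z\to1}\frac{\td^n}{\td z^n}\biggl(\frac{\arctan\sqrt{z}\,}{\sqrt{z}\,}\biggr)
=\frac{(-1)^nn!}{2n+1}\,{}_2F_1\biggl(n+\frac{1}{2},n+1;n+\frac{3}{2};-1\biggr), \quad n\in\mathbb{N}_0.
\end{equation*}
On the other hand, since the coefficient of $(z-1)^n$ in any Taylor series equals $\frac{1}{n!}$ times the $n$th derivative at the centre, Theorem~\ref{arctan-sqrt-ser-expan-thm} supplies the same limit directly from the explicit expansion~\eqref{arctan-sqrt-expan-eq}: the $n$th derivative is $n!$ times the displayed coefficient, namely
\begin{equation*}
\lim_{z\to1}\frac{\td^n}{\td z^n}\biggl(\frac{\arctan\sqrt{z}\,}{\sqrt{z}\,}\biggr)
=\frac{(-1)^n}{2^n}\Biggl[(2n-1)!!\frac{\pi}{4}+\frac{n!}{2^{n}}\sum_{k=1}^{n}(-1)^{k}\binom{2n-k}{n}\frac{2^{k/2}}{k}\sin\frac{3k\pi}{4}\Biggr].
\end{equation*}

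Next I would equate these two expressions, cancel the common factor $(-1)^n$, and solve the resulting linear relation for the hypergeometric value, obtaining
\begin{equation*}
{}_2F_1\biggl(n+\frac{1}{2},n+1;n+\frac{3}{2};-1\biggr)
=\frac{2n+1}{2^n n!}\Biggl[(2n-1)!!\frac{\pi}{4}+\frac{n!}{2^{n}}\sum_{k=1}^{n}(-1)^{k}\binom{2n-k}{n}\frac{2^{k/2}}{k}\sin\frac{3k\pi}{4}\Biggr].
\end{equation*}
The final step is a routine tidying of the two terms by means of the elementary identities $(2n+1)(2n-1)!!=(2n+1)!!$ and $2^nn!=(2n)!!$: the first term collapses to $\frac{(2n+1)!!}{(2n)!!}\frac{\pi}{4}$, while in the second term the factor $n!$ cancels and leaves the prefactor $\frac{2n+1}{2^{2n}}$ in front of the trigonometric sum, which is exactly the right-hand side of~\eqref{Gauss-HF-Spec-Value}.

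Because both ingredients are already in hand, there is no genuine conceptual obstacle here; the corollary is merely a comparison of two formulas for one and the same derivative. The only place that demands care is the bookkeeping in the last step, where one must correctly track the double-factorial and power-of-two factors and verify that the $n!$ in the numerator of the second term cancels cleanly against the $\frac{1}{n!}$ prefactor.
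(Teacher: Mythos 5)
Your proposal is correct and is essentially identical to the paper's own proof: the paper likewise extracts the limit $\lim_{z\to1}\bigl(\frac{\arctan\sqrt{z}\,}{\sqrt{z}\,}\bigr)^{(n)}$ from the expansion~\eqref{arctan-sqrt-expan-eq} of Theorem~\ref{arctan-sqrt-ser-expan-thm}, compares it with~\eqref{arctan-sqrt-lim-n-deriv-n} from the proof of Theorem~\ref{Wilf-F-Ser-Exapn-thm}, and simplifies. Your algebraic bookkeeping, using $(2n+1)(2n-1)!!=(2n+1)!!$ and $2^nn!=(2n)!!$, checks out exactly.
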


\begin{proof}
From the series expansion~\eqref{arctan-sqrt-expan-eq} in Theorem~\ref{arctan-sqrt-ser-expan-thm}, it follows that
\begin{multline}\label{arctan-square-n-deriv-z=1}
\lim_{z\to1}\biggl(\frac{\arctan\sqrt{z}\,}{\sqrt{z}\,}\biggr)^{(n)}\\
=\frac{(-1)^n}{2^n}\Biggl[\frac{(2n-1)!!\pi}{4}+\frac{n!}{2^{n}}\sum_{k=1}^{n}(-1)^{k} \binom{2n-k}{n} \frac{2^{k/2}}{k}\sin\frac{3k\pi}{4}\Biggr]
\end{multline}
for $n\in\mathbb{N}_0$. Comparing~\eqref{arctan-square-n-deriv-z=1} with~\eqref{arctan-sqrt-lim-n-deriv-n} and simplifying result in the explicit formula~\eqref{Gauss-HF-Spec-Value}. The proof of Corollary~\ref{Gauss-HF-Spec-Value-cor} is complete.
\end{proof}

\begin{rem}
The closed-form formula~\eqref{Gauss-HF-Spec-Value} in Corollary~\ref{Gauss-HF-Spec-Value-cor} can be reformulated as
\begin{multline}\label{Pi-2F1-expression}
\frac{\pi}{4}
=\frac{(2n)!!}{(2n+1)!!}{}_2F_1\biggl(n+\frac{1}{2},n+1;n+\frac{3}{2};-1\biggr)\\
-\frac{(n!)^2}{(2n)!}\sum_{k=1}^{n} (-1)^{k} \binom{2n-k}{n} \frac{2^{k/2}}{k}\sin\frac{3k\pi}{4}
\end{multline}
for all $n\in\mathbb{N}$. Since the quantities $2^{k/2}\sin\frac{3k\pi}{4}$ for $k\in\mathbb{N}$ are all integers and ${}_2F_1\bigl(n+\frac{1}{2},n+1;n+\frac{3}{2};-1\bigr)$ is an infinite series, we can regard~\eqref{Pi-2F1-expression} as a series representation of the circular constant $\pi$.
\end{rem}

\begin{rem}
In~\cite[Theorem~3]{axioms-2962911.tex}, Qi and his coauthor derived
\begin{equation*}
{\,}_2F_1\biggl(n+\frac{1}{2},n+1;n+\frac{3}{2};-z^2\biggr)
=P_n\bigl(z^2\bigr)\frac{\arctan z}{z} +Q_n\bigl(z^2\bigr)\frac{1}{1+z^2}
\end{equation*}
for $n\in\mathbb{N}_0$, where
\begin{equation*}
P_n(z)=\frac{(2n+1)!!}{(2n)!!}\frac{1}{z^n}, \quad n\in\mathbb{N}_0
\end{equation*}
and
\begin{equation*}
Q_n(z)=-\frac{P_n(z)}{(1+z)^{n-1}} \sum_{k=0}^{n-1}\Biggl[\sum_{j=0}^{k}\frac{(-1)^j}{2j+1}\binom{n}{k-j}\Biggr]z^k, \quad n\in\mathbb{N}_0.
\end{equation*}
In particular, in~\cite[Corollary~2]{axioms-2962911.tex}, Qi and his coauthor deduced
\begin{equation}\label{2F1-1-explicitO}
{\,}_2F_1\biggl(n+\frac{1}{2},n+1;n+\frac{3}{2};-1\biggr)
=\frac{(2n+1)!!}{(2n)!!}\Biggl[\frac{\pi}{4}-\frac{1}{2^{n}} \sum_{j=0}^{n-1}\frac{(-1)^j}{2j+1} \sum_{\ell=0}^{n-j-1}\binom{n}{\ell}\Biggr]
\end{equation}
for $n\in\mathbb{N}_0$. See also related texts in~\cite{FengQiF21(Axioms).tex}.
\par
Since the right hand side of~\eqref{2F1-1-explicitO} contains twice sums and the right hand side of~\eqref{Gauss-HF-Spec-Value} contains a single sum, we think that the formula~\eqref{Gauss-HF-Spec-Value} is better or simpler than the formula~\eqref{2F1-1-explicitO}.
\end{rem}

\begin{thm}\label{Wilf-F-Ser-final-thm}
For $|z|<\ln2$, the Wilf function $W(z)$ defined by~\eqref{Wilf-F-arctan} has the Maclaurin power series expansion
\begin{multline}\label{Wilf-F-Ser-final}
W(z)=\sum_{n=0}^{\infty}(-1)^n \Biggl[\frac{\pi}{4}\sum_{k=0}^{n}(-1)^k S(n,k)(2k-1)!!\\*
+\sum_{k=1}^{n}(-1)^k S(n,k) \frac{k!}{2^k}\sum_{\ell=1}^{k} (-1)^{\ell} \binom{2k-\ell}{k} \frac{2^{\ell/2}}{\ell} \sin\frac{3\ell\pi}{4}\Biggr] \frac{z^n}{n!},
\end{multline}
where $S(n,k)$ denotes the Stirling numbers of the second kind in~\eqref{Stirling-Number-dfn}.
\end{thm}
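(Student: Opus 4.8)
The plan is to derive the expansion~\eqref{Wilf-F-Ser-final} by substituting the closed-form value of Gauss' hypergeometric function supplied by Corollary~\ref{Gauss-HF-Spec-Value-cor} directly into Maclaurin's expansion~\eqref{Wilf-F-Ser-Exapn} already established in Theorem~\ref{Wilf-F-Ser-Exapn-thm}. Since both results are valid on the same disc $|z|<\ln2$, no fresh convergence analysis is needed; the task reduces to an algebraic simplification carried out term by term inside the coefficient bracket of~\eqref{Wilf-F-Ser-Exapn}.

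First I would replace $n$ by $k$ in formula~\eqref{Gauss-HF-Spec-Value} and multiply the result by the prefactor $\frac{(2k)!!}{2k+1}$ appearing in~\eqref{Wilf-F-Ser-Exapn}, obtaining
\[
\frac{(2k)!!}{2k+1}\,{}_2F_1\biggl(k+\frac{1}{2},k+1;k+\frac{3}{2};-1\biggr)
=\frac{(2k+1)!!}{2k+1}\frac{\pi}{4}
+\frac{(2k)!!}{2^{2k}}\sum_{\ell=1}^{k}(-1)^{\ell}\binom{2k-\ell}{k}\frac{2^{\ell/2}}{\ell}\sin\frac{3\ell\pi}{4}.
\]
The two elementary identities $(2k+1)!!=(2k+1)(2k-1)!!$ and $(2k)!!=2^kk!$ then collapse the $\pi$-proportional term to $(2k-1)!!\frac{\pi}{4}$ and turn the remaining term into $\frac{k!}{2^k}\sum_{\ell=1}^{k}(-1)^{\ell}\binom{2k-\ell}{k}\frac{2^{\ell/2}}{\ell}\sin\frac{3\ell\pi}{4}$.

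Feeding this back into~\eqref{Wilf-F-Ser-Exapn} and separating the rational and $\pi$-proportional pieces, the coefficient bracket splits into $\frac{\pi}{4}\sum_{k=0}^{n}(-1)^kS(n,k)(2k-1)!!$ together with $\sum_{k=0}^{n}(-1)^kS(n,k)\frac{k!}{2^k}\sum_{\ell=1}^{k}(-1)^{\ell}\binom{2k-\ell}{k}\frac{2^{\ell/2}}{\ell}\sin\frac{3\ell\pi}{4}$, which is precisely the bracket in~\eqref{Wilf-F-Ser-final}. The only bookkeeping to check is the $k=0$ index: in the second double sum the inner sum $\sum_{\ell=1}^{0}$ is empty and hence vanishes, so the outer sum may be started at $k=1$ as written, while in the first sum the convention $(-1)!!=1$ from~\eqref{double-factorial-neg-odd-integer} keeps the $k=0$ contribution consistent.

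The main obstacle, such as it is, is simply keeping these double-factorial cancellations and index boundaries straight; there is no genuinely hard analytic step, because all the substantive work --- the Fa\`a di Bruno computation in Theorem~\ref{Wilf-F-Ser-Exapn-thm} and the evaluation of the hypergeometric value through Theorem~\ref{arctan-sqrt-ser-expan-thm} --- has already been completed in the preceding results.
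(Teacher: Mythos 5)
Your proposal is correct and follows exactly the paper's own route: the paper's proof consists precisely of substituting the closed-form formula~\eqref{Gauss-HF-Spec-Value} from Corollary~\ref{Gauss-HF-Spec-Value-cor} into the expansion~\eqref{Wilf-F-Ser-Exapn} of Theorem~\ref{Wilf-F-Ser-Exapn-thm} and simplifying. Your version merely makes explicit the double-factorial cancellations $(2k+1)!!=(2k+1)(2k-1)!!$, $(2k)!!=2^kk!$ and the $k=0$ boundary check that the paper leaves implicit, which is a welcome addition but not a different argument.
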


\begin{proof}
The series expansion~\eqref{Wilf-F-Ser-final} follows from substituting the closed-form formula~\eqref{Gauss-HF-Spec-Value} in Corollary~\ref{Gauss-HF-Spec-Value-cor} into the Maclaurin power series expansion~\eqref{Wilf-F-Ser-Exapn} in Theorem~\ref{Wilf-F-Ser-Exapn-thm} and simplifying.
The proof of Theorem~\ref{Wilf-F-Ser-final-thm} is complete.
\end{proof}

\begin{rem}
In~\cite[Section~4]{axioms-2962911.tex}, among other things, Qi and his coauthor presented
\begin{multline}\label{Wilf-F-Ser-final-rewr}
W(z)=\frac{\pi}{4}+\sum_{n=1}^{\infty}(-1)^n \Biggl[\sum_{k=1}^{n}(-1)^k S(n,k)(2k-1)!!\\
\times\Biggl(\frac{\pi}{4}-\frac{1}{2^k}\sum_{j=0}^{k-1}\frac{(-1)^j}{2j+1} \sum_{\ell=0}^{k-j-1}\binom{k}{\ell}\Biggr)\Biggr] \frac{z^n}{n!}
\end{multline}
for $|z|<\ln2$.
\par
Since the Maclaurin power series expansion~\eqref{Wilf-F-Ser-final} contains a triple of sums and the Maclaurin power series expansion~\eqref{Wilf-F-Ser-final-rewr} contains a quadruple of sums, we think that the Maclaurin power series expansion~\eqref{Wilf-F-Ser-final} is better or simpler than the Maclaurin power series expansion~\eqref{Wilf-F-Ser-final-rewr}.
\end{rem}

\section{Series expansions of inverse hyperbolic tangent function}\label{ser-arctanh-sec}

By means of the relation
\begin{equation*}
\arctan z=-\ti\arctanh(\ti z), \quad z^2\ne-1
\end{equation*}
in~\cite[p.~80, 4.4.22]{abram}, from the series expansions in~\eqref{arctan-pi-4-ser-eq}, \eqref{arctan-frac-ser-mid}, \eqref{arctan-sqrt-expan-eq}, and~\eqref{Wilf-F-Ser-final}, we derive the following series expansions related to the inverse hyperbolic tangent function $\arctanh z$.
\begin{enumerate}
\item
For $|z-\ti|<\sqrt{2}\,$, we have
\begin{equation*}
\frac{\arctanh z}{z}=\sum_{n=0}^{\infty}\biggl[\frac{\pi}{4}+T(n)\biggr](\ti z+1)^n,
\end{equation*}
where $T(n)$ is defined by~\eqref{T(n)-dfn-notation}.
\item
For $|z-\ti|<1$, we have
\begin{equation*}
\frac{\arctanh z-\frac{\pi}{4}\ti}{z}=\sum_{n=1}^{\infty}T(n)(\ti z+1)^n.
\end{equation*}
\item
For $|z+1|<1$, we have
\begin{multline*}
\frac{\arctanh\sqrt{z}\,}{\sqrt{z}\,}\\
=\sum_{n=0}^{\infty} \frac{1}{2^n}\Biggl[\frac{(2n-1)!!\pi}{4} +\frac{n!}{2^{n}}\sum_{k=1}^{n} (-1)^{k} \binom{2n-k}{n} \frac{2^{k/2}}{k} \sin\frac{3k\pi}{4}\Biggr] \frac{(z+1)^n}{n!}.
\end{multline*}
\item
For $|z|<\ln2$, we have
\begin{multline*}
\frac{\arctanh\sqrt{1-2\te^{-z}}\,}{\sqrt{1-2\te^{-z}}\,}
=\sum_{n=0}^{\infty}(-1)^n \Biggl[\frac{\pi}{4}\sum_{k=0}^{n}(-1)^k S(n,k)(2k-1)!!\\
+\sum_{k=1}^{n}(-1)^k S(n,k) \frac{k!}{2^k} \sum_{\ell=1}^{k} (-1)^{\ell} \binom{2k-\ell}{k} \frac{2^{\ell/2}}{\ell}\sin\frac{3\ell\pi}{4}\Biggr] \frac{z^n}{n!},
\end{multline*}
where $S(n,k)$ denotes the Stirling numbers of the second kind in~\eqref{Stirling-Number-dfn}.
\end{enumerate}

\section{Analysis on coefficients in series expansions of Wilf function}\label{coefficients-analysis-sec}

Considering the power series expansion~\eqref{Wilf-F-Ser-final} in Theorem~\ref{Wilf-F-Ser-final-thm}, we can see that the sequences $b_n$ and $c_n$ in~\eqref{Wilf-F-Ser-Split2Part} can be expressed as
\begin{equation}\label{b(n)-closed-rational}
b_n=\frac{1}{4}\frac{(-1)^n}{n!} \sum_{k=0}^{n}(-1)^kS(n,k) (2k-1)!!
\end{equation}
and
\begin{equation}\label{c(n)-closed-rational}
c_n=\frac{(-1)^{n+1}}{n!}\sum_{k=1}^{n}(-1)^k S(n,k) \frac{k!}{2^k}\sum_{\ell=1}^{k} (-1)^{\ell} \binom{2k-\ell}{k} \frac{2^{\ell/2}}{\ell}\sin\frac{3\ell\pi}{4}
\end{equation}
for $n\in\mathbb{N}_0$, where $S(n,k)$ denotes the Stirling numbers of the second kind.
\par
It is easy to see that the explicit formulas in~\eqref{b(n)-closed-rational} and~\eqref{c(n)-closed-rational} are closed-form expressions and that the sequence $b_n$ is rational. Since $2^{\ell/2}\sin\frac{3\ell\pi}{4}$ for $\ell\in\mathbb{N}$ is an integer, the sequence $c_n$ is also rational.
\par
From closed-form formulas in~\eqref{b(n)-closed-rational} and~\eqref{c(n)-closed-rational}, we can easily compute and list the first twelve values of the sequences $b_n$ and $c_n$ in Table~\ref{first12values-b-c-n}.
\begin{table}[hbtp]
\caption{The first twelve values of $b_n$ and $c_n$}
\begin{tabular}{|c|c|c|c|c|c|c|c|c|c|c|c|c|}
\hline
$n$&0&1&2&3&4&5&6&7&8&9&10&11\\ \hline
$b_n$ & $\frac{1}{4}$ & $\frac{1}{4}$ & $\frac{1}{4}$ & $\frac{7}{24}$ & $\frac{35}{96}$ & $\frac{113}{240}$ & $\frac{1787}{2880}$ & $\frac{16717}{20160}$ & $\frac{2257}{2016}$ & $\frac{315883}{207360}$ & $\frac{4324721}{2073600}$ & $\frac{447448}{155925}$ \\ \hline
$c_n$ & $0$ & $\frac{1}{2}$ & $\frac{3}{4}$ & $\frac{11}{12}$ & $\frac{55}{48}$ & $\frac{71}{48}$ & $\frac{2807}{1440}$ & $\frac{8753}{3360}$ & $\frac{94541}{26880}$ & $\frac{694663}{145152}$ & $\frac{47552791}{7257600}$ & $\frac{719718067}{79833600}$\\ \hline
\end{tabular}
\label{first12values-b-c-n}
\end{table}
These twenty-four special values coincide with the corresponding ones listed in the table on~\cite[p.~592]{Ward-2010-Wilf}.

\begin{thm}\label{b(n)-c(n)-seq-GF-thm}
For $n\in\mathbb{N}_0$, the sequences $b_n$ and $c_n$ can be generated respectively by
\begin{equation}\label{b(n)-GF}
G(x)=\frac{1}{4\sqrt{2\te^{-x}-1}\,}=\sum_{n=0}^{\infty}b_nx^n, \quad |x|<\ln2
\end{equation}
and
\begin{equation}\label{c(n)-GF}
\frac{\frac{\pi}{4}-\arctan\sqrt{2\te^{-x}-1}\,}{\sqrt{2\te^{-x}-1}\,}=\sum_{n=0}^{\infty}c_nx^n, \quad |x|<\ln2.
\end{equation}
\end{thm}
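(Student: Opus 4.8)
The plan is to treat the two generating functions separately: establish \eqref{b(n)-GF} by a direct Fa\`a di Bruno computation, and then deduce \eqref{c(n)-GF} almost for free from the $\pi$-linear splitting $a_n=b_n\pi-c_n$ already recorded in \eqref{Wilf-F-Ser-Split2Part} and \eqref{Wilf-F-Ser-final}.

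First I would prove \eqref{b(n)-GF}. Write $G(x)=\frac14 g(x)^{-1/2}$ with $g(x)=2\te^{-x}-1$, so that $g(0)=1$ and $g^{(j)}(0)=2(-1)^j$ for $j\in\mathbb{N}$; the outer function is $F(w)=\frac14 w^{-1/2}$, whose derivatives are $F^{(k)}(w)=\frac14\langle-\frac12\rangle_k w^{-1/2-k}$, so that $F^{(k)}(1)=\frac14\langle-\frac12\rangle_k=\frac14(-1)^k\frac{(2k-1)!!}{2^k}$, where I use $\langle-\frac12\rangle_k=(-1)^k(2k-1)!!/2^k$ together with the convention $(-1)!!=1$ from \eqref{double-factorial-neg-odd-integer}. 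Applying the Fa\`a di Bruno formula \eqref{Bruno-Bell-Polynomial} at $x=0$ gives $G^{(n)}(0)=\sum_{k=0}^{n}F^{(k)}(1)\,\bell_{n,k}\bigl(-2,2,\dotsc,(-1)^{n-k+1}2\bigr)$. By the scaling identity \eqref{Bell(n-k)} with $\alpha=2$ and $\beta=-1$ together with \eqref{Bell-stirling}, the Bell polynomial collapses to $(-1)^n2^kS(n,k)$. Substituting and dividing by $n!$ then reproduces exactly the closed form \eqref{b(n)-closed-rational} for $b_n$, which establishes \eqref{b(n)-GF}; the radius of convergence is $\ln2$ because the branch point $g(x)=0$ nearest the origin occurs at $x=\ln2$.

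Then \eqref{c(n)-GF} follows immediately. Theorem~\ref{Wilf-F-Ser-final-thm} together with \eqref{b(n)-closed-rational} and \eqref{c(n)-closed-rational} already gives $W(x)=\sum_{n=0}^{+\infty}(b_n\pi-c_n)x^n$ on $|x|<\ln2$, whence $c_n=\pi b_n-a_n$ and $\sum_{n=0}^{+\infty}c_nx^n=\pi G(x)-W(x)$. Inserting $G(x)=\frac1{4\sqrt{2\te^{-x}-1}\,}$ from \eqref{b(n)-GF} and $W(x)=\frac{\arctan\sqrt{2\te^{-x}-1}\,}{\sqrt{2\te^{-x}-1}\,}$ from \eqref{Wilf-F-arctan}, and combining over the common denominator $\sqrt{2\te^{-x}-1}\,$, yields precisely \eqref{c(n)-GF}.

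The only genuinely computational step is the $b_n$ identity, and even there the sole difficulty is the bookkeeping of signs: the value of the falling factorial $\langle-\frac12\rangle_k$, the alternating arguments $2(-1)^j$ fed into the Bell polynomial, and the factor $(-1)^n2^k$ produced by \eqref{Bell(n-k)} must all be tracked so that the overall sign $(-1)^n/(4\,n!)$ agrees with \eqref{b(n)-closed-rational}. I expect no serious obstacle; the $c_n$ half is purely formal once the splitting $a_n=b_n\pi-c_n$ is invoked.
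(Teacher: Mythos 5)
Your proposal is correct and takes essentially the same route as the paper: the paper likewise computes $G^{(n)}(0)$ by the Fa\`a di Bruno formula~\eqref{Bruno-Bell-Polynomial} together with the identities~\eqref{Bell(n-k)} and~\eqref{Bell-stirling}, matches the result against the closed form~\eqref{b(n)-closed-rational}, and then obtains~\eqref{c(n)-GF} purely formally from $\sum_{n=0}^{+\infty}c_nx^n=\pi G(x)-W(x)$ via the splitting~\eqref{Wilf-F-Ser-Split2Part}. The only cosmetic difference is that you state the radius-of-convergence justification (nearest branch point of $2\te^{-x}-1$ at $x=\ln2$) explicitly, which the paper leaves implicit.
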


\begin{proof}
In light of the Fa\`a di Bruno formula~\eqref{Bruno-Bell-Polynomial} and the identities~\eqref{Bell(n-k)} and~\eqref{Bell-stirling}, we obtain
\begin{align*}
4[G(x)]^{(n)}&=\sum_{k=0}^{n} \bigl(v^{-1/2}\bigr)^{(k)} \bell_{n,k}\bigl(-2\te^{-x}, 2\te^{-x}, \dotsc, (-1)^{n-k+1}2\te^{-x}\bigr)\\
&=\sum_{k=0}^{n} \biggl\langle-\frac{1}{2}\biggr\rangle_k v^{-1/2-k} (-1)^n2^k \te^{-kx} \bell_{n,k}(1, 1, \dotsc, 1)\\
&=(-1)^n\sum_{k=0}^{n} (-1)^k(2k-1)!! (2\te^{-x}-1)^{-1/2-k} \te^{-kx} S(n,k)\\
&\to(-1)^n\sum_{k=0}^{n}(-1)^k S(n,k)(2k-1)!!
\end{align*}
as $x\to0$ for $n\in\mathbb{N}_0$, where $v=v(x)=2\te^{-x}-1$. This derivative implies that, by comparing with the explicit expression in~\eqref{b(n)-closed-rational} of $b_n$, the function $G(x)$ defined on the interval $(-\infty,\ln2)$ is a generating function of the sequence $b_n$ for $n\in\mathbb{N}_0$.
\par
From the series expansions~\eqref{Wilf-F-Ser-Split2Part} and~\eqref{b(n)-GF}, we derive
\begin{equation*}
\sum_{n=0}^{\infty}c_nx^n=\pi G(x)-W(x)=\frac{\frac{\pi}{4}-\arctan\sqrt{2\te^{-x}-1}\,}{\sqrt{2\te^{-x}-1}\,},
\end{equation*}
the series expansion~\eqref{c(n)-GF} is thus proved. The proof of Theorem~\ref{b(n)-c(n)-seq-GF-thm} is complete.
\end{proof}

\begin{rem}
We can regard the generating function in~\eqref{c(n)-GF} as a composite of the functions $\frac{\frac{\pi}{4}-\arctan z}{z}$ and $z=z(x)=\sqrt{g(x)}\,=\sqrt{2\te^{-x}-1}\,$. Therefore, with aid of the Fa\`a di Bruno formula~\eqref{Bruno-Bell-Polynomial}, we acquire
\begin{align*}
&\quad\biggl(\frac{\frac{\pi}{4}-\arctan\sqrt{2\te^{-x}-1}\,}{\sqrt{2\te^{-x}-1}\,}\biggr)^{(n)}\\
&=\sum_{k=0}^{n}\biggl(\frac{\frac{\pi}{4}-\arctan z}{z}\biggr)^{(k)} \bell_{n,k}\bigl(z'(x), z''(x), \dotsc, z^{(n-k+1)}(x)\bigr)\\
&\to\sum_{k=0}^{n}(-1)^{k+1}k!T(k) \bell_{n,k}\Biggl(-1,0,-1,-3,-16,-105,\dotsc,\\
&\quad(-1)^{n-k+1}\sum_{\ell=0}^{n-k+1} (-1)^{\ell-1}S(n-k+1,\ell)(2\ell-3)!!\Biggr)\\
&=-\sum_{k=0}^{n}k!T(k) \bell_{n,k}\Biggl(1,0,1,3,16,105,\dotsc,\\
&\quad(-1)^{n-k}\sum_{\ell=0}^{n-k+1} (-1)^{\ell-1}S(n-k+1,\ell)(2\ell-3)!!\Biggr)\\
&=(-1)^{n+1}\sum_{k=0}^{n}T(k)\sum_{\ell=k}^{n}(-1)^{\ell}S(n,\ell)P(\ell,k)\\
&=(-1)^{n+1}\sum_{\ell=0}^{n}(-1)^{\ell}S(n,\ell)\frac{\ell!}{2^{\ell}}\sum_{k=1}^{\ell}(-1)^{k}\binom{2\ell-k}{\ell}\frac{2^{k/2}}{k} \sin\frac{3k\pi}{4}
\end{align*}
as $x\to0$ and $z\to1$, where we used the explicit formula~\eqref{bell-polyn-sqrt-exp} in~Theorem~\ref{sqrt-g(x)=0-bell-thm} and the identity~\eqref{Curious-id-inv-TT}. The closed-form expression~\eqref{c(n)-closed-rational} of the sequence $c_n$ is thus recovered.
\end{rem}

\begin{rem}
The generating function $G(x)$ for $|x|<\ln2$ in~\eqref{b(n)-GF} can be rearranged as
\begin{align*}
G(x)&=\frac{\te^{x/2}}{4\sqrt{2}\,\sqrt{1-\te^x/2}\,}\\
&=\frac{1}{4\sqrt{2}\,}\sum_{j=0}^{\infty}\binom{-1/2}{j}\biggl(-\frac{1}2\biggr)^j\te^{(j+1/2)x}\\
&=\frac{1}{4\sqrt{2}\,}\sum_{j=0}^{\infty}\binom{-1/2}{j}\biggl(-\frac{1}2\biggr)^j \sum_{n=0}^{\infty}\frac{1}{n!}\biggl(j+\frac{1}{2}\biggr)^nx^n\\
&=\frac{1}{4\sqrt{2}\,} \sum_{n=0}^{\infty}\frac{1}{n!} \Biggl[\sum_{j=0}^{\infty}\binom{-1/2}{j}\biggl(-\frac{1}2\biggr)^j \biggl(j+\frac{1}{2}\biggr)^n\Biggr]x^n\\
&=\frac{1}{4\pi}\sum_{n=0}^{\infty}\Biggl[\sum_{j=0}^{\infty} \biggl(j+\frac{1}{2}\biggr)^{n-1}\frac{2^{j+1/2}}{\binom{2j+1}{j+1/2}}\Biggr] \frac{x^n}{n!}.
\end{align*}
Then the infinite series representation
\begin{equation}\label{b(n)-series-express}
b_n=\frac{1}{4\pi}\frac{1}{n!}\sum_{j=0}^{\infty} \biggl(j+\frac{1}{2}\biggr)^{n-1}\frac{2^{j+1/2}}{\binom{2j+1}{j+1/2}}, \quad n\in\mathbb{N}_0,
\end{equation}
which appears at the site \url{https://oeis.org/A014307}, is recovered.
\end{rem}

\begin{thm}\label{b(n)-posit-ineq-thm}
The following conclusions are valid:
\begin{enumerate}
\item
the function $G(x)$ is logarithmically completely monotonic on $(-\infty,\ln2)$;
\item
the sequences
\begin{equation}\label{n!b(n)}
4n!b_n=(-1)^n\sum_{k=0}^{n} (-1)^k S(n,k)(2k-1)!!, \quad n\in\mathbb{N}_0
\end{equation}
and
\begin{equation}\label{(2n-2)-log-conv}
(-1)^n\sum_{k=1}^{n}(-1)^{k}S(n,k)(2k-2)!!, \quad n\in\mathbb{N}
\end{equation}
are positive, increasing, and logarithmically convex.
\end{enumerate}
\end{thm}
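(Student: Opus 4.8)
The plan is to read all three properties off a single exponential expansion of the generating function $G$, after identifying the two sequences as derivative values of $G$ at the origin. Differentiating the generating identity \eqref{b(n)-GF} $n$ times at $x=0$ gives $4n!\,b_n=4G^{(n)}(0)$, which is precisely the sequence \eqref{n!b(n)}; write it as $u_n$. For the sequence \eqref{(2n-2)-log-conv}, write it as $v_n$, I would apply the Fa\`a di Bruno formula \eqref{Bruno-Bell-Polynomial} to $\ln G(x)=-\ln4-\frac12\ln(2\te^{-x}-1)$. This is the same computation as in Theorem~\ref{b(n)-c(n)-seq-GF-thm}, now with $(\ln)^{(k)}(w)=(-1)^{k-1}(k-1)!\,w^{-k}$ in place of the derivatives of the square root; using \eqref{Bell(n-k)}, \eqref{Bell-stirling}, and $(k-1)!\,2^{k-1}=(2k-2)!!$, it collapses to $[\ln G]^{(n)}(0)=(-1)^n\sum_{k=1}^n(-1)^kS(n,k)(2k-2)!!=v_n$. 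Hence the positivity of $u_n$ and of $v_n$ is exactly the assertion that $G$ is absolutely, respectively logarithmically absolutely, monotonic at $x=0$, so part~(2)'s positivity will be a corollary of part~(1).

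For part~(1) the crux is expanding in pure exponentials. On $(-\infty,\ln2)$ one has $\te^x/2<1$, so
\[
\frac{1}{2\te^{-x}-1}=\frac{\te^{x}/2}{1-\te^{x}/2}=\sum_{m=1}^{+\infty}2^{-m}\te^{mx},
\]
converging together with all its derivatives uniformly on compact subsets. Thus $[\ln G]'(x)=\frac{\te^{-x}}{2\te^{-x}-1}=\frac12+\frac12\sum_{m\ge1}2^{-m}\te^{mx}$, so for every $n\ge1$ the derivative $[\ln G]^{(n)}(x)$ is a series of nonnegative multiples of $\te^{mx}$; since $G>0$ this yields $[\ln G]^{(n)}(x)\ge0$, i.e.\ $G$ is logarithmically absolutely monotonic on $(-\infty,\ln2)$ (equivalently, by the reflection equivalence recalled at the end of Section~\ref{preparation-sec}, $x\mapsto G(-x)$ is logarithmically completely monotonic on $(-\ln2,+\infty)$). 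The identical manipulation applied to $G$ itself gives
\[
G(x)=\frac{1}{4\sqrt2}\,\sum_{j\ge0}\binom{j-1/2}{j}2^{-j}\te^{(j+1/2)x},
\]
a positive combination of exponentials, which also exhibits $G$ as absolutely monotonic.

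Both expansions are moment representations, and this is what delivers the remaining two properties. Evaluating at $0$ gives $u_n=\sum_{j\ge0}C_j\lambda_j^{\,n}$ with $\lambda_j=j+\tfrac12$ and $C_j=\tfrac{1}{\sqrt2}\binom{j-1/2}{j}2^{-j}>0$, and $v_n=\tfrac12\sum_{m\ge1}m^{\,n-1}2^{-m}$ for $n\ge2$ (with an extra $+\tfrac12$ at $n=1$). Logarithmic convexity of either sequence, $x_{n-1}x_{n+1}\ge x_n^2$, then follows from the Cauchy--Schwarz inequality applied to the representing measure. The increasing property is the one subtle point: since the nodes $\lambda_0=\tfrac12$ and $m=0$ lie below $1$, the difference $x_{n+1}-x_n$ is not a sum of nonnegative terms, so I would not argue termwise. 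Instead, logarithmic convexity forces the ratios $x_{n+1}/x_n$ to be nondecreasing, and the base ratio is pinned by the explicit values $4\cdot0!\,b_0=4\cdot1!\,b_1=1$ and $v_1=v_2=1$; hence every ratio is $\ge1$ and both sequences are nondecreasing.

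The main obstacle is therefore precisely this monotonicity step, which must be extracted from log-convexity together with the base values rather than seen directly. A secondary bookkeeping point is the stray constant $\tfrac12$ in $[\ln G]'$: it makes $v_1=1$ exceed the corresponding zeroth moment $\tfrac12$, so comparing $v$ with the pure moment sequence $\widehat v_n=\tfrac12\sum_{m\ge1}m^{\,n-1}2^{-m}$ (which agrees with $v_n$ for $n\ge2$ and is log-convex by Cauchy--Schwarz) shows the enlarged endpoint can only help the inequality $v_1v_3\ge v_2^2$. The only purely technical matter is the routine justification of term-by-term differentiation of the geometric and binomial series on compact subsets of $(-\infty,\ln2)$, which is immediate from $\te^x/2<1$ there.
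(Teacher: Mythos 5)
Your proposal is correct and follows the same skeleton as the paper's proof: identify $4n!b_n$ in~\eqref{n!b(n)} and the sequence~\eqref{(2n-2)-log-conv} as the values at $x=0$ of $4G^{(n)}$ and $[\ln G]^{(n)}$ via the Fa\`a di Bruno computation, obtain positivity from the absolute monotonicity of $[\ln G]'(x)=\frac{1}{2-\te^x}=\frac12\sum_{k\ge0}(\te^x/2)^k$ on $(-\infty,\ln2)$ (like you, the paper actually proves logarithmic \emph{absolute} monotonicity of $G$ there, equivalently logarithmic complete monotonicity of $G(-x)$ on $(-\ln2,+\infty)$, which is how part~(1) must be read, since $[\ln G]'>0$ rules out the literal wording), and deduce the increasing property from log-convexity combined with the initial values $4\cdot0!\,b_0=4\cdot1!\,b_1=1$ and $v_1=v_2=1$ --- exactly the paper's ``log-convexity plus first few values'' step. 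Where you genuinely diverge is the log-convexity step itself: the paper invokes the lemma of \cite[p.~369]{mpf-1993} that the derivative sequence $(-1)^kf^{(k)}(t)$ of a completely monotonic $f$ is log-convex, applied to $G(-x)$ and $\ln G(-x)$, whereas you prove it directly by Cauchy--Schwarz on the explicit exponential (moment) expansions $4G^{(n)}(0)=\sum_{j\ge0}C_j\bigl(j+\frac12\bigr)^n$ and $[\ln G]^{(n)}(0)=\frac12\sum_{m\ge1}m^{n-1}2^{-m}$ for $n\ge2$. Your self-contained version is in fact more careful at the endpoint than the paper: the constant $\frac12$ in $[\ln G]'$ makes $v_1=1$ exceed the pure moment value $\frac12$, and your remark that this enlargement only strengthens $v_1v_3\ge v_2^2$ repairs a point the paper glosses over --- its cited lemma is nominally applied to $\ln G(-x)$, which is not itself completely monotonic (as $\ln G(0)=-\ln4<0$); the lemma really must be applied to $-[\ln G(-x)]'$, i.e.\ precisely to the order-$\ge1$ derivatives your computation isolates. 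Both routes deliver identical conclusions; yours trades one citation for a two-line Cauchy--Schwarz argument and gains rigor at $n=1$.
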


\begin{proof}
For $n\in\mathbb{N}$ and $v=v(x)=2\te^{-x}-1$, making use of the Fa\`a di Bruno formula~\eqref{Bruno-Bell-Polynomial} and the identities~\eqref{Bell(n-k)} and~\eqref{Bell-stirling}, we acquire
\begin{align*}
[\ln G(x)]^{(n)}&=\biggl(\ln\frac{1}{4\sqrt{2\te^{-x}-1}\,}\biggr)^{(n)}\\
&=-\frac12\sum_{k=1}^{n}(\ln v)^{(k)} \bell_{n,k}\bigl(-2\te^{-x},2\te^{-x},\dotsc,(-1)^{n-k+1}2\te^{-x}\bigr)\\
&=-\frac12\sum_{k=1}^{n}\frac{(-1)^{k-1}(k-1)!}{v^{k}} (-1)^n2^k\te^{-kx} \bell_{n,k}(1,1,\dotsc,1)\\
&=(-1)^n\sum_{k=1}^{n}(-1)^k\frac{(2k-2)!!}{(2\te^{-x}-1)^{k}} \te^{-kx} S(n,k)\\
&\to(-1)^n\sum_{k=1}^{n}(-1)^{k} S(n,k) (2k-2)!!
\end{align*}
as $x\to0$.
\par
On the other hand, it is obvious that the first logarithmic derivative
\begin{align*}
[\ln G(x)]'&=\biggl(\ln\frac{1}{4\sqrt{2\te^{-x}-1}\,}\biggr)'
=\frac{1}{2-\te^x}
=\frac{1}{2}\sum_{k=0}^{\infty}\biggl(\frac{\te^{x}}{2}\biggr)^k, \quad x<\ln2
\end{align*}
is absolutely monotonic on $(-\infty,\ln2)$. Hence, the function $G(x)$ is logarithmically absolutely monotonic on $(-\ln2,\infty)$. As a result, the positivity of the sequence in~\eqref{(2n-2)-log-conv} is valid.
\par
Basing on~\cite[Definition~1 and Theorem~1]{absolute-mon-simp.tex} recited in Section~\ref{preparation-sec}, considering the logarithmically absolute monotonicity of $G(x)$, we derive that the function $G(x)$ is absolutely monotonic on $(-\infty,\ln2)$. Combining this result with the proof of Theorem~\ref{b(n)-c(n)-seq-GF-thm} leads to the positivity of the sequence in~\eqref{n!b(n)}.
\par
Applying the positivity of the sequence in~\eqref{n!b(n)} to the second closed-form formula in~\eqref{b(n)-closed-rational} for $b_n$ reveals that the sequence $b_n$ for $n\in\mathbb{N}_0$ is positive.
\par
In~\cite[p.~369]{mpf-1993}, it was stated that, if $f$ is a completely monotonic function on $(0,\infty)$ such that $f^{(k)}(t)\ne0$ for $k\in\mathbb{N}_0$, then the sequence $(-1)^{k}f^{(k)}(t)$ for $k\in\mathbb{N}_0$ is logarithmically convex. Applying this result to the functions $G(-x)$ and $\ln G(-x)$, both of which are completely monotonic on $(0,\infty)\subset(-\ln2,\infty)$, yields the logarithmic convexity of the sequences in~\eqref{n!b(n)} and~\eqref{(2n-2)-log-conv}.
\par
From the logarithmic convexity of the sequences in~\eqref{n!b(n)} and~\eqref{(2n-2)-log-conv} and by the increasing property of the first few values of the sequences in~\eqref{n!b(n)} and~\eqref{(2n-2)-log-conv}, we conclude that both of the sequences in~\eqref{n!b(n)} and~\eqref{(2n-2)-log-conv} are increasing.
The proof of Theorem~\ref{b(n)-posit-ineq-thm} is complete.
\end{proof}

\begin{rem}
A few of results of the integer sequence $4n!b_n$ for $n\in\mathbb{N}_0$ have been being collected at the site \url{https://oeis.org/A014307}. For example, the recursive relation
\begin{equation*}
(n+1)!b_{n+1}=\frac14+\sum_{j=1}^n\biggl[\binom{n+1}{j}-1\biggr]j!b_j, \quad n\in\mathbb{N}_0
\end{equation*}
is listed at the site, which implies the positivity and increasing property of the sequence $n!b_n$ for $n\in\mathbb{N}_0$.
\par
From the complete monotonicity of the functions $G(-x)$ and $\ln G(-x)$ on the infinite interval $(0,\infty)\subset(-\ln2,\infty)$ and a number of properties of completely monotonic functions collected in~\cite[Chapter~XIII]{mpf-1993}, \cite[Chapter~IV]{Widder-1941B}, and the monograph~\cite{Schilling-Song-Vondracek-2nd}, we can deduce many properties, which do not appear at the site \url{https://oeis.org/A014307}, of the sequence $4n!b_n$ for $n\in\mathbb{N}_0$, as done in~\cite{1st-Sirling-Number-2012-Ren.tex, MMAS-19-15811.tex} and closely related references therein. Due to the limitation of length, we do not write down them in details here.
\end{rem}

\begin{rem}
A lot of conclusions for the sequence $2n!c_n$ with $n\in\mathbb{N}$ have been listed at \url{http://oeis.org/A180875}.
At the site, we do not see the generating function in~\eqref{c(n)-GF} for the sequence $c_n$ with $n\in\mathbb{N}_0$.
\end{rem}

\begin{thm}\label{c(n)-b(n)-ratio-lim-thm}
The limits
\begin{equation}\label{b(n)-lim-infty}
\lim_{n\to\infty}b_n=\infty
\end{equation}
and
\begin{equation}\label{c(n)-b(n)-ratio-lim-eq}
\lim_{n\to\infty}\frac{c_n}{b_n}=\pi
\end{equation}
are valid. Consequently, we have
\begin{equation}\label{c(n)-lim-infty}
\lim_{n\to\infty}c_n=\infty
\end{equation}
\end{thm}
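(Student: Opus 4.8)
The plan is to reduce all three limits to a single growth-rate comparison. Writing the coefficients via~\eqref{Wilf-F-Ser-Split2Part} as $a_n=b_n\pi-c_n$, we have $\frac{c_n}{b_n}=\pi-\frac{a_n}{b_n}$ and $c_n=\bigl(\frac{c_n}{b_n}\bigr)b_n$; thus the whole theorem follows once we establish (i) $b_n\to+\infty$ and (ii) $\frac{a_n}{b_n}\to0$. Indeed, (ii) gives \eqref{c(n)-b(n)-ratio-lim-eq} at once, and then \eqref{c(n)-lim-infty} holds because $c_n=\bigl(\frac{c_n}{b_n}\bigr)b_n\to\pi\cdot(+\infty)=+\infty$.

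For (i), recall from Theorem~\ref{b(n)-posit-ineq-thm} that $b_n$ is positive and increasing. The generating function $G$ in~\eqref{b(n)-GF} has its branch point nearest the origin at $x=\ln2$, where $\sqrt{2\te^{-x}-1}\,\to0^+$ forces $G(x)\to+\infty$; hence the radius of convergence of $\sum_{n}b_nx^n$ is exactly $\ln2<1$, so that $\limsup_{n}b_n^{1/n}=\frac1{\ln2}>1$ and the sequence $b_n$ is unbounded. An unbounded increasing sequence diverges to $+\infty$, which is \eqref{b(n)-lim-infty}.

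For (ii), I would compare the exponential growth rates of $\lvert a_n\rvert$ and $b_n$. By the analytic continuation furnished by Ward, the Maclaurin series $\sum_{n}a_nz^n$ has radius of convergence exactly $R=\sqrt{(\ln2)^2+4\pi^2}\,$, which is strictly larger than $\ln2$; thus, fixing any $\rho$ with $\ln2<\rho<R$, we get $\lvert a_n\rvert\le\rho^{-n}$ for all large $n$. On the other side, I would pin the growth of $b_n$ from below by a Darboux-type singularity analysis of $G$ at $x=\ln2$: from $2\te^{-x}-1=\te^{\ln2-x}-1=(\ln2-x)\bigl[1+O(\ln2-x)\bigr]$ one reads off the dominant singular behaviour $G(x)\sim\frac1{4\sqrt{\ln2-x}\,}$, and since the remaining branch points $\ln2\pm2\pi\ti,\dotsc$ lie strictly farther from the origin, the transfer theorem yields $b_n\sim\frac1{4\sqrt{\pi\ln2}\,}\frac{(\ln2)^{-n}}{\sqrt{n}\,}$; in particular $b_n\ge c\,(\ln2)^{-n}n^{-1/2}$ for some $c>0$ and all large $n$. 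Combining the two estimates gives $\frac{\lvert a_n\rvert}{b_n}\le\frac1c\sqrt{n}\,\bigl(\tfrac{\ln2}{\rho}\bigr)^n$, and since $\frac{\ln2}{\rho}<1$ the right-hand side tends to $0$, proving (ii).

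The hard part will be the lower estimate on $b_n$: the positivity and monotonicity from Theorem~\ref{b(n)-posit-ineq-thm} control only $\limsup_{n}b_n^{1/n}$ through the radius of convergence and do not by themselves prevent $b_n$ from decaying faster along a subsequence, so one genuinely needs the singularity analysis (or, equivalently, a saddle-point estimate of the explicit series~\eqref{b(n)-series-express}) to secure $\liminf_{n}b_n^{1/n}\ge\frac1{\ln2}$. Once that rate is in hand, the comparison with the exponentially smaller $\lvert a_n\rvert$, guaranteed by $R>\ln2$, closes the argument routinely. I note that the same analysis applied to $C=\pi G-W$ in~\eqref{c(n)-GF} proves \eqref{c(n)-b(n)-ratio-lim-eq} and \eqref{c(n)-lim-infty} in one stroke: because $W$ is analytic at $x=\ln2$, the function $C$ inherits precisely the dominant singularity $\frac{\pi}4(\ln2-x)^{-1/2}$ of $\pi G$, whence $c_n\sim\pi b_n$.
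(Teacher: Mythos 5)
Your proposal is correct in substance but reaches the two key facts by a genuinely different route than the paper. For \eqref{b(n)-lim-infty}, the paper works directly with the explicit series \eqref{b(n)-series-express}: it rewrites it via the Wallis-type double inequalities \eqref{best-bounds=Wallis} and \eqref{Wallis-type-ineq}, bounds $b_n$ below by a Lerch transcendent $\Phi\bigl(\frac12,\frac12-n,\frac32\bigr)$, and invokes Navas' asymptotic theorem for $\Phi(\lambda,s,z)$ as $s\to-\infty$. Your Darboux/transfer-theorem analysis of $G$ at the dominant branch point $x=\ln2$ replaces all of this and proves strictly more: it yields the full asymptotic $b_n\sim\frac{1}{4\sqrt{\pi\ln2}\,}\frac{(\ln2)^{-n}}{\sqrt{n}\,}$, which the paper only records in a remark (formula \eqref{b(n)-asymptotic-eq}) on the strength of Kotesovec's Maple computation, not a proof. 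For \eqref{c(n)-b(n)-ratio-lim-eq}, the paper argues more softly: since Ward's radius of analyticity $\sqrt{(\ln2)^2+4\pi^2}\,$ exceeds $1$, the coefficients satisfy $a_n=b_n\bigl(\pi-\frac{c_n}{b_n}\bigr)\to0$, and together with $b_n\to+\infty$ this forces $\pi-\frac{c_n}{b_n}\to0$; your exponential-rate comparison of $|a_n|\le\rho^{-n}$ against $b_n\ge c\,(\ln2)^{-n}n^{-1/2}$ proves the same limit with a quantitative rate, and only needs $R>\ln2$ rather than $R>1$. Your closing observation --- that $W$ is analytic at $x=\ln2$ because $\frac{\arctan w}{w}$ is even and analytic in $w$, so $\pi G-W$ inherits exactly the singular part $\frac{\pi}{4}(\ln2-x)^{-1/2}$ and hence $c_n\sim\pi b_n$ --- is an elegant one-stroke alternative that does not appear in the paper at all.

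One misstatement needs repair: Theorem~\ref{b(n)-posit-ineq-thm} does \emph{not} assert that $b_n$ is increasing; it gives positivity of $b_n$ and monotonicity of the sequence $4n!b_n$, while monotonicity of $b_n$ itself is only \emph{conjectured} in a remark of the paper. So your first-paragraph argument (``an unbounded increasing sequence diverges'') is not available as stated, and, as you yourself diagnose, the radius of convergence controls only $\limsup_{n}b_n^{1/n}$. The divergence \eqref{b(n)-lim-infty} must therefore come from the lower estimate $\liminf_{n}b_n^{1/n}\ge\frac1{\ln2}>1$, which your singularity analysis does supply; with that, the first paragraph's flawed step is superseded and the proof closes. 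Do also record the (easily checked) $\Delta$-analyticity of $G$ needed for the transfer theorem: it holds because the only other singularities of $G$ are the branch points $\ln2+2k\pi\ti$, $k\ne0$, at distance at least $\sqrt{(\ln2)^2+4\pi^2}\,$ from the origin.
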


\begin{proof}
In~\cite[p.~2, (1.2)]{Wallis-Qi-Cao-Niu.tex}, the double inequality
\begin{equation}\label{best-bounds=Wallis}
\frac{1}{\sqrt{\pi(n+4/\pi-1)}\,}\le \frac{(2n-1)!!}{(2n)!!}<\frac{1}{\sqrt{\pi(n+1/4)}\,}, \quad n\in\mathbb{N}
\end{equation}
was recited, where the constants $\frac{4}{\pi}-1$ and $\frac14$ in \eqref{best-bounds=Wallis} are the best possible in the sense that these two constants cannot be replaced by smaller and bigger ones, respectively.
In~\cite[Theorem~1.1]{Wallis-Qi-Cao-Niu.tex}, the double inequality
\begin{equation}\label{Wallis-type-ineq}
\frac{\sqrt{\pi}\,}{2\sqrt{n+{9\pi}/{16}-1}\,}
\le\frac{(2n)!!}{(2n+1)!!}
<\frac{\sqrt{\pi}\,}{2\sqrt{n+3/4}\,},\quad n\in\mathbb{N}
\end{equation}
was established, where the constants $\frac{9\pi}{16}-1$ and $\frac34$ in \eqref{Wallis-type-ineq} are the best possible in the sense that these two numbers cannot be replaced by smaller and bigger ones, respectively; see also~\cite[p.~658, Theorem~4]{notes-best-simple-open-jkms.tex}.
\par
The infinite series representation~\eqref{b(n)-series-express} can be reformulated as
\begin{equation}\label{b(n)-series-express0reform}
b_n=\frac{1}{n!}\sum_{j=0}^{\infty} \biggl(j+\frac{1}{2}\biggr)^{n-1}\frac{(2j+1)!!}{(2j)!!} \frac{1}{2^{j+7/2}}, \quad n\in\mathbb{N}_0.
\end{equation}
Substituting~\eqref{Wallis-type-ineq} into~\eqref{b(n)-series-express0reform} yields
\begin{align*}
b_n&>\frac{1}{n!}\Biggl[\frac{1}{2^{n-5/2}} +\frac{1}{2^{5/2}\sqrt{\pi}\,} \sum_{j=1}^{\infty}\biggl(j+\frac{1}{2}\biggr)^{n-1} \frac{\sqrt{j+{9\pi}/{16}-1}\,}{2^{j}}\Biggr]\\
&>\frac{1}{n!}\Biggl[\frac{1}{2^{n-5/2}} +\frac{1}{2^{5/2}\sqrt{\pi}\,} \sum_{j=1}^{\infty}\biggl(j+\frac{1}{2}\biggr)^{n-1/2} \frac{1}{2^{j}}\Biggr]\\
&=\frac{1}{n!}\biggl[\frac{1}{2^{n-5/2}} +\frac{1}{2^{7/2}\sqrt{\pi}\,} \Phi\biggl(\frac{1}{2},\frac{1}{2}-n,\frac{3}{2}\biggr)\biggr]
\end{align*}
for $n\in\mathbb{N}_0$, where $\Phi(z,s,a)$ is the Lerch transcendent
\begin{equation*}
\Phi(z,s,a)=\sum_{j=0}^{\infty}\frac{z^{j}}{(j+a)^{s}}, \quad |z|<1, \quad a\ne0,-1,-2,\dotsc;
\end{equation*}
see~\cite[p.~612, Section~25.14]{NIST-HB-2010}.
\par
Let $0<|\lambda|<1$ and $\lambda\not\in(-1,0)$. Corollary~4 in~\cite[p.~29]{Navas-JAT-2013} reads that the limit
\begin{equation*}
\lim_{\mathbb{R}\ni s\to-\infty}\frac{(-\ln\lambda)^{1-s}\Phi(\lambda,s,z)}{\Gamma(1-s)}=\frac{1}{\lambda^z}
\end{equation*}
converges uniformly for $z$ on compact subset of $\mathbb{C}$. As a result, we arrive at
\begin{align*}
\lim_{n\to\infty}\biggl[\frac{1}{n!}\Phi\biggl(\frac{1}{2},\frac{1}{2}-n,\frac{3}{2}\biggr)\biggr]
&=\lim_{n\to\infty}\biggl[\frac{\Gamma(1/2+n)}{n!(\ln2)^{1/2+n}} \frac{(\ln2)^{1/2+n}}{\Gamma(1/2+n)} \Phi\biggl(\frac{1}{2},\frac{1}{2}-n,\frac{3}{2}\biggr)\biggr]\\
&=\frac{\sqrt{\pi}\,}{2^{3/2}(\ln2)^{1/2}}\lim_{n\to\infty}\frac{(2n-1)!!}{(2n)!!(\ln2)^{n}}\\
&=\infty,
\end{align*}
where we used in the last step the double inequality~\eqref{best-bounds=Wallis}. Consequently, the limit~\eqref{b(n)-lim-infty} is proved.
\par
By virtue of the necessity of convergence for power series expansions, we see that the coefficients $a_n$ in the series expansion~\eqref{Wilf-F-Ser-Split2Part} satisfy
\begin{equation*}
a_n=b_n\pi-c_n=b_n\biggl(\pi-\frac{c_n}{b_n}\biggr)
\to0, \quad n\to\infty.
\end{equation*}
Accordingly, from the limit~\eqref{b(n)-lim-infty}, we see that it is mandatory that
\begin{equation*}
\lim_{n\to\infty}\biggl(\pi-\frac{c_n}{b_n}\biggr)=0.
\end{equation*}
The limit~\eqref{c(n)-b(n)-ratio-lim-eq} is thus verified.
\par
From any one of the limits~\eqref{b(n)-lim-infty} and~\ref{c(n)-b(n)-ratio-lim-eq}, we can trivially deduce the limit~\eqref{c(n)-lim-infty}.
The proof of Theorem~\ref{c(n)-b(n)-ratio-lim-thm} is complete.
\end{proof}

\begin{rem}
Since $b_n$ and $c_n$ for all $n\in\mathbb{N}_0$ are rational numbers, we can regard the limit~\eqref{c(n)-b(n)-ratio-lim-eq} as an asymptotic rational approximation to the circular constant $\pi$.
\end{rem}

\begin{rem}
At the web sites \url{https://oeis.org/A014307} and \url{http://oeis.org/A180875}, there exist two asymptotic formulas
\begin{equation}\label{b(n)-asymptotic-eq}
4n!b_n\sim\frac{\sqrt{2}\,n^n}{\te^n(\ln2)^{n+1/2}}
=\sqrt{\frac{2}{\ln2}}\,\biggl(\frac{n}{\te\ln2}\biggr)^n, \quad n\to\infty
\end{equation}
and
\begin{equation}\label{c(n)-asymptotic-eq}
2(n+1)!c_{n+1}\sim\pi \frac{n^{n+1}}{\sqrt{2}\te^{n}(\ln2)^{n+3/2}}
=\frac{\te\pi}{\sqrt{2\ln2}\,}\biggl(\frac{n}{\te\ln2}\biggr)^{n+1}, \quad n\to\infty,
\end{equation}
respectively.
On 4 September 2021, V\'aclav Kot\v{e}\v{s}ovec told that, using the Maple module equivalent, we can prove these two asymptotic formulas~\eqref{b(n)-asymptotic-eq} and~\eqref{c(n)-asymptotic-eq}.
Accordingly, it is easy to see that
\begin{equation*}
\frac{c_n}{b_n}\sim\te\pi\biggl(1-\frac{1}{n}\biggr)^n\to\pi, \quad n\to\infty.
\end{equation*}
The limit~\eqref{c(n)-b(n)-ratio-lim-eq} is thus proved again.
\par
From asymptotic formulas~\eqref{b(n)-asymptotic-eq} and~\eqref{c(n)-asymptotic-eq}, we derive the asymptotic formula
\begin{equation*}
a_n=b_n\pi-c_n
\sim \frac{\pi\te}{2\sqrt{2\ln2}\,} \frac{n^n}{n!}\frac{1}{(\te\ln2)^n} \biggl[\frac{1}{\te}-\biggl(1-\frac1n\biggr)^n\biggr], \quad n\to\infty.
\end{equation*}
\end{rem}

\begin{rem}
We conjecture that the sequences $b_n$ and $c_n$ are increasing and convex in $n\in\mathbb{N}_0$.
\end{rem}

\begin{thm}\label{d(n)>0-thm}
Let
\begin{equation}\label{d(n)-defined-eq}
d_n=\frac{(-1)^{n}}{n!} \sum_{k=0}^{n} (-1)^{k}S(n,k)(2k-3)!!, \quad n\in\mathbb{N}_0.
\end{equation}
For $n\in\mathbb{N}$, the sequence $d_n$ is nonnegative.
\end{thm}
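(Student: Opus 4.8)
The plan is to recognize $d_n$ as essentially the Maclaurin coefficients of $\sqrt{2\te^{-z}-1}\,$ and then to reduce the claim to the nonnegativity of the Maclaurin coefficients of a single, manifestly positive elementary function. Comparing the definition~\eqref{d(n)-defined-eq} with Maclaurin's series expansion~\eqref{sqrt-exp-wilf-ser-eq} in Theorem~\ref{sqrt-exp-wilf-ser-thm}, one reads off that the coefficient of $z^n$ in $\sqrt{2\te^{-z}-1}\,$ equals $-d_n$, so that $\sum_{n=0}^{+\infty}d_nz^n=-\sqrt{2\te^{-z}-1}\,$. Since $d_0=-1$, putting $\psi(z)=1-\sqrt{2\te^{-z}-1}\,$ gives $\psi(z)=\sum_{n=1}^{+\infty}d_nz^n$, and it therefore suffices to prove that $\psi$ has nonnegative Maclaurin coefficients.

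First I would pass to the derivative, because $\psi'$ has a strikingly simple closed form. Differentiating and clearing the square root yields $\psi'(z)=\frac{\te^{-z}}{\sqrt{2\te^{-z}-1}\,}=\bigl(2\te^{z}-\te^{2z}\bigr)^{-1/2}$, and the decisive algebraic simplification is the identity $2\te^{z}-\te^{2z}=1-(\te^{z}-1)^2$, whence $\psi'(z)=\bigl[1-(\te^{z}-1)^2\bigr]^{-1/2}$. Since the coefficient of $z^{m}$ in $\psi'$ equals $(m+1)d_{m+1}$, the nonnegativity of $d_n$ for $n\in\mathbb{N}$ is equivalent to the nonnegativity of every Maclaurin coefficient of $\psi'$.

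Next I would expand $\psi'$ by the generalized binomial theorem in the single variable $(\te^z-1)^2$. For $z$ near the origin one has $|\te^z-1|<1$, so that $\psi'(z)=\sum_{j=0}^{+\infty}\binom{-1/2}{j}(-1)^j(\te^z-1)^{2j}=\sum_{j=0}^{+\infty}\frac{(2j-1)!!}{(2j)!!}(\te^z-1)^{2j}$, where each coefficient $\frac{(2j-1)!!}{(2j)!!}$ is positive. By the generating function~\eqref{2stirl-gen-f}, each power satisfies $(\te^z-1)^{2j}=(2j)!\sum_{n=2j}^{+\infty}S(n,2j)\frac{z^n}{n!}$ and hence has nonnegative Maclaurin coefficients, because $S(n,2j)\ge0$. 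Interchanging the two summations, which is legitimate since for real $z\in[0,\ln2)$ all the terms are nonnegative, gives $\psi'(z)=\sum_{n=0}^{+\infty}\bigl[\sum_{j=0}^{\lfloor n/2\rfloor}\frac{(2j-1)!!}{(2j)!!}(2j)!\,S(n,2j)\bigr]\frac{z^n}{n!}$, and every inner bracket is a finite sum of nonnegative quantities. Consequently each Maclaurin coefficient of $\psi'$ is nonnegative, and therefore $d_n\ge0$ for all $n\in\mathbb{N}$.

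The main obstacle is not any single estimate but locating the right intermediate object: the raw sum in~\eqref{d(n)-defined-eq} mixes the signed factors $(-1)^kS(n,k)(2k-3)!!$ and is not termwise of one sign, so a direct combinatorial sign analysis looks hopeless. The two moves that unlock the argument are (i) trading $d_n$ for the coefficients of the elementary function $\psi'$, and (ii) the factorization $2\te^z-\te^{2z}=1-(\te^z-1)^2$, which rewrites $\psi'$ as a positive series in the single building block $(\te^z-1)^2$, whose own coefficient positivity is supplied by the Stirling generating function~\eqref{2stirl-gen-f}. Once these are in place the conclusion is immediate, with only the routine justification of the rearrangement of the double series to verify; this causes no difficulty on $[0,\ln2)$, where all terms are nonnegative. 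I would also note in passing that the statement is sharp in the sense that $d_2=0$, which is why only nonnegativity, rather than strict positivity, can be asserted.
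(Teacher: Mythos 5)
Your proof is correct, and it diverges from the paper's at precisely the decisive step. Both arguments open the same way: differentiating the expansion~\eqref{sqrt-exp-wilf-ser-eq} to trade $d_n$ for the Maclaurin coefficients of $\frac{1}{\sqrt{\te^x(2-\te^x)}\,}$ --- your $\psi'(z)=\bigl(2\te^z-\te^{2z}\bigr)^{-1/2}$ is exactly the paper's generating function~\eqref{A014304-series-expan-eq}, since $\te^z(2-\te^z)=2\te^z-\te^{2z}$. From there the paper goes through monotonicity theory: it computes the logarithmic derivative $\bigl[\ln\bigl(\te^x(2-\te^x)\bigr)^{-1/2}\bigr]'=\frac{1}{2-\te^x}-1$, observes that this is absolutely monotonic on $[0,\ln2)$ as a nonnegative power series in $\te^x$, and then invokes the cited implication that a logarithmically absolutely monotonic function is absolutely monotonic, so all derivatives of the generating function at $0^+$ are nonnegative. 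You instead use the factorization $2\te^z-\te^{2z}=1-(\te^z-1)^2$, the binomial series, and the Stirling generating function~\eqref{2stirl-gen-f} to produce an explicit nonnegative expression for each coefficient; since $(2j)!=(2j)!!\,(2j-1)!!$, your inner bracket simplifies to $\frac{(2j-1)!!}{(2j)!!}(2j)!=[(2j-1)!!]^2$, giving the clean closed form $(n+1)!\,d_{n+1}=\sum_{j=0}^{\lfloor n/2\rfloor}[(2j-1)!!]^2\,S(n,2j)$, which is manifestly nonnegative, vanishes for $n=1$ (recovering your remark that $d_2=0$ makes the statement sharp), and reproduces the paper's listed values $1,0,1,3,16,\dotsc$ of $(n+1)!\,d_{n+1}$. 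What each approach buys: yours is more elementary and self-contained, bypassing the (logarithmically) absolutely monotonic machinery entirely, and it delivers strictly more than the sign --- an explicit positive Stirling-number formula for $(n+1)!\,d_{n+1}$ that the paper does not state; the paper's route buys the stronger structural fact that the generating function is logarithmically absolutely monotonic on $[0,\ln2)$, the kind of property that (as in Theorem~\ref{b(n)-posit-ineq-thm}) also feeds log-convexity and monotonicity conclusions. Your Tonelli-type justification of the double-series interchange on $[0,\ln2)$, combined with uniqueness of power-series coefficients, is the correct routine closing step, so there is no gap.
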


\begin{proof}
Differentiating on both sides of the series expansion~\eqref{sqrt-exp-wilf-ser-eq} in Theorems~\ref{sqrt-exp-wilf-ser-thm} and simplifying give
\begin{equation}\label{A014304-series-expan-eq}
\frac{1}{\sqrt{\te^x(2-\te^x)}\,}=\sum_{n=0}^{\infty}(n+1)d_{n+1}x^n, \quad |x|<\ln2.
\end{equation}
Taking the logarithm and differentiating result in
\begin{equation*}
\biggl[\ln\frac{1}{\sqrt{\te^x(2-\te^x)}\,}\biggr]'
=-\frac{1}{2}[x+\ln(2-\te^x)]'
=\frac{1}{2-\te^x}-1
=\frac{1}{2}\Biggl[\sum_{k=1}^{\infty}\biggl(\frac{\te^x}{2}\biggr)^k-1\Biggr]
\end{equation*}
which is absolutely monotonic on the interval $[0,\ln2)$. This means that the function $\frac{1}{\sqrt{\te^x(2-\te^x)}\,}$ is (logarithmically) absolutely monotonic on $[0,\ln2)$. Hence, we arrive at
\begin{equation*}
\lim_{x\to0^+}\biggl[\frac{1}{\sqrt{\te^x(2-\te^x)}\,}\biggr]^{(n)}=(n+1)!d_{n+1}\ge0, \quad n\in\mathbb{N}_0.
\end{equation*}
The proof of Theorem~\ref{d(n)>0-thm} is complete.
\end{proof}

\begin{rem}
We can rewrite the generating function in~\eqref{A014304-series-expan-eq} as
\begin{align*}
\frac{1}{\sqrt{\te^x(2-\te^x)}\,}&=\frac{\te^{-x/2}}{\sqrt{2}\,\sqrt{1-\te^x/2}\,}\\
&=\frac{1}{\sqrt{2}\,}\sum_{j=0}^{\infty}\binom{-1/2}{j}\biggl(-\frac{1}2\biggr)^j\te^{(j-1/2)x}\\
&=\frac{1}{\sqrt{2}\,}\sum_{j=0}^{\infty}\binom{-1/2}{j}\biggl(-\frac{1}2\biggr)^j \sum_{n=0}^{\infty}\frac{1}{n!}\biggl(j-\frac{1}{2}\biggr)^nx^n\\
&=\frac{1}{\sqrt{2}\,} \sum_{n=0}^{\infty}\frac{1}{n!} \Biggl[\sum_{j=0}^{\infty}\binom{-1/2}{j}\biggl(-\frac{1}2\biggr)^j \biggl(j-\frac{1}{2}\biggr)^n\Biggr]x^n\\
&=\frac{1}{\pi}\sum_{n=0}^{\infty}\Biggl[\sum_{j=0}^{\infty} \frac{(j-1/2)^{n}}{j+1/2} \frac{2^{j+1/2}}{\binom{2j+1}{j+1/2}}\Biggr] \frac{x^n}{n!}.
\end{align*}
Therefore, we acquire an infinite series representation
\begin{equation}\label{d(n)-series-repres}
d_{n}=\frac{1}{\pi} \frac{1}{n!} \sum_{j=0}^{\infty} \frac{(j-1/2)^{n}}{j+1/2} \frac{2^{j+1/2}}{\binom{2j+1}{j+1/2}}, \quad n\in\mathbb{N}.
\end{equation}
\par
By a similar approach used in the proof of the limit~\eqref{b(n)-lim-infty}, as in the proof of Theorem~\ref{c(n)-b(n)-ratio-lim-thm}, we can confirm that $\lim_{n\to\infty}d_n=\infty$.
\end{rem}

\begin{rem}
The sequence $n!d_n$ is an integer sequence. The first few values of $n!d_n$ for $0\le n\le21$ are
\begin{gather*}
-1,\quad 1,\quad 0,\quad 1,\quad 3,\quad 16,\quad 105,\quad 841,\quad 7938,\quad 86311,\quad 1062435,\\
14605306,\quad 221790723,\quad 3687263581,\quad 66609892440,\quad 1299237505021,\\
27213601303983,\quad 609223983928576,\quad 14516520372130245,\\
366820998284761861,\quad 9798039716677045218,\quad 275837214061454446171.
\end{gather*}
When $n\in\mathbb{N}$, that is, when removing off the first term $-1$, the sequence $n!d_n$ appears at the site \url{https://oeis.org/A014304}.
\par
We guess that the sequences $n!d_n$ and $d_n$ for $n\in\mathbb{N}$ are increasing and convex.
\end{rem}

\begin{thm}\label{b(n)-det-thm}
For $n\in\mathbb{N}$, the sequences $b_n$ and $d_n$ are connected by the relations
\begin{equation}\label{b(n)-d(n)-Det}
b_n=\frac{1}{4}
\begin{vmatrix}
1 & -1 & 0 & 0 & \dotsm & 0& 0 & 0\\
0 & 1 & -1 & 0 & \dotsm & 0 & 0& 0\\
\frac16 & 0 & 1 & -1 & \dotsm & 0 & 0 & 0\\
\vdots & \vdots & \vdots & \vdots & \ddots & \vdots &\vdots & \vdots\\
d_{n-2} & d_{n-3} & d_{n-4} & d_{n-5} & \dotsm & 1 & -1 & 0\\
d_{n-1} & d_{n-2} & d_{n-3} & d_{n-4} & \dotsm & 0 & 1 & -1\\
d_{n} & d_{n-1} & d_{n-2} & d_{n-3} & \dotsm & \frac16 & 0 & 1
\end{vmatrix}
\end{equation}
and
\begin{equation}\label{d(n)-b(n)-Det}
d_n=(-1)^{n-1}4^n
\begin{vmatrix}
\frac14 & \frac14 & 0 & 0 & \dotsm & 0& 0 & 0\\
\frac14 & \frac14 & \frac14 & 0 & \dotsm & 0 & 0& 0\\
\frac7{24} & \frac14 & \frac14 & \frac14 & \dotsm & 0 & 0 & 0\\
\vdots & \vdots & \vdots & \vdots & \ddots & \vdots &\vdots & \vdots\\
b_{n-2} & b_{n-3} & b_{n-4} & b_{n-5} & \dotsm & \frac14 & \frac14 & 0\\
b_{n-1} & b_{n-2} & b_{n-3} & b_{n-4} & \dotsm & \frac14 & \frac14 & \frac14\\
b_{n} & b_{n-1} & b_{n-2} & b_{n-3} & \dotsm & \frac7{24} & \frac14 & \frac14
\end{vmatrix}.
\end{equation}
\end{thm}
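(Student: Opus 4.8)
The plan is to recognize \eqref{b(n)-d(n)-Det} and \eqref{d(n)-b(n)-Det} as the two incarnations of the classical determinantal (Wronski-type) formula for the coefficients of the reciprocal of a power series, once we establish that the generating functions of $b_n$ and $d_n$ are reciprocal to one another up to the constant factor $4$. First I would record from \eqref{b(n)-GF} that $4G(x)\sqrt{2\te^{-x}-1}\,=1$ on $|x|<\ln2$, so it suffices to expand $\sqrt{2\te^{-x}-1}\,$ as a power series in the $d_n$. Combining Maclaurin's expansion~\eqref{sqrt-exp-wilf-ser-eq} in Theorem~\ref{sqrt-exp-wilf-ser-thm} with the definition~\eqref{d(n)-defined-eq} of $d_n$, the bracketed coefficient there equals $(-1)^nn!d_n$, and the prefactor $(-1)^{n+1}$ collapses the sign to give the clean identity
\begin{equation*}
\sqrt{2\te^{-x}-1}\,=-\sum_{n=0}^{+\infty}d_nx^n, \quad |x|<\ln2.
\end{equation*}
In particular $d_0=-1$ and $4b_0=1$, which are exactly the superdiagonal and diagonal entries visible in the two matrices.

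Multiplying the two series and comparing coefficients converts $4G(x)\sqrt{2\te^{-x}-1}\,=1$ into the convolution identity
\begin{equation*}
\sum_{k=0}^{n}b_kd_{n-k}=-\frac14\,\delta_{n,0}, \quad n\in\mathbb{N}_0,
\end{equation*}
equivalently the single reciprocity $\bigl(\sum_k4b_kx^k\bigr)\bigl(\sum_k(-d_k)x^k\bigr)=1$, in which both leading coefficients $4b_0$ and $-d_0$ equal $1$. This is precisely the hypothesis of the standard lemma expressing reciprocal coefficients as a lower Hessenberg determinant: if $\sum_{k=0}^{n}p_kq_{n-k}=\delta_{n,0}$ with $q_0\ne0$, then $p_n=\frac{(-1)^n}{q_0^{\,n+1}}\det\bigl(q_{i-j+1}\bigr)_{1\le i,j\le n}$, where $q_m=0$ for $m<0$. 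I would cite this as the Wronski-type determinant advertised in the abstract, or, to be self-contained, prove it either by viewing the convolution as a lower-triangular Toeplitz system in $p_0,\dotsc,p_n$ and applying Cramer's rule, or by induction on $n$ via the recurrence $p_n=-\frac{1}{q_0}\sum_{k=0}^{n-1}p_kq_{n-k}$.

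For \eqref{b(n)-d(n)-Det} I would apply the lemma with $q_k=-d_k$ and $p_k=4b_k$, so that $q_0=1$; extracting the factor $-1$ from the $n\times n$ matrix through $\det(-M)=(-1)^n\det M$ cancels the $(-1)^n$ of the formula and yields $4b_n=\det\bigl(d_{i-j+1}\bigr)_{n\times n}$, which is \eqref{b(n)-d(n)-Det}. For \eqref{d(n)-b(n)-Det} I would apply the same lemma with the roles reversed, $q_k=4b_k$ and $p_k=-d_k$ (again $q_0=1$); factoring $4$ out of each of the $n$ rows produces $4^n$, and the sign combines to $(-1)^{n-1}$, giving $d_n=(-1)^{n-1}4^n\det\bigl(b_{i-j+1}\bigr)_{n\times n}$, which is \eqref{d(n)-b(n)-Det}. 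A quick check at $n=1,2,3$, where the determinants return $b_1=b_2=\frac14$, $b_3=\frac{7}{24}$ and $d_1=1$, $d_2=0$, $d_3=\frac16$, would confirm the indexing of the displayed matrices.

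The conceptual content is routine once the reciprocity is in place; the only delicate point, and the step I expect to cost the most care, is the sign and scaling bookkeeping. One must track three interacting contributions at once: the $(-1)^n$ coming from the reciprocal-coefficient lemma, the $(-1)^n$ produced by pulling $-1$ out of the $d$-matrix (respectively the $4^n$ from the $b$-matrix), and the built-in sign $d_0=-1$ sitting on the superdiagonal. Getting these to collapse to the clean prefactors $\frac14$ and $(-1)^{n-1}4^n$, together with confirming that $\sqrt{2\te^{-x}-1}\,=-\sum_{n}d_nx^n$ carries the correct overall sign, is where an error would most easily slip in.
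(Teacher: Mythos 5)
Your proposal is correct and follows essentially the same route as the paper: both extract $\sqrt{2\te^{-x}-1}\,=-\sum_{n=0}^{+\infty}d_nx^n$ from~\eqref{sqrt-exp-wilf-ser-eq} and~\eqref{d(n)-defined-eq}, pair it with the generating function~\eqref{b(n)-GF}, and apply Wronski's reciprocal-series determinant formula (the paper's~\eqref{f(t)g(t)=1=determ}, your lemma for $\sum_{k=0}^{n}p_kq_{n-k}=\delta_{n,0}$) in both directions. The only difference is expository: the paper asserts the two relations follow ``immediately'' from Wronski's theorem, whereas you spell out the sign and scaling bookkeeping ($d_0=-1$, $\det(-M)=(-1)^n\det M$, the $4^n$ from factoring the rows), and those details check out.
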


\begin{proof}
The Wronski theorem~\cite[p.~17, Theorem~1.3]{Henrici-B-1974} reads that, if $\alpha_0\ne0$ and
\begin{equation*}
P(x)=\alpha_0+\alpha_1x+\alpha_2x^2+\dotsm
\end{equation*}
is a formal series, then the coefficients of the reciprocal series
\begin{equation*}
\frac{1}{P(x)}=\beta_0+\beta_1x+\beta_2x^2+\dotsm
\end{equation*}
are given by
\begin{equation}\label{f(t)g(t)=1=determ}
\beta_n=\frac{(-1)^n}{\alpha_0^{n+1}}
\begin{vmatrix}
\alpha_1 & \alpha_0 & 0 & 0 & \dotsm & 0& 0 & 0\\
\alpha_2 & \alpha_1 & \alpha_0 & 0 & \dotsm & 0 & 0& 0\\
\alpha_3 & \alpha_2 & \alpha_1 & \alpha_0 & \dotsm & 0 & 0 & 0\\
\vdots & \vdots & \vdots & \vdots & \ddots & \vdots &\vdots & \vdots\\
\alpha_{n-2} & \alpha_{n-3} & \alpha_{n-4} & \alpha_{n-5} & \dotsm & \alpha_1 & \alpha_0 & 0\\
\alpha_{n-1} & \alpha_{n-2} & \alpha_{n-3} & \alpha_{n-4} & \dotsm & \alpha_2 & \alpha_1 & \alpha_0\\
\alpha_{n} & \alpha_{n-1} & \alpha_{n-2} & \alpha_{n-3} & \dotsm & \alpha_3 & \alpha_2 & \alpha_1
\end{vmatrix}, \quad n\in\mathbb{N}.
\end{equation}
The formula~\eqref{f(t)g(t)=1=determ} can also be found in~\cite[p.~347]{Inselberg-JMAA-1978}, \cite[Lemma~2.1]{CDM-73022.tex}, \cite[Lemma~2.4]{2Closed-Bern-Polyn2.tex}, \cite[Lemma~2.3]{Schroder-Seq-3rd.tex}, and~\cite[Section~2]{Rutishauser-ZAMP-1956}.
\par
Using the Wronski theorem~\eqref{f(t)g(t)=1=determ}, from the series expansions~\eqref{sqrt-exp-wilf-ser-eq} and~\eqref{b(n)-GF} in Theorems~\ref{sqrt-exp-wilf-ser-thm} and~\ref{b(n)-c(n)-seq-GF-thm}, respectively, we arrive at the relations~\eqref{b(n)-d(n)-Det} and~\eqref{d(n)-b(n)-Det} immediately. The proof of Theorem~\ref{b(n)-det-thm} is thus complete.
\end{proof}

\begin{thm}\label{relation-thm-(n+1)d(n+1)}
For $n\in\mathbb{N}_0$, let
\begin{equation*}
e_n=-\frac{1}{(2n)!!}\sum_{k=0}^{n}\binom{n}{k}2^k \sum_{\ell=0}^{k} S(k,\ell) \frac{(2\ell-3)!!}{2^\ell}.
\end{equation*}
Then the Maclaurin power series expansion
\begin{equation}\label{recip-A014304-series-expan-eq}
\sqrt{\te^x(2-\te^x)}\,=\sum_{n=0}^{\infty}e_n x^n, \quad |x|<\ln2
\end{equation}
holds and the relations
\begin{equation}\label{relation-1-(n+1)d(n+1)}
d_{n+1}=\frac{(-1)^n}{n+1}
\begin{vmatrix}
0 & 1 & 0 & 0 & \dotsm & 0& 0 & 0\\
-\frac{1}{2} & 0 & 1 & 0 & \dotsm & 0 & 0& 0\\
-\frac{1}{2} & -\frac{1}{2} & 0 & 1 & \dotsm & 0 & 0 & 0\\
\vdots & \vdots & \vdots & \vdots & \ddots & \vdots &\vdots & \vdots\\
e_{n-2} & e_{n-3} & e_{n-4} & e_{n-5} & \dotsm & 0 & 1 & 0\\
e_{n-1} & e_{n-2} & e_{n-3} & e_{n-4} & \dotsm & -\frac{1}{2} & 0 & 1\\
e_{n} & e_{n-1} & e_{n-2} & e_{n-3} & \dotsm & -\frac{1}{2} & -\frac{1}{2} & 0
\end{vmatrix}
\end{equation}
and
\begin{equation}\label{relation-2-(n+1)d(n+1)}
e_n=(-1)^n
\begin{vmatrix}
0 & 1 & 0 & \dotsm & 0& 0 & 0\\
\frac12 & 0 & 1 & \dotsm & 0 & 0& 0\\
\frac12 & \frac12 & 0 & \dotsm & 0 & 0 & 0\\
\vdots & \vdots & \vdots & \ddots & \vdots &\vdots & \vdots\\
(n-1)d_{n-1} & (n-2)d_{n-2} & (n-3)d_{n-3} & \dotsm & 0 & 1 & 0\\
nd_{n} & (n-1)d_{n-1} & (n-2)d_{n-2} & \dotsm & \frac12 & 0 & 1\\
(n+1)d_{n+1} & nd_{n} & (n-1)d_{n-1} & \dotsm & \frac12 & \frac12 & 0
\end{vmatrix}
\end{equation}
ar valid for $n\in\mathbb{N}$, where $d_n$ for $n\in\mathbb{N}_0$ are defined by~\eqref{d(n)-defined-eq}.
\end{thm}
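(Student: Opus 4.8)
The plan is to establish the statement in two stages: first to secure the expansion \eqref{recip-A014304-series-expan-eq} together with the stated closed form of $e_n$, and then to deduce the two determinantal identities \eqref{relation-1-(n+1)d(n+1)} and \eqref{relation-2-(n+1)d(n+1)} from Wronski's theorem \eqref{f(t)g(t)=1=determ}, in exact parallel with the proof of Theorem~\ref{b(n)-det-thm}.

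For the first stage I would factor $\sqrt{\te^x(2-\te^x)}=\te^{x/2}\sqrt{2-\te^x}$ and expand the radical by the binomial series $\sqrt{2-\te^x}=\sqrt{1+(1-\te^x)}=\sum_{\ell=0}^{+\infty}\binom{1/2}{\ell}(1-\te^x)^\ell$, which is legitimate on a neighbourhood of $x=0$ where $|1-\te^x|<1$. Writing $(1-\te^x)^\ell=(-1)^\ell(\te^x-1)^\ell$ and inserting the Stirling generating function \eqref{2stirl-gen-f} in the form $(\te^x-1)^\ell=\ell!\sum_{k\ge\ell}S(k,\ell)\frac{x^k}{k!}$, then interchanging the $\ell$- and $k$-summations, I would arrive at $\sqrt{2-\te^x}=\sum_{k=0}^{+\infty}\bigl[\sum_{\ell=0}^{k}(-1)^\ell\ell!\binom{1/2}{\ell}S(k,\ell)\bigr]\frac{x^k}{k!}$. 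The key simplification is $\ell!\binom{1/2}{\ell}=\langle 1/2\rangle_\ell=(-1)^{\ell-1}\frac{(2\ell-3)!!}{2^\ell}$, which follows from \eqref{double-factorial-neg-odd-integer} and remains correct at $\ell=0$ via $(-3)!!=-1$; hence $(-1)^\ell\ell!\binom{1/2}{\ell}=-\frac{(2\ell-3)!!}{2^\ell}$ and $\sqrt{2-\te^x}=-\sum_{k=0}^{+\infty}\bigl[\sum_{\ell=0}^{k}S(k,\ell)\frac{(2\ell-3)!!}{2^\ell}\bigr]\frac{x^k}{k!}$. Multiplying by $\te^{x/2}=\sum_{j=0}^{+\infty}\frac{x^j}{2^j j!}$, reading off the coefficient of $x^n$ in the Cauchy product, and using $\frac{1}{2^{n-k}k!(n-k)!}=\frac{1}{(2n)!!}\binom{n}{k}2^k$ would produce exactly the asserted value of $e_n$, hence \eqref{recip-A014304-series-expan-eq}. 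The radius of convergence $\ln 2$ is the distance from the origin to the nearest zero $x=\ln 2$ of $\te^x(2-\te^x)$.

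For the second stage, the decisive observation is that \eqref{A014304-series-expan-eq} in Theorem~\ref{d(n)>0-thm} reads $\frac{1}{\sqrt{\te^x(2-\te^x)}}=\sum_{n=0}^{+\infty}(n+1)d_{n+1}x^n$, so that the two power series $\sum_{n\ge0}e_n x^n$ and $\sum_{n\ge0}(n+1)d_{n+1}x^n$ are reciprocals of one another, and both have constant term equal to $1$ because $e_0=1$ and $d_1=1$. I would then invoke Wronski's theorem \eqref{f(t)g(t)=1=determ} twice. Taking $\alpha_n=e_n$, so that $\alpha_0=1$ and the reciprocal coefficients are $\beta_n=(n+1)d_{n+1}$, gives $(n+1)d_{n+1}=(-1)^n\det(\cdots)$; dividing by $n+1$ yields \eqref{relation-1-(n+1)d(n+1)}. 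Symmetrically, taking $\alpha_n=(n+1)d_{n+1}$, so that $\alpha_0=d_1=1$ and $\beta_n=e_n$, gives \eqref{relation-2-(n+1)d(n+1)} directly. In both applications the prefactor $(-1)^n$ (rather than $(-1)^n/\alpha_0^{n+1}$ with a nontrivial denominator) and the particular matrix entries are forced by $\alpha_0=1$ and by the small values $e_1=2d_2=0$, $e_2=-\tfrac12$, $3d_3=\tfrac12$, $4d_4=\tfrac12$ already computed from the two closed forms.

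I expect the only genuinely non-mechanical step to be the closed-form evaluation of $e_n$ in the first stage: justifying the binomial expansion on the correct disc, interchanging the order of summation, and --- most delicately --- identifying $\frac{(2\ell-3)!!}{2^\ell}$ with $-(-1)^\ell\ell!\binom{1/2}{\ell}$ while tracking the sign and the $\ell=0$ boundary case correctly. Once \eqref{recip-A014304-series-expan-eq} is in hand, the two determinant formulas are immediate formal consequences of the reciprocity of the two generating functions and Wronski's theorem, requiring no further analysis beyond what already appears in Theorem~\ref{b(n)-det-thm}.
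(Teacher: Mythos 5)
Your proposal is correct, and its second stage---observing that \eqref{A014304-series-expan-eq} and \eqref{recip-A014304-series-expan-eq} exhibit $\sum_{n\ge0}(n+1)d_{n+1}x^n$ and $\sum_{n\ge0}e_nx^n$ as reciprocal power series with unit constant terms ($d_1=e_0=1$) and applying Wronski's theorem \eqref{f(t)g(t)=1=determ} in both directions---is exactly the paper's argument. Where you diverge is in establishing the closed form of $e_n$: the paper computes $\lim_{x\to0}\bigl[\te^{x/2}\sqrt{2-\te^x}\,\bigr]^{(n)}$ by the Leibniz rule combined with the Fa\`a di Bruno formula \eqref{Bruno-Bell-Polynomial} and the Bell-polynomial identities \eqref{Bell(n-k)} and \eqref{Bell-stirling}, whereas you expand $\sqrt{2-\te^x}=\sum_{\ell\ge0}\binom{1/2}{\ell}(1-\te^x)^\ell$ by the binomial series, insert the Stirling generating function \eqref{2stirl-gen-f} directly, interchange the summations, and finish with a Cauchy product against $\te^{x/2}$. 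The two computations are equivalent in substance---evaluating $\bell_{k,\ell}(-\te^x,\dotsc,-\te^x)$ through \eqref{Bell(n-k)} and \eqref{Bell-stirling} encodes the same Stirling-number data as \eqref{2stirl-gen-f}---but your route is more elementary, avoiding Bell polynomials altogether, at the modest price of justifying the interchange on $|1-\te^x|<1$ and handling the boundary case $\ell=0$ of the identity $(-1)^\ell\ell!\binom{1/2}{\ell}=-\frac{(2\ell-3)!!}{2^\ell}$, which you do correctly via $(-3)!!=-1$ from \eqref{double-factorial-neg-odd-integer}; the conversion $\frac{1}{2^{n-k}k!(n-k)!}=\frac{\binom{n}{k}2^k}{(2n)!!}$ then reproduces the stated $e_n$ exactly. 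Your numerical cross-checks ($e_1=0$, $e_2=-\frac12$, $2d_2=0$, $3d_3=4d_4=\frac12$) agree with the matrix entries displayed in \eqref{relation-1-(n+1)d(n+1)} and \eqref{relation-2-(n+1)d(n+1)}, and your identification of the radius $\ln2$ as the distance from the origin to the nearest branch point of $\te^x(2-\te^x)$ is at the same level of rigor as the paper's own treatment of \eqref{A014304-series-expan-eq}.
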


\begin{proof}
It is standard that
\begin{align*}
\bigl[\sqrt{\te^x(2-\te^x)}\,\bigr]^{(n)}
&=\bigl(\te^{x/2}\sqrt{2-\te^x}\,\bigr)^{(n)}\\
&=\sum_{k=0}^{n}\binom{n}{k}\bigl(\sqrt{2-\te^x}\,\bigr)^{(k)}\bigl(\te^{x/2}\bigr)^{(n-k)}\\
&\hskip-2em=\sum_{k=0}^{n}\binom{n}{k}\frac{\te^{x/2}}{2^{n-k}} \sum_{\ell=0}^{k} \biggl\langle\frac{1}{2}\biggr\rangle_{\ell} \bigl(2-\te^x\bigr)^{1/2-\ell} \bell_{k,\ell}\bigl(-\te^x,-\te^x,\dotsc,-\te^x\bigr)\\
&=-\frac{1}{2^n}\sum_{k=0}^{n}\binom{n}{k}2^k\te^{x/2} \sum_{\ell=0}^{k} \frac{(2\ell-3)!!}{2^\ell} \bigl(2-\te^x\bigr)^{1/2-\ell} \te^{\ell x}S(k,\ell)\\
&\to -\frac{1}{2^n}\sum_{k=0}^{n}\binom{n}{k}2^k \sum_{\ell=0}^{k} \frac{(2\ell-3)!!}{2^\ell} S(k,\ell)
\end{align*}
as $x\to0$, where we used the Fa\`a di Bruno formula~\eqref{Bruno-Bell-Polynomial} and two identities~\eqref{Bell(n-k)} and~\eqref{Bell-stirling}.
\par
Combining the series expansions~\eqref{A014304-series-expan-eq} and~\eqref{recip-A014304-series-expan-eq} with the Wronski theorem in~\eqref{f(t)g(t)=1=determ} leads to two relations~\eqref{relation-1-(n+1)d(n+1)} and~\eqref{relation-2-(n+1)d(n+1)} for $n\in\mathbb{N}$. The proof of Theorem~\ref{relation-thm-(n+1)d(n+1)} is complete.
\end{proof}

\begin{rem}
The generating function in~\eqref{recip-A014304-series-expan-eq} can be alternatively expanded as
\begin{align*}
\sqrt{\te^x(2-\te^x)}\,&=\sqrt{2}\,\te^{x/2}\sqrt{1-\frac{\te^x}{2}}\,\\
&=\sqrt{2}\,\sum_{j=0}^{\infty}\binom{1/2}{j}\biggl(-\frac{1}{2}\biggr)^j\te^{(j+1/2)x}\\
&=\sqrt{2}\, \sum_{j=0}^{\infty} \binom{1/2}{j} \biggl(-\frac{1}{2}\biggr)^j \sum_{n=0}^{\infty}\biggl(j+\frac{1}{2}\biggr)^n\frac{x^n}{n!}\\
&=\sqrt{2}\, \sum_{n=0}^{\infty}\Biggl[\sum_{j=0}^{\infty} \binom{1/2}{j} \biggl(-\frac{1}{2}\biggr)^j \biggl(j+\frac{1}{2}\biggr)^n\Biggr]\frac{x^n}{n!}\\
&=-\frac{1}{\pi} \sum_{n=0}^{\infty}\Biggl[\sum_{j=0}^{\infty} \frac{(j+1/2)^{n-1}}{j-1/2} \frac{2^{j+1/2}}{\binom{2j+1}{j+1/2}}\Biggr]\frac{x^n}{n!}.
\end{align*}
Accordingly, we obtain an infinite series representation
\begin{equation}\label{e(n)-series-repres}
e_n=-\frac{1}{\pi}\frac{1}{n!} \sum_{j=0}^{\infty} \frac{(j+1/2)^{n-1}}{j-1/2} \frac{2^{j+1/2}}{\binom{2j+1}{j+1/2}}, \quad n\in\mathbb{N}_0.
\end{equation}
\par
By a similar approach used in the proof of the limit~\eqref{b(n)-lim-infty}, as in the proof of Theorem~\ref{c(n)-b(n)-ratio-lim-thm}, we can confirm that $\lim_{n\to\infty}e_n=-\infty$.
\end{rem}

\begin{rem}
The sequence $n!e_n$ for $n\in\mathbb{N}_0$ is an integer sequence. Its first few values for $0\le n\le20$ are
\begin{gather*}
1,\quad 0,\quad -1,\quad -3,\quad -10,\quad -45,\quad -271,\quad -2058,\quad -18775,\\
-199335,\quad -2410516,\quad -32683563,\quad -490870315,\\
-8087188200,\quad -144994236661,\quad -2810079139143,\\
-58536519252130,\quad -1304198088413265,\quad -30946462816602331,\\
-779104979758256298,\quad -20742005411397108595.
\end{gather*}
We do not find the integer sequence $n!e_n$ for $n\in\mathbb{N}_0$ or its subsequences at the site \url{https://oeis.org/}.
\end{rem}

\section{Conclusions}

In this paper, with aid of the Fa\`a di Bruno formula~\eqref{Bruno-Bell-Polynomial}, be virtue of several identities from~\eqref{Bell(n-k)} to~\eqref{113-final-formula} for the Bell polynomials of the second kind, with help of two combinatorial identities~\eqref{sum-central-binom-ell-eq} and~\eqref{Stack-Quest-Answ-Eq}, and by means of the Wronski theorem~\eqref{f(t)g(t)=1=determ}, and in light of the (logarithmically) complete monotonicity of generating functions~\eqref{b(n)-GF} and~\eqref{c(n)-GF}, we established the Taylor power series expansions~\eqref{arctan-pi-4-ser-eq}, \eqref{arctan-frac-ser-mid}, \eqref{arctan-sqrt-expan-eq}, and those in Section~\ref{ser-arctanh-sec}, found out the Maclaurin power series expansions~\eqref{Wilf-F-Ser-Exapn} and~\eqref{Wilf-F-Ser-final} for the Wilf function~\eqref{Wilf-F-arctan}, and analyzed some properties, including two generating functions in Theorem~\ref{b(n)-c(n)-seq-GF-thm} and including the positivity, monotonicity, and logarithmic convexity in Theorems~\ref{b(n)-posit-ineq-thm} and~\ref{d(n)>0-thm}, of the coefficients $b_n$ and $c_n$ in the Maclaurin power series expansion~\eqref{Wilf-F-Ser-Split2Part} of the Wilf function~\eqref{Wilf-F-arctan}. These coefficients are closed-form expressions in~\eqref{b(n)-closed-rational} and~\eqref{c(n)-closed-rational}.
We also derived a closed-form formula~\eqref{Gauss-HF-Spec-Value} for a sequence of special values of the Gauss hypergeometric function in~\eqref{2F1=nnn-z=-1}, discovered a closed-form formula~\eqref{bell-polyn-sqrt-exp} in Theorem~\ref{sqrt-g(x)=0-bell-thm} for the Bell polynomials of the second kind in~\eqref{Bell-Poly-Stirling-invol}, presented several infinite series representations~\eqref{d(n)-series-repres} and~\eqref{e(n)-series-repres}, recovered an asymptotic rational approximation in Theorem~\ref{c(n)-b(n)-ratio-lim-thm} to the circular constant $\pi$, and connected several integer sequences by determinants in Theorems~\ref{b(n)-det-thm} and~\ref{relation-thm-(n+1)d(n+1)}.
\par
This paper is a revised version of the preprints at the web site \url{https://doi.org/10.48550/arXiv.2110.08576} and a companion of the articles~\cite{axioms-2962911.tex, FengQiF21(Axioms).tex}.

\section{Declarations}

\begin{description}
\item[Acknowledgements]
The author thanks
\begin{enumerate}
\item
Darij Grinberg (Drexel University, Germany) and other persons for their discussions and supplying the idea of the descending induction used in the proof of Lemma~\ref{sum-central-binom-lem-ell} at the site \url{https://mathoverflow.net/q/402822/};
\item
V\'aclav Kot\v{e}\v{s}ovec (Prague, Czech, retired, \url{http://www.kotesovec.cz/}) for his providing two figures plotted by the Maple about the asymptotic formulas of the sequences $b_n$ and $c_n$ via several e-mails in 3--4 September 2021;
\item
Hjalmar Rosengren (Chalmers University of Technology and University of Gothenburg, Sweden) and other persons for their discussions and an alternative proof of the infinite series representation~\eqref{b(n)-series-express} at the site \url{https://mathoverflow.net/q/403281/}.
\end{enumerate}

\item[Funding]
Not applicable.

\item[\bf Authors' Contributions]
All authors contributed equally to the manuscript and read and approved the final manuscript.

\item[Competing interests]
The authors declare that they have no conflict of competing interests.

\item[Availability of data and material]
Data sharing is not applicable to this article as no new data were created or analyzed in this study.

\item[Institutional Review Board Statement]
Not applicable.

\item[Informed Consent Statement]
Not applicable.

\item[Ethical Approval]
The conducted research is not related to either human or animal use.

\item[Use of AI Tools Declaration]
The authors declare they have not used Artificial Intelligence (AI) tools in the creation of this article.

\end{description}

\bibliographystyle{mmn}
\bibliography{Wilf-Ward-2010P}

\end{document}